\newcommand{\R}{{\mat R}}
\newcommand{\be}{\begin{eqnarray}}
\newcommand{\ben}{\begin{eqnarray*}}
\newcommand{\en}{\end{eqnarray}}
\newcommand{\enn}{\end{eqnarray*}}
\newcommand{\ov}{\overline}
\newcommand{\sig}{\sigma}
\newcommand{\mat}{\mathbb}
\definecolor{TJL}{rgb}{0,0,0}
\definecolor{lxl}{rgb}{0,0,0}
\crefname{hypothesis}{Hypothesis}{Hypotheses}
\definecolor{TJL}{rgb}{0,0,0}
\title{An inverse obstacle scattering problem with {\color{TJL}passive data} in the time domain\thanks{Submitted to the editors on December 31, 2024; revised May 31, 2025.}}
\author{Xiaoli Liu\thanks{School of Mathematical Sciences, Beihang University, Beijing 100191, China. The work of Xiaoli Liu was partially supported by
the NNSF of China (No. 12201023), the Fundamental Research Funds for the Central Universities (No. YWF-23-Q-1026 and YWF-22-T-204).(\email{xiaoli\_liu@buaa.edu.cn}).}
\and Shixu Meng\thanks{Department of Mathematics, Virginia Tech,
Blacksburg, VA 24061 USA
  (\email{sgl22@vt.edu}).}
\and Jialu Tian\thanks{Corresponding author. Academy of Mathematics and Systems Science, Chinese Academy of Sciences, Beijing, 100190, China and School of Mathematical Sciences, University of Chinese Academy of Sciences, Beijing 100049, China
(\email{tianjialu@amss.ac.cn}).}
\and Bo Zhang
 \thanks{Academy of Mathematics and Systems Science, Chinese Academy of Sciences, Beijing, 100190, China and School of Mathematical Sciences, University of Chinese Academy of Sciences, Beijing 100049, China. The work of Bo Zhang was supported by the NNSF of China (No. 12431016).(\email{b.zhang@amt.ac.cn})}}
\begin{document}

\maketitle
\begin{abstract}
This work considers a time domain inverse acoustic obstacle scattering problem due to {passive data}. Motivated by the Helmholtz-Kirchhoff identity in the frequency domain, we propose to relate the time domain measurement data {in passive imaging} to an approximate data set given by the subtraction of two scattered wave fields. We propose a time domain linear sampling method for the approximate data set and show how to tackle the measurement data {in passive imaging}. An imaging functional is built based on the linear sampling method, which reconstructs the support of the unknown scattering object using directly the time domain measurements. The functional framework is based on the Laplace transform, which relates the mapping properties of Laplace domain factorized operators to their counterparts in the time domain. Numerical examples are provided to illustrate the capability of the proposed method.
\end{abstract}

\begin{keywords}
time domain, inverse acoustic scattering, passive imaging, linear sampling method
\end{keywords}

\begin{MSCcodes}
35P25, 35R30, 35J05, 65M32
\end{MSCcodes}

\section{Introduction}\label{sec:1}
\numberwithin{equation}{section}
\setcounter{equation}{0}
Inverse scattering merits important applications in medical imaging, non-destructive testing, marine seismic imaging and many other areas.
{\color{lxl}In \textit{active imaging}, both sources and receivers are controlled, while in \textit{passive imaging}, only receivers are employed and the illumination comes from unknown and uncontrolled sources \cite{GarnierPapanicolaou2016}.} Passive imaging finds important applications in {\color{lxl}oceanography \cite{Siderius2010}, structural health monitoring \cite{Sabra2011}, exploration \cite{Hollis2018} and elastography \cite{Gallot2011}} and others.  

To formally introduce the time domain scattering problem, let $D$ be the bounded domain occupied by the obstacles in $\mathbb{R}^3$ and the exterior domain $\mathbb{R}^3\backslash \overline{D}$  is supposed to be connected. The boundary, denote by $\partial D$ is supposed to be of class $C^{2, \alpha}$. 
$B_R$ is a ball with a sufficiently large radius $R$ that contains $D$ and some bounded volume $B\subset \mathbb{R}^3\backslash \overline{D}$. Denote by  $d:=\text{dist}\{\overline{B}, \overline{D}\}$ as the distance between   $D$ and  $B$ and it is assumed that $d>0$.
Some {\color{lxl}unknown} sources are distributed randomly on the surface $\partial B_R$ and transmit a time signal $\chi(t)$. For a sketch of the geometry setting we refer to Figure \ref{geo}. 
\begin{figure}[htbp]  %
    \centering  %
    \includegraphics[width=0.4\textwidth]{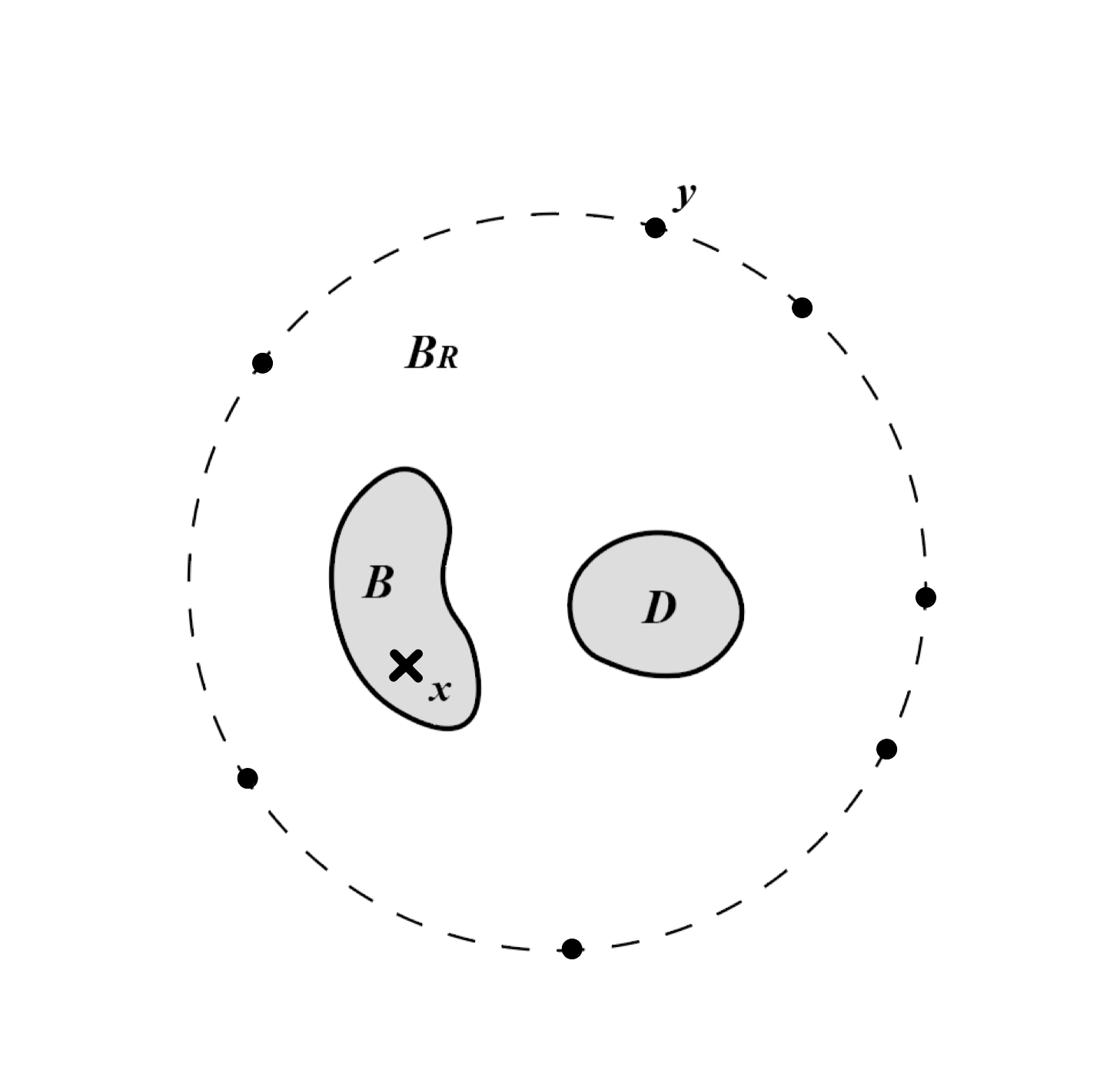}  %
    \caption{The sound-soft obstacle $D$ is illuminated by uncontrolled sources located randomly at $y\in\partial B_R$. The corresponding total fields are measured at $x \in B$.}  %
    \label{geo}  %
\end{figure}
Let the fundamental solution be given by
\[ 
\Phi(t,x;y):=\frac{\delta(t-|x-y|)}{4\pi|x-y|},\quad (t,x)\in\R\times\left\{\R^3\backslash\{y\}\right\}.
\]
{\color{lxl}Denote by $*$ the time convolution of two functions}
and consider an incident wave field 
\be\label{ui}
u^{\rm inc}(t,x;y):=\left[\chi(\cdot)*\Phi(\cdot,x;y)\right](t),\quad (t,x)\in\R\times\left\{\R^3\backslash\{y\}\right\},\quad y\in\partial B_R,
\en 
which vanishes for $t\leq 0$. 
Then the total field $u$ corresponding to an incident wave field $u^{\rm inc}(t,x;y)$ also vanishes for $t\leq 0$ and satisfies  
\be
\left\{
\begin{array}{ll}
\label{waveeq}\partial_t^2 u(t,x;y)-\Delta u(t,x;y) = \delta(x-y)\chi(t),  &(t,x)\in \mathbb{R}\times \left\{\mathbb{R}^3\backslash \overline{D}\right\},\\
u(t,x;y)=0,  &(t,x)\in{\mathbb{R}}\times \partial D.
\end{array}
\right.
\en
Equivalently, the scattered field $u^{\rm scat}=u-u^{\rm inc}$ vanishes for $t\leq 0$ and satisfies 
\be
\left\{
\begin{array}{ll}
\label{waveeq2}\partial_t^2 u^{\rm scat}(t,x;y)-\Delta u^{\rm scat}(t,x;y) = 0,  &(t,x)\in \mathbb{R}\times \left\{\mathbb{R}^3\backslash \overline{D}\right\},\\
u^{\rm scat}(t,x;y)=f(t,x),  &(t,x)\in{\mathbb{R}}\times \partial D,
\end{array}
\right.
\en
where the boundary data $f(t,x;y)$ is related to the incident waves by
\ben
f(t,x;y)=-u^{\rm inc}(t,x;y),\quad (t,x)\in{\mathbb{R}}\times \partial D.
\enn

The {\it forward scattering problem} is to find a solution $u$ satisfying (\ref{waveeq}) corresponding to an incident wave $u^{\rm inc}$ in \eqref{ui}. The {\it inverse scattering problem}   is to reconstruct $D$ from the knowledge of the {\color{lxl}\textit{passive data}}
\begin{equation} \label{data time domain total}
 \left\{u(t,x;y) : (t, x, y)\in \R\times B\times \partial B_R\right\},   
\end{equation} 
{\color{lxl}where $u(t,x;y)$ represents the total field measured at $x\in B$ corresponding to an incident point source $u^{\rm inc}(t,x;y)$ in \eqref{ui} located at $y \in \partial B_R$. Since we consider \textit{passive data} in this paper, meaning that the exact location $y$ of the incident point source in \eqref{data time domain total} is unknown. Here we still keep $y$ explicitly in the formulation for the convenience of subsequent theoretical analysis.

Our aim in this work is to deal with this inverse scattering problem with passive data \eqref{data time domain total}. Motivated by \cite{Garnier2023}, we will establish a Helmholtz-Kirchhoff-type identity in the time domain \eqref{eq:2} to relate the passive data \eqref{data time domain total} with some \textit{active data} 
\begin{equation} \label{data time domain scattered}
\left\{{u}_{\tilde{\chi}}^{\rm scat}(t,p;q)-{\Breve{u}}_{\tilde{\chi}}^{\rm scat}(t,p;q): (t, p, q)\in \R\times B\times B \right\},
\end{equation}
where ${u}_{\tilde{\chi}}^{\rm scat}(t,p;q)$ and ${\Breve{u}}_{\tilde{\chi}}^{\rm scat}(t,p;q)$ represent the scattered field measured at $p\in B$ corresponding to some incident point sources located at $q\in B$. We refer to \eqref{data time domain scattered} as \textit{active data} since here the location $q$ of the incident point source is known. In order to clearly distinguish between passive and active data in the subsequent analysis, we use $p$ and $q$ in the active data \eqref{data time domain scattered} to represent the receiver and source positions instead of $x$ and $y$ in the passive data \eqref{data time domain total}. From this perspective, it suffices to propose a time domain linear sampling method to solve the inverse scattering problem using with the active data \eqref{data time domain scattered} that can be deduced by the passive data \eqref{data time domain total}.}

{\color{lxl}The linear sampling method (LSM) was first established by Colton and Kirsch in \cite{ColtonKirsch1996} for solving inverse acoustic scattering problems in the frequency domain. Then it has been extended to electromagnetic cases in \cite{HaddarMonk2002}. After years of development, the linear sampling method in the frequency domain with active data has been widely studied and systematically expounded in several monographs \cite{CakoniColton2014,CakoniColtonHaddar2016,ColtonKress2019}. Among these, monograph \cite{ColtonKress2019} provides a thorough analysis on LSM for inverse scattering problems with Dirichlet boundary conditions in three dimensional case, while monograph \cite{CakoniColton2014} focuses on impedance boundary conditions in two dimensional case, presenting detailed theoretical analysis and numerical examples. As one of the sampling-type method, the LSM demonstrates notable advantages such as requiring little prior information and being independent of boundary conditions in both two and three dimensions. Some other sampling-type methods, such as the factorization method \cite{KirschGrinberg2007}, the generalized linear sampling method \cite{AudibertHaddar2014} and other approaches \cite{AudibertMeng2014,Ikehata1998,Potthast2000}, have also been developed with active data. Recently, the  work \cite{Garnier2023} studied the reconstruction of the obstacle using linear sampling method with random sources in the frequency domain, see \cite{Garnier2024} for random scatterers.

Solving inverse scattering problems with time domain active data has recently been widely studied and gained fruitful results. Chen et al. \cite{ChenHaddar2010} proposed the time domain linear sampling method (TDLSM) for solving inverse scattering problems by sound-soft bounded obstacles. The works \cite{HaddarLechleiter2014,CakoniMonkSelgas2021} extended this approach to other boundary conditions and inverse medium scattering problems. Factorization method has also been developed to time domain inverse scattering problems, see \cite{CakoniHaddarLechleiter2019} for Dirichlet boundary conditions and \cite{HaddarLiu2020} for Robin and Neumann boundary conditions. The most recent advancement by \cite{GuoLiWang2024} introduced a direct imaging method for bounded obstacle reconstruction using time domain measurements. Beyond these approaches, other sampling-type methods, including the point source method \cite{LukePotthast2006} and the enclosure method \cite{Ikehata2015}, have also been successfully adapted to time domain inverse scattering problems.}  

 In contrast to the above work in the time domain {\color{lxl}with active data},  this work aims to study a linear sampling method using time domain {\color{lxl}passive data that corresponding to unknown and} uncontrolled sources. The work \cite{Garnier2023} applied the Helmholtz-Kirchhoff identity to relate the measurements due to random sources to another set of measurements that is suitable for imaging, however difficulty still remains for the time domain measurements. Motivated by the work \cite{Garnier2023}, we relate the measurement data set  \eqref{data time domain total} due to {\color{lxl}unknown sources}  to an approximate data set  \eqref{data time domain scattered} given by the subtraction of two scattered wave fields. With the approximate data set  \eqref{data time domain scattered},  we use similar techniques as in \cite{CakoniHaddarLechleiter2019} to relate the mapping properties of the Laplace domain factorized operators to their counterparts in the time domain, enabling an imaging indicator using directly the time domain measurements.  One potential advantage of using time domain measurements is to avoid  certain difficulties in using frequency domain measurements at multiple frequencies \cite{guzina2010a}.

The remaining of the paper is as follows.
We first provide the functional framework in Section \ref{sec: functional framework}, including the Laplace transform and functional spaces in the time domain. To study the time domain linear sampling method, we analyze relevant Laplace domain operators in Section  \ref{sec:4}. We then relate the mapping properties of Laplace domain factorized operators to their counterparts in the time domain and develop a time domain linear sampling method  in Section \ref{sec:5}. Numerical examples are provided in Section \ref{sec:6} to test the proposed method, for both data set given by \eqref{data time domain scattered} and \eqref{data time domain total}.

\section{Functional framework} \label{sec: functional framework}
\numberwithin{equation}{section}
\setcounter{equation}{0}
To study the time domain scattering and inverse scattering, we denote by $\mathcal{D}( \R,X)=C^\infty_0(\R,X)$ the set of smooth and compactly supported $X$-valued functions for a Hilbert space $X$. Further, $\mathcal{D}'(\R;X)$ are $X$-valued distributions on the real line, and $\mathcal{S}'(\R;X)$ are the corresponding tempered distributions. For $\sigma\in\R$, we set
\begin{equation*}
    { \mathcal{L}'_\sigma(\R;X)}=\left\{f\in \mathcal{D}'(\R;X)~|~e^{-\sigma t}f(t)\in \mathcal{S}'(\R;X)\right\}.
\end{equation*}
For $f\in \mathcal{L}'_\sigma(\R;X)$, define the {Fourier-Laplace transform} with respect to the time variable as 
\begin{equation*}
    \mathcal{L}[f](s)=\hat{f}(s)=\int_{-\infty}^{\infty}e^{ist}f(t)dt,\quad s= k +i\sig.
\end{equation*}
We introduce the Hilbert space
\begin{equation*}
    H^p_{\sigma}(\R,X):=\left\{f\in\mathcal{L}'_\sigma(\R;X),\,\int_{-\infty+i\sigma}^{\infty+i\sigma}|s|^{2p}\Vert\mathcal{L}[f](s)\Vert^2_{X}ds< \infty\right\},\ p \in\mathbb{R},
\end{equation*}
endowed with the norm
\begin{equation*}
    \Vert f\Vert_{H^p_{\sigma}(\mathbb{R},\, X)}=\left(\int_{-\infty+i\sigma}^{\infty+i\sigma}|s|^{2p}\Vert\mathcal{L}[f](s)\Vert^2_{X}ds\right)^{1/2}
\end{equation*}
and the corresponding inner product. For $f \in H^p_{\sigma}(\R,X)$ and $g \in H^{-p}_{\sigma}(\R,X^*)$, we introduce  the following duality 
$$
{ \langle g, f\rangle_\sigma}= \int_{-\infty + i \sigma}^{\infty + i \sigma} \langle \mathcal{L}[g](s), \mathcal{L}[f](s) \rangle_{X^*,X} ds = \int_{-\infty}^\infty e^{-2\sigma t}\langle g(t), f(t)\rangle_{X^*,X} dt.
$$
More details can be found in \cite{Bamberger1986,Sayas2016}.

Formally applying the Laplace transform to \eqref{waveeq2}, the corresponding scattered field $\hat{u}^{\rm scat}(s,x;y)$ in the Laplace domain satisfy the following equation
\be
\left\{
\begin{array}{ll}
\label{helmeq2}\Delta  \hat{u}^{\rm scat}(s,x;y) +s^2  \hat{u}^{\rm scat}(s,x,y)=0,  &x\in \mathbb{R}^3\backslash\ov{D},\\
\hat{u}^{\rm scat}(s,x;y)=\hat{f}(s,x;y),  &x\in\partial D,
\end{array}
\right.
\en
subject to the  boundary condition
\ben
\hat{f}(s,x;y)=-\hat{u}^{\rm inc}(s,x;y)=-\hat{\chi}(s)\hat{\Phi}_s(x;y).
\enn
Here the fundamental solution in the Laplace domain is given by
$$
\hat\Phi_s(x;y):=\frac{e^{is|x-y|}}{4\pi|x-y|},\quad x\neq y.
$$
To have a physically meaningful solution, we require that $\hat{u}^{\rm scat}(s,x;y) \in H^1_s(\R^3\backslash\ov{D})$, where 
$$
H^1_s(\Omega) = \left\{ w:  \int_\Omega \left(|\nabla w|^2+|sw|^2\right)dx < \infty \right\}
$$
for any Lipschitz domain $\Omega$, the corresponding norm is given by
\ben
\|w\|_{H^1_s(\Omega)}:=\left(\int_\Omega\left(|\nabla w|^2+|sw|^2\right)dx\right)^{1/2}.
\enn
The forward scattering problem is well-posed; we state the following Lemma \cite[Proposition 1]{Bamberger1986}.
\begin{lemma}\label{thm4}
    Let $s=k+i\sigma, \sigma>\sigma_0>0$. For any $\hat{f}\in{H^{{1}/{2}}(\partial D)}$, the scattering problem \eqref{helmeq2} has a unique solution in $H^1_s(\R^3\backslash\ov{D})$. Moreover, 
    \begin{equation}\label{eq22}
        \Vert \hat{u}^{\rm scat}\Vert^2_{H^1_s(\R^3\backslash\ov{D})}\leq C\frac{1}{{\sigma}^2}\mathrm{max}\left(\frac{1}{\sigma_0}, 1\right)|s|^3\Vert \hat{f}\Vert^2_{H^{{1}/{2}}(\partial D)},
    \end{equation}
    where the constant $C$ only depends on $\partial D$. 
\end{lemma}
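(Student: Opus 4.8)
The plan is to follow the classical route for such a dissipative Helmholtz problem: recast it with $\mathrm{Re}(-is)>0$, lift the Dirichlet data by an $s$-adapted extension, reduce to a homogeneous-Dirichlet variational problem solved by Lax--Milgram with an $s$-explicit coercivity constant, and control the lifting by an $s$-explicit trace estimate. Concretely, put $\underline{s}:=-is$, so $\mathrm{Re}(\underline{s})=\sigma>0$ and \eqref{helmeq2} becomes $\Delta\hat u^{\rm scat}-\underline{s}^{2}\hat u^{\rm scat}=0$ in $\R^3\backslash\ov{D}$ with $\hat u^{\rm scat}=\hat f$ on $\partial D$; since $H^1_s(\R^3\backslash\ov{D})\subset L^2(\R^3\backslash\ov{D})$, the right behaviour at infinity is already built into the norm and no separate radiation condition need be imposed. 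Fix a compactly supported extension $E_s\hat f\in H^1_s(\R^3\backslash\ov{D})$ with $E_s\hat f=\hat f$ on $\partial D$ and seek $\hat u^{\rm scat}=v+E_s\hat f$ with $v$ in the Hilbert space $\widetilde H:=\{w\in H^1_s(\R^3\backslash\ov{D}):w=0\ \text{on}\ \partial D\}$. Multiplying against $\ov\varphi$, $\varphi\in\widetilde H$, and integrating by parts (boundary terms vanish: on $\partial D$ since $\varphi=0$, at infinity by the $L^2$ decay), the weak formulation reads
\[
b(v,\varphi)=-\,b(E_s\hat f,\varphi)\quad\text{for all }\varphi\in\widetilde H,\qquad b(w,\varphi):=\int_{\R^3\backslash\ov{D}}\na w\cdot\ov{\na\varphi}\,dx+\underline{s}^{2}\int_{\R^3\backslash\ov{D}}w\,\ov\varphi\,dx.
\]

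I would then check the hypotheses of the (complex) Lax--Milgram lemma on $(\widetilde H,\|\cdot\|_{H^1_s(\R^3\backslash\ov{D})})$. Boundedness: since $|\underline{s}|=|s|$, Cauchy--Schwarz gives $|b(w,\varphi)|\le\|\na w\|_{L^2}\|\na\varphi\|_{L^2}+|s|^{2}\|w\|_{L^2}\|\varphi\|_{L^2}\le 2\|w\|_{H^1_s}\|\varphi\|_{H^1_s}$. Coercivity, after rotating by $\ov{\underline{s}}$ and using that $\|\na v\|_{L^2}^2,\|v\|_{L^2}^2$ are real:
\[
\mathrm{Re}\big(\ov{\underline{s}}\,b(v,v)\big)=\mathrm{Re}(\underline{s})\,\|\na v\|_{L^2}^2+|\underline{s}|^{2}\mathrm{Re}(\underline{s})\,\|v\|_{L^2}^2=\sigma\big(\|\na v\|_{L^2}^2+|s|^{2}\|v\|_{L^2}^2\big)=\sigma\|v\|_{H^1_s}^2,
\]
hence $|b(v,v)|\ge|s|^{-1}\mathrm{Re}(\ov{\underline{s}}\,b(v,v))=(\sigma/|s|)\|v\|_{H^1_s}^2$. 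Lax--Milgram then yields a unique $v\in\widetilde H$ with $\|v\|_{H^1_s}\le(|s|/\sigma)\cdot 2\|E_s\hat f\|_{H^1_s}$ (the factor $2\|E_s\hat f\|_{H^1_s}$ bounding the norm of $\varphi\mapsto-b(E_s\hat f,\varphi)$), so \eqref{helmeq2} has a unique solution $\hat u^{\rm scat}\in H^1_s(\R^3\backslash\ov{D})$ (uniqueness: two solutions differ by a $w\in\widetilde H$ with $b(w,w)=0$, forcing $w=0$), and, using $|s|/\sigma\ge1$,
\[
\|\hat u^{\rm scat}\|_{H^1_s(\R^3\backslash\ov{D})}\le\|v\|_{H^1_s}+\|E_s\hat f\|_{H^1_s}\le\frac{C|s|}{\sigma}\,\|E_s\hat f\|_{H^1_s(\R^3\backslash\ov{D})}.
\]

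The only genuinely technical ingredient that remains is an $s$-explicit lifting estimate: $E_s$ must be chosen so that $\|E_s\hat f\|_{H^1_s(\R^3\backslash\ov{D})}^2\le C\max(1/\sigma_0,1)\,|s|\,\|\hat f\|_{H^{1/2}(\partial D)}^2$ with $C$ depending only on $\partial D$; squaring the previous display and inserting this bound yields precisely \eqref{eq22}. For $|s|\ge1$ the natural candidate is a smooth truncation of the solution $w$ of $\Delta w-|s|^{2}w=0$ in a bounded one-sided neighbourhood of $\partial D$ with $w=\hat f$ on $\partial D$: flattening $\partial D$ with a finite $C^{2,\alpha}$ atlas and a subordinate partition of unity and using the explicit Poisson kernel of $\Delta-|s|^{2}$ on the half-space, one gets, for a localized piece $g$ of $\hat f$ with tangential Fourier transform $\tilde g$,
\[
\|\na w\|_{L^2}^2+|s|^{2}\|w\|_{L^2}^2\le C\!\int(|\xi|^2+|s|^2)^{1/2}|\tilde g(\xi)|^2\,d\xi\le C|s|\!\int(1+|\xi|^2)^{1/2}|\tilde g(\xi)|^2\,d\xi,
\]
the second inequality being exactly where $|s|\ge1$ enters; summing over the atlas, the right-hand side is bounded by $C|s|\,\|\hat f\|_{H^{1/2}(\partial D)}^2$. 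The range $\sigma_0\le|s|<1$ is reduced to the $|s|=1$ lifting together with $|s|\ge\sigma_0$, which is what produces the factor $\max(1/\sigma_0,1)$. (Alternatively one may simply invoke this $s$-explicit lifting estimate from \cite{Bamberger1986,Sayas2016}.) Carrying out the localization while keeping every constant dependent only on $\partial D$ — through the atlas, the partition of unity, and the chart-by-chart Fourier estimates — is the bookkeeping I expect to be the main obstacle.
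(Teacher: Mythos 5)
The paper does not prove this lemma at all: it imports it verbatim from \cite[Proposition 1]{Bamberger1986} (see also \cite{Sayas2016}). Your argument is correct and is, in substance, the standard proof behind that citation: substitute $\underline{s}=-is$ so that $\mathrm{Re}\,\underline{s}=\sigma>0$, lift the Dirichlet data, and apply complex Lax--Milgram, with the coercivity constant $\sigma/|s|$ extracted exactly as you do by rotating $b(v,v)$ by $\ov{\underline{s}}$ and taking real parts (your identity $\mathrm{Re}(\ov{\underline{s}}\,b(v,v))=\sigma\|v\|_{H^1_s}^2$ checks out, and in fact Cauchy--Schwarz in $\R^2$ gives continuity constant $1$ rather than $2$, which changes nothing). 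You also correctly identify the one place where the precise powers in \eqref{eq22} are decided, namely the $s$-explicit lifting $\|E_s\hat f\|^2_{H^1_s}\le C\max(1/\sigma_0,1)\,|s|\,\|\hat f\|^2_{H^{1/2}(\partial D)}$: your half-space computation yields $\int(|\xi|^2+|s|^2)^{1/2}|\tilde g(\xi)|^2\,d\xi$ for the exact extension (it is in fact an equality there), the comparison $(|\xi|^2+|s|^2)^{1/2}\le|s|(1+|\xi|^2)^{1/2}$ is valid precisely for $|s|\ge1$, and the complementary regime $\sigma_0<|s|<1$ produces the $1/\sigma_0$ branch of the maximum; combined with $\|\hat u^{\rm scat}\|_{H^1_s}\le C(|s|/\sigma)\|E_s\hat f\|_{H^1_s}$ this reproduces \eqref{eq22} with the stated dependence of $C$ on $\partial D$ only. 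The remaining bookkeeping (curvature and cut-off commutator terms in the flattening, which are lower order in $|s|$) is exactly the part that the literature reference is normally invoked for, so either finishing that computation or citing \cite{Bamberger1986} closes the argument.
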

For later purposes, we discuss how to transform operators in the Laplace domain to the time domain following \cite{Bamberger1986,CakoniHaddarLechleiter2019}.
    Assume that $\mathrm{Im}(s)=\sigma> 0$ and $\hat{A}(s)$ is a function with values in the space of bounded operators between the Hilbert spaces $X$ and $Y$. Set
    \begin{equation*}
        a(t):=\frac{1}{2\pi}\int_{-\infty+i\sigma}^{\infty+i\sigma}\hat{A}(s)e^{-ist}ds,
    \end{equation*}
    and let $Ag=\int_{-\infty}^{\infty}a(t)g(\cdot-t)dt$ be the associated convolution operator. Let $p, r\in\mathbb{R}$. If 
    \begin{equation*}
        \Vert\hat{A}(s)\Vert_{X\rightarrow Y}\leq C|s|^r, 
    \end{equation*}
    then $A$ is a bounded operator from $H^p_{\sigma}(\mathbb{R},X)$ to $H^{p-r}_{\sigma}(\mathbb{R},Y)$.
\section{The modified  near-field operator in the Laplace domain}\label{sec:4}
\numberwithin{equation}{section}
\setcounter{equation}{0}
To study a linear sampling method for {the mathematical measurement data \eqref{data time domain scattered} in the time domain}, we first establish a functional framework in the Laplace domain with measurement data
\begin{equation*} %
\left\{\hat{u}^{\rm scat}(s,p;q)  - \hat{u}^{\rm scat}(-\overline{s},p;q) : (p, q)\in   B\times B \right\}
\end{equation*}
for a given $s=k+i\sigma$ with $\sigma>0$. The motivation to work with such a measurement data set will be seen clearly in the later context. For a clear presentation, in the following we establish the functional frameworks for $\hat{u}^{\rm scat}(s,p;q)$, $\hat{u}^{\rm scat}(-\overline{s},p;q)$, and $\hat{u}^{\rm scat}(s,p;q)  - \hat{u}^{\rm scat}(-\overline{s},p;q)$, respectively.
\subsection{The volume near-field operator $N$ in the Laplace domain}

Let $B \subset \mathbb{R}^3\backslash \overline{D}$ be a bounded domain and  $s=k+i\sigma$ be some complex number with $\sigma>0$, define the Hilbert spaces
\ben
W(B) =\left\{g \in L^2(B): \Delta g+s^2 g=0\quad\text{in}\quad B\right\}
\enn
and
\ben
W^*(B) =\left\{g \in L^2(B): \Delta g+\overline{s}^2 g=0\quad\text{in}\quad B\right\},
\enn
both of which are equipped with the $L^2(B)$-scalar product. We introduce the following volume potentials in the Laplace domain, motivated by the study in the frequency domain \cite{Garnier2023}. Given a density distribution $g \in W^*(B)$, consider an incident wave $v^{\rm inc}$ that can be written as a volume potential 
{ 
\be\label{vp}
v^{\rm inc}(x)=\int_B \hat\Phi_s(x;y)g(y)dy,\quad x\in \mathbb{R}^3.
\en}
It satisfies the inhomogeneous Helmholtz equation 
\ben
\Delta v^{\rm inc} +s^2 v^{\rm inc}=-g\chi_B\quad\text{in}\quad\mathbb{R}^3, 
\enn
where $\chi_B$ denotes the characteristic function of the domain $B$. From the linearity of the scattering problem with respect to the incident field, the solution $v^{\rm scat} \in H^1_s(\mathbb{R}^3\backslash D)$ to
\begin{equation*}
\left\{
\begin{aligned}
&\Delta v^{\rm scat}+s^2v^{\rm scat}=0 \quad\text{in}\quad\mathbb{R}^3\backslash \overline{D}\\
&v^{\rm inc}+v^{\rm scat}=0\quad\text{on}\quad\partial D\\
&\lim_{r\rightarrow \infty} r\left(\frac{\partial v^{\rm scat}}{\partial r}-is v^{\rm scat}\right)=0 
\end{aligned}
\right.
\end{equation*}
can be written as 
\begin{equation*}
v^{\rm scat}(x)=\int_B \hat{u}^{\rm scat}_*(s,x; y)g(y)dy, \ \ x\in \mathbb{R}^3\backslash \overline{D}.
\end{equation*}
Here $\hat{u}^{\rm scat}_*(s,x; y)\in H^1(\mathbb{R}^3\backslash \overline{D})$ is the scattered wave to \eqref{helmeq2} with boundary data $ -\hat{\Phi}_s(\cdot;y)$. Denote the corresponding total wave field by $\hat{u}_*(s,x; y) = \hat{u}^{\rm scat}_*(s,x; y) + \hat{\Phi}_s(x;y)$.

Now we can introduce the volume near-field operator $N$ in the Laplace domain, the idea is similar to the one in the Laplace domain that defined on a surface $\partial B$ (see \cite{ChenHaddar2010}) and the volume near-field operator in the Fourier domain (see \cite{Garnier2023}). The volume near-field operator  $N: W^*(B)\rightarrow W(B)$ is defined by
\begin{align}
 \label{N1}   (Ng)(x)=&\int_B \widehat{u}^{\rm scat}_*(s,x; y)g(y)dy,\quad x\in B.
\end{align}
The volume operator $V: W^*(B)\rightarrow H^{1/2}(\partial D)$ is defined by
\begin{align*} %
   (Vg)(x)=\int_B&\hat\Phi_s(x;y)g(y)dy,\quad x\in \partial D.
\end{align*}
The operator $A: H^{1/2}(\partial D)\rightarrow W(B)$ is define by
\be\label{opA}
 A \hat{f}=\hat{u}^{\rm scat}|_B,
\en
where $\hat{u}^{\rm scat}\in H^1_s(\R^3\backslash\ov{D})$ is the unique solution to \eqref{helmeq2} with boundary data $\hat{f}$. 

It follows by superposition principle that, for any $g\in W^*(B)$, the volume near-field operator $N$ in \eqref{N1} can be factorized as 
\begin{align}\label{N}
    N=-AV.
\end{align}
For later purposes, we can derive that, for any $\hat{f}\in H^{1/2}(\partial D)$,  
\be\label{eqA}
(A\hat{f})(x)=\int_{\partial D} \frac{\partial \hat{u}_*(s, y; x)}{\partial \nu(y)}\hat{f}(y)ds(y),\quad x\in B,
\en
since
\begin{align*}
    \quad&\hat{u}^{\rm scat}(x)\\
    =&\int_{\partial D}\left\{\hat{u}^{\rm scat}(y)\frac{\partial \hat\Phi_s(x;y)}{\partial \nu(y)}-\frac{\partial \hat{u}^{\rm scat}}{\partial \nu}(y)\hat\Phi_s(x;y)\right\}ds(y)\\
    =&\int_{\partial D}\left\{\hat{u}^{\rm scat}(y)\left(\frac{\partial \hat u_*(s,y; x)}{\partial \nu(y)}-\frac{\partial \hat u^{\rm scat}_*(s,y; x)}{\partial \nu(y)}\right)+\frac{\partial \hat{u}^{\rm scat}}{\partial \nu}(y)\hat u^{\rm scat}_*(s,y; x)\right\}ds(y)\\
    =&\int_{\partial D}\left\{\frac{\partial \hat{u}^{\rm scat}}{\partial \nu}(y)\hat u^{\rm scat}_*(s,y; x)-\hat{u}^{\rm scat}(y)\frac{\partial \hat{u}_*^{\rm scat}(s,y; x)}{\partial \nu(y)}\right\}ds(y)\\
    &+\int_{\partial D}\hat{u}^{\rm scat}(y)\frac{\partial \hat{u}_*(s,y; x)}{\partial \nu(y)}ds(y)\\   
    =&\int_{\partial D} \frac{\partial \hat{u}_*(s, y; x)}{\partial \nu(y)}\hat{f}(y)ds(y), \quad x\in \mathbb{R}^3\backslash \overline{D}.
\end{align*}
Now we are ready to prove the following lemmas.
\begin{lemma}\label{thm7}
The volume operator $V: W^*(B)\rightarrow H^{1/2}(\partial D)$ is injective and has dense range.
\end{lemma}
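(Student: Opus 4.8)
The plan is to prove injectivity and dense range of $V$ separately, using standard duality and unique continuation arguments adapted to the Laplace (rather than Helmholtz) setting.

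For \textbf{injectivity}, suppose $g \in W^*(B)$ satisfies $Vg = 0$ on $\partial D$. The function $v^{\rm inc}(x) = \int_B \hat\Phi_s(x;y)g(y)\,dy$ is defined on all of $\mathbb{R}^3$, solves $\Delta v^{\rm inc} + s^2 v^{\rm inc} = -g\chi_B$, and is the unique outgoing/decaying solution (since $\mathrm{Im}(s) = \sigma > 0$ forces exponential decay of $\hat\Phi_s$ at infinity, so $v^{\rm inc} \in H^1(\mathbb{R}^3 \setminus \overline{D})$ and even in $H^1$ near infinity globally). Since $Vg = v^{\rm inc}|_{\partial D} = 0$ and $v^{\rm inc}$ solves the homogeneous Helmholtz-type equation $\Delta v^{\rm inc} + s^2 v^{\rm inc} = 0$ in the connected exterior $\mathbb{R}^3 \setminus \overline{D}$ with zero Dirichlet data and the decay condition, uniqueness for the exterior Dirichlet problem (Lemma \ref{thm4} with $\hat f = 0$) gives $v^{\rm inc} \equiv 0$ in $\mathbb{R}^3 \setminus \overline{D}$. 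Next I would extend this vanishing into the region outside $\overline{B}$: since $v^{\rm inc}$ is real-analytic wherever it solves the homogeneous equation, i.e. on $\mathbb{R}^3 \setminus \overline{B}$ (which is connected and meets $\mathbb{R}^3 \setminus \overline{D}$ because $d = \mathrm{dist}\{\overline B,\overline D\} > 0$), unique continuation propagates $v^{\rm inc} \equiv 0$ to all of $\mathbb{R}^3 \setminus \overline{B}$. In particular $v^{\rm inc}$ and its normal derivative vanish on $\partial B$ from outside; matching Cauchy data across $\partial B$ (the single-layer-type volume potential is $C^1$ across $\partial B$) shows $v^{\rm inc}$ solves the homogeneous equation inside $B$ as well with zero Cauchy data on $\partial B$, hence $v^{\rm inc} \equiv 0$ in $B$. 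But then $0 = \Delta v^{\rm inc} + s^2 v^{\rm inc} = -g$ in $B$, so $g = 0$. This proves $V$ is injective.

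For \textbf{dense range}, I would argue by duality: the range of $V$ is dense in $H^{1/2}(\partial D)$ if and only if the adjoint $V^* : H^{-1/2}(\partial D) \to (W^*(B))^* \cong W(B)$ (using the $L^2(B)$ pairing to identify) is injective. Given $\varphi \in H^{-1/2}(\partial D)$ in the kernel of $V^*$, compute $V^*\varphi$ explicitly: for $g \in W^*(B)$, $\langle Vg, \varphi\rangle = \int_{\partial D}\int_B \hat\Phi_s(x;y)g(y)\,dy\,\varphi(x)\,ds(x) = \int_B \big(\int_{\partial D}\hat\Phi_s(x;y)\varphi(x)\,ds(x)\big)\overline{g(y)}$ (up to conjugation conventions), so $V^*\varphi$ is the single-layer potential $w(y) = \int_{\partial D}\hat\Phi_s(y;x)\varphi(x)\,ds(x)$ restricted to $B$. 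Now $w$ solves $\Delta w + s^2 w = 0$ in $\mathbb{R}^3 \setminus \partial D$, decays at infinity, and $V^*\varphi = 0$ means $w \equiv 0$ on $B$. Since $w$ is real-analytic on $\mathbb{R}^3 \setminus \overline D \supset \mathbb{R}^3 \setminus (\overline D \cup \partial D)$ which is connected and contains $B$, unique continuation gives $w \equiv 0$ in $\mathbb{R}^3 \setminus \overline D$; by continuity of the single-layer potential, $w = 0$ on $\partial D$ as well, so $w \equiv 0$ in the exterior. Interior uniqueness for $\Delta w + s^2 w = 0$ in $D$ with zero boundary data (again immediate since $s^2$ is not a Dirichlet eigenvalue of $-\Delta$ on $D$ — indeed $\mathrm{Im}(s^2) = 2k\sigma$, and for $k\neq0$ this is non-real, while the $k=0$ case gives a negative real $s^2 = -\sigma^2$, neither of which is an eigenvalue) yields $w \equiv 0$ in $D$, hence the jump of $\partial_\nu w$ across $\partial D$, which equals $\varphi$, vanishes. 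Therefore $\varphi = 0$ and $V^*$ is injective, so $V$ has dense range.

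The \textbf{main obstacle} I anticipate is handling the unique continuation and Cauchy-data matching rigorously across $\partial B$ in the injectivity argument — one must be careful that $v^{\rm inc}$, being a volume potential with $L^2$ density over $B$, is only $H^2_{\rm loc}$ near $\partial B$ rather than smooth, so the "zero Cauchy data implies zero solution inside" step needs the correct trace-theoretic formulation and the standard jump relations for Newtonian-type volume potentials rather than a naive analyticity argument. The propagation of zeros through the potentially non-smooth interface, and ensuring that $\partial B$ does not obstruct connectivity of the region where unique continuation applies, are the delicate points; everything else (exterior uniqueness, adjoint computation, analyticity in the open exterior) is routine and can be cited from Lemma \ref{thm4} and classical potential theory.
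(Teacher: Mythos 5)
Your dense-range argument is sound and is essentially the paper's: compute the adjoint as a single-layer potential on $\partial D$ evaluated in $B$, use unique continuation from $B$ into the exterior, continuity of the single-layer potential across $\partial D$, interior uniqueness in $D$, and the jump of the normal derivative to conclude $\varphi=0$. (Two cosmetic points: with the sesquilinear $L^2$ pairing the adjoint kernel is $\overline{\hat\Phi_s}=\hat\Phi_{-\overline{s}}$, so $V^*\varphi$ satisfies $\Delta w+\overline{s}^2w=0$ and lands in $W^*(B)$, not in $(W^*(B))^*\cong W(B)$; this does not change the argument.)

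The injectivity half, however, has a genuine gap, and the clearest symptom is that you never use the hypothesis $g\in W^*(B)$, i.e.\ $\Delta g+\overline{s}^2 g=0$ in $B$ --- without which the statement is false: for $\psi\in C_0^\infty(B)$ and $g:=-(\Delta\psi+s^2\psi)\neq 0$ one checks by integrating by parts against $\hat\Phi_s(x;\cdot)$ that $Vg=0$. Concretely, two steps fail. First, you invoke uniqueness for the \emph{exterior} Dirichlet problem in $\mathbb{R}^3\setminus\overline{D}$ to get $v^{\rm inc}\equiv 0$ there, but $B\subset\mathbb{R}^3\setminus\overline{D}$, so in that domain $v^{\rm inc}$ solves the \emph{inhomogeneous} equation $\Delta v^{\rm inc}+s^2v^{\rm inc}=-g\chi_B$; Lemma \ref{thm4} with $\hat f=0$ does not apply. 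The correct starting point is the \emph{interior} problem: $v^{\rm inc}$ solves the homogeneous equation in $D$ (since $\overline{B}\cap\overline{D}=\emptyset$) with zero Dirichlet data, and $s^2$ with $\sigma>0$ is not a Dirichlet eigenvalue, so $v^{\rm inc}=0$ in $D$; unique continuation in the connected set $\mathbb{R}^3\setminus\overline{B}$ then gives $v^{\rm inc}=0$ outside $\overline{B}$. Second, your claim that zero Cauchy data on $\partial B$ forces $v^{\rm inc}\equiv 0$ \emph{inside} $B$ is false, because the equation is inhomogeneous there: in the counterexample above $v^{\rm inc}=\psi$ has zero Cauchy data on $\partial B$ yet is nonzero in $B$. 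The paper closes the argument differently and this is the step you are missing: from $v^{\rm inc}=0$ outside $\overline{B}$ and $v^{\rm inc}\in H^2_{loc}(\mathbb{R}^3)$ one gets $v^{\rm inc}\in H^2_0(B)$, and then pairing $\Delta v^{\rm inc}+s^2v^{\rm inc}=-g$ with $\overline{g}$ and integrating by parts twice (no boundary terms) moves the operator onto $\overline{g}$, where $\Delta\overline{g}+s^2\overline{g}=0$ precisely because $g\in W^*(B)$; hence $-\|g\|_{L^2(B)}^2=0$.
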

{ 
\begin{proof}
To prove the injectivity, suppose that $Vg=0$ for $g\in W^*(B)$, consider the volume potential \eqref{vp}. 
It is a solution to the Helmholtz equation $\Delta v^{\rm inc}+s^2v^{\rm inc}=0$ in $D$ with boundary data $v^{\rm inc}|_{\partial D}=Vg$. Then $Vg=0$ implies $v^{\rm inc}=0 \ \mathrm{in} \ D.$ Furthermore, since $v^{\rm inc}$ is a solution to the Helmholtz equation in ${\mathbb{R}^3}\backslash \overline{B}$, we have $v^{\rm inc}=0$ in ${\mathbb{R}^3}\backslash \overline{B}$ (by the unique continuation principle). Note that $v^{\rm inc}\in  H^2_{loc}(\mathbb{R}^3)$, then
$v^{\rm inc}\in H^2_0(B)$. Taking the $L^2(B)$-scalar product of $\Delta v^{\rm inc}+s^2v^{\rm inc}=-g$ 
with $\overline{g}$, we obtain that
\begin{equation*}
-\Vert g\Vert^2_{L^2(B)} = \int_B\left(\Delta v^{\rm inc}(x)+s^2v^{\rm inc}(x)\right)\overline{g}(x)dx= \int_B v^{\rm inc}(x)  \left(\Delta  \overline{g}+s^2  \overline{g} \right) dx,
\end{equation*}
where one applies integration by parts two times and $v^{\rm inc}\in H^2_0(B)$. Since $\Delta \overline{g}+s^2\overline{g}=0$ in $B$, the right hand side vanishes, i.e., $g=0$ in $B$.
To show the denseness of the range of $V$, it is sufficient to show  that $V^*$ is injective where $V^*:H^{-1/2}(\partial D)\rightarrow W^*(B)$ is given by
\begin{align*}
   (V^*\phi)(x)=\int_{\partial D}\hat\Phi_{-\overline{s}}(x, y)\phi(y)ds(y),\quad x\in B.
\end{align*}
Suppose that $V^*$ is not injective, then there exits $\phi \in H^{-1/2}(\partial D)$ such that $V^*\phi$ vanishes in $B$.
Since ${v}^{\rm inc}_{\phi}(x):=\int_{\partial D}\hat\Phi_{-\overline{s}}(x, y)\phi(y)ds(y)$
satisfies $\Delta v^{\rm inc}_\phi+\overline{s}^2 v^{\rm inc}_\phi=0$ in $\mathbb{R}^3\backslash \overline{D}$ and $D$. By unique continuation principle and the jump relations, it follows that $\phi=0$ on $\partial D$. This completes the proof.
\end{proof}
\begin{lemma}\label{thm8}
The operator $A: H^{1/2}(\partial D)\rightarrow W(B)$ is injective and has dense range.
\end{lemma}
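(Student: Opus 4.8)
The plan is to prove injectivity and dense range separately, mirroring the structure of the proof of Lemma \ref{thm7} and exploiting the factorization $N=-AV$ together with the well-posedness statement in Lemma \ref{thm4}.

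For injectivity, suppose $A\hat f=0$ for some $\hat f\in H^{1/2}(\partial D)$. By definition \eqref{opA}, the unique solution $\hat u^{\rm scat}\in H^1_s(\R^3\backslash\ov D)$ of \eqref{helmeq2} with boundary data $\hat f$ vanishes on the open set $B$. Since $\hat u^{\rm scat}$ solves the Helmholtz equation $\Delta\hat u^{\rm scat}+s^2\hat u^{\rm scat}=0$ in the connected exterior domain $\R^3\backslash\ov D$, the unique continuation principle forces $\hat u^{\rm scat}\equiv 0$ in $\R^3\backslash\ov D$. Taking the trace on $\partial D$ gives $\hat f=\hat u^{\rm scat}|_{\partial D}=0$, so $A$ is injective. (One must check that $B$ has positive distance to $\partial D$ and lies in the exterior domain, which is part of the standing geometric assumptions, $d=\mathrm{dist}\{\ov B,\ov D\}>0$ and $B\subset\R^3\backslash\ov D$.)

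For the dense range, the natural route is to identify the adjoint $A^*:W(B)^*\to H^{-1/2}(\partial D)$ and show it is injective; since $W(B)$ carries the $L^2(B)$ inner product, $W(B)^*$ may be identified with $W^*(B)$, the solutions of the conjugate Helmholtz equation in $B$. Using the representation \eqref{eqA}, namely $(A\hat f)(x)=\int_{\partial D}\frac{\partial\hat u_*(s,y;x)}{\partial\nu(y)}\hat f(y)\,ds(y)$, one computes for $g\in W^*(B)$
\begin{equation*}
\langle A\hat f,g\rangle_{L^2(B)}=\int_{\partial D}\hat f(y)\,\overline{\Big(\int_B\overline{\tfrac{\partial\hat u_*(s,y;x)}{\partial\nu(y)}}\,\overline{g}(x)\,dx\Big)}\,ds(y),
\end{equation*}
so that $A^*g$ is (up to conjugation) the normal trace on $\partial D$ of the total field generated by the volume source $g$ on $B$; by the reciprocity/symmetry $\hat u_*(s,y;x)=\hat u_*(s,x;y)$ this is $\partial_\nu w_g|_{\partial D}$ where $w_g$ is the total field for the incident volume potential $v^{\rm inc}_g=\int_B\hat\Phi_{-\overline s}(\cdot\,;x)\overline{g(x)}\,dx$ (the conjugate fundamental solution appearing because of the adjoint). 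Suppose $A^*g=0$, i.e.\ this normal trace vanishes on $\partial D$. Then $w_g$ is a radiating solution of the Helmholtz equation outside $D$ with zero Neumann data; combined with the outgoing condition and Rellich's lemma one concludes $w_g\equiv0$ in $\R^3\backslash\ov D$, hence the scattered part equals $-v^{\rm inc}_g$ outside $D$, and by unique continuation $v^{\rm inc}_g$ is itself a global $H^2_{loc}$ solution supported away from $B$, so $v^{\rm inc}_g\in H^2_0(B)$; the same integration-by-parts argument as in Lemma \ref{thm7} then yields $g=0$. This gives denseness of the range of $A$.

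The main obstacle is the adjoint computation: getting the correct form of $A^*$ (the appearance of $\hat\Phi_{-\overline s}$ rather than $\hat\Phi_s$, and the precise role of the reciprocity identity $\hat u_*(s,x;y)=\hat u_*(s,y;x)$), and then legitimately concluding that a radiating solution with vanishing Neumann trace must be zero — this requires Rellich's lemma/the uniqueness of the exterior Neumann problem for the Laplace-domain Helmholtz equation with $\mathrm{Im}\,s>0$, where the standard argument gives even easier decay than in the real-frequency case. Everything else — the two unique-continuation steps and the $H^2_0(B)$ integration by parts — is a direct transcription of the proof of Lemma \ref{thm7}. An alternative that avoids computing $A^*$ explicitly is to use the factorization $N=-AV$: since $V$ has dense range (Lemma \ref{thm7}) into $H^{1/2}(\partial D)$, it would suffice to know that $A$ maps a dense set to a dense set, but this still requires an independent denseness argument for $A$, so the adjoint approach above seems cleanest.
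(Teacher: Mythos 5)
Your injectivity argument is exactly the paper's (unique continuation from $B$ to all of $\R^3\ba\ov D$, then take the trace). The dense-range half also follows the paper's skeleton --- test orthogonality against the range via \eqref{eqA}, deduce vanishing Cauchy data of a total field on $\partial D$, propagate by unique continuation, conclude the incident volume potential lies in $H^2_0(B)$, and finish by integrating by parts --- but two concrete errors would prevent it from closing as written. First, the conjugation bookkeeping in the adjoint is off: orthogonality of $g\in W(B)$ to the range of $A$ reads $\int_B \partial_{\nu(y)}\hat u_*(s,y;x)\,\ov{g(x)}\,dx=0$ on $\partial D$, so the relevant object is the total field at wavenumber $s$ generated by the incident volume potential $v^{\rm inc}(y)=\int_B\hat\Phi_s(y;x)\ov{g(x)}\,dx$ (outgoing kernel $\hat\Phi_s$, conjugated density $\ov g$), which is what the paper uses. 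Your $v^{\rm inc}_g=\int_B\hat\Phi_{-\ov s}(\cdot\,;x)\ov{g(x)}\,dx$ carries \emph{both} the conjugate kernel and the conjugate density (consistent with the double conjugation already present in your pairing formula), and it solves $\Delta v+\ov s^{\,2}v=-\ov g$ in $B$. In the final integration by parts one must pair this source equation with a function annihilated by the \emph{same} Helmholtz operator; since $g\in W(B)$ satisfies $\Delta g+s^2g=0$ rather than $\Delta g+\ov s^{\,2}g=0$, the boundary-free identity leaves a term $(\ov s^{\,2}-s^2)\int_B v\,g\,dx$ instead of $0$, and you cannot conclude $-\Vert g\Vert^2_{L^2(B)}=0$. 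With the paper's choice the operators match and the step closes.

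Second, the claim that $w_g$ is ``a radiating solution of the Helmholtz equation outside $D$ with zero Neumann data'' so that Rellich's lemma gives $w_g\equiv0$ in all of $\R^3\ba\ov D$ is not correct: the total field contains the incident volume potential, which satisfies an inhomogeneous equation in $B\subset\R^3\ba\ov D$ and is not radiating, so neither hypothesis of that uniqueness statement applies. The correct step (and the paper's) is Holmgren's theorem applied to the vanishing Cauchy pair $v=\partial_\nu v=0$ on $\partial D$ (the Dirichlet trace vanishes by construction of the total field), which yields $v\equiv0$ only on $\R^3\ba\{\ov D\cup\ov B\}$; one then reads off $v=\partial_\nu v=0$ on $\partial B$ to place $v^{\rm inc}$ in $H^2_0(B)$ before integrating by parts. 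Both issues are repairable, but as written the dense-range argument does not go through.
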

{ 
\begin{proof}
To show that $A$ is injective, let $\hat{f}\in H^{1/2}(\partial D)$ and suppose that $A\hat{f}=0$. Let $w$ be the unique radiating solution to \eqref{helmeq2} with boundary data $w|_{\partial D}=\hat{f}$.  $A\hat{f}=0$ together with unique continuation principle implies that  $w=0$ in $H^1(\mathbb{R}^3\backslash \overline{D})$. This shows that $\hat{f}=w|_{\partial D}=0$.

To show the dense range of $A$, let $g\in W(B)$ and suppose that 
\begin{equation*}
    \int_B(A\hat{f})(x) \overline{g(x)} dx=0,\ \forall \hat{f}\in H^{1/2}(\partial D).
\end{equation*}
Together with \eqref{eqA}, it follows that
\begin{equation*}
    \int_{\partial D}\int_B\frac{\partial \hat{u}_*(s, y; x)}{\partial \nu(y)}\overline{g(x)}dx\hat{f}(y)ds(y)=0,\ \forall \hat{f}\in H^{1/2}(\partial D).
\end{equation*}
Consider the volume potential
$v^{\rm inc}(y)=\int_B \hat{\Phi}_s(y; x)\overline{g(x)}dx$,
then the corresponding scattered field is $v^{\rm scat}(y)=\int_B \hat{u}^{\rm scat}_*(s,y; x)\overline{g(x)}dx$ and the corresponding total field is $v(y)=\int_B \hat{u}_*(s,y; x)\overline{g(x)}dx$.
The above equation implies that $\frac{\partial v}{\partial \nu}=0$, together with $v=0$ on $\partial D$,  one has from Holmgren's theorem that $v=0$ in $\mathbb{R^3}\backslash \{\overline{D}\cup\overline{B}\}$. This yields that $v=\frac{\partial v}{\partial \nu}=0$ on $\partial B$. Moreover, because $v^{\rm scat}$ satisfies the Helmholtz equation in $B$, it follows that 
$$\Delta v+s^2 v=\Delta v^{\rm inc}+s^2v^{\rm inc}=-\overline{g} \ \mathrm{in} \ B.$$
Multiplying both sides by $g$ yields that
\begin{equation*}
-\Vert g\Vert^2_{L^2(B)} = \int_B\left(\Delta v(x)+s^2v(x)\right)g(x)dx= \int_B v(x)  \left(\Delta  g+s^2  g \right) dx,
\end{equation*}
where one applies integration by parts two times and $v\in H^2_0(B)$. Since $\Delta g+s^2g=0$ in $B$, the right hand side vanishes, i.e., $g=0$ in $B$.
It follows that the operator $A$ has dense range.
\end{proof}
}
Now we have the following theorem.
\begin{theorem}\label{thm6}
    The volume near-field operator $N: W^*(B)\rightarrow W(B)$
is injective and has dense range.
\end{theorem}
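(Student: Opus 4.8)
The plan is to exploit the factorization $N = -AV$ established in \eqref{N}, together with the mapping properties of the two factors proved in Lemmas \ref{thm7} and \ref{thm8}. Since both $V$ and $A$ are bounded linear operators with the composition well-defined ($V : W^*(B) \to H^{1/2}(\partial D)$ and $A : H^{1/2}(\partial D) \to W(B)$), it suffices to combine injectivity and denseness of range of each factor in the standard way.

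First I would prove injectivity of $N$. Suppose $Ng = 0$ for some $g \in W^*(B)$. By \eqref{N}, $AVg = 0$, so $A(Vg) = 0$. Since $A$ is injective by Lemma \ref{thm8}, this forces $Vg = 0$ in $H^{1/2}(\partial D)$. Since $V$ is injective by Lemma \ref{thm7}, we conclude $g = 0$ in $W^*(B)$. Hence $N$ is injective.

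Next I would prove that $N$ has dense range. The natural route is to pass to adjoints: $N$ has dense range in $W(B)$ if and only if $N^* : W(B) \to W^*(B)$ is injective. From $N = -AV$ we get $N^* = -V^* A^*$. Suppose $N^* \psi = 0$ for $\psi \in W(B)$, i.e. $V^*(A^*\psi) = 0$. Now $A$ has dense range (Lemma \ref{thm8}), so $A^*$ is injective; and $V$ has dense range (Lemma \ref{thm7}), so $V^*$ is injective. Applying injectivity of $V^*$ gives $A^*\psi = 0$, and then injectivity of $A^*$ gives $\psi = 0$. Therefore $N^*$ is injective, and $N$ has dense range. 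Alternatively, one can argue directly: the range of $A$ is dense in $W(B)$ by Lemma \ref{thm8}, and since $V$ is injective with dense range, $V$ maps $W^*(B)$ onto a dense subspace of $H^{1/2}(\partial D)$, whose image under $A$ is then dense in $A(H^{1/2}(\partial D))$, itself dense in $W(B)$; so the range of $N = -AV$ is dense.

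The argument is essentially formal once Lemmas \ref{thm7} and \ref{thm8} are in hand, so there is no real obstacle here; the only point that requires a line of care is making sure the composition $AV$ acts between the stated spaces (so that the factorization \eqref{N} and the adjoint identity $N^* = -V^*A^*$ are legitimate) and that "dense range of $T$ $\iff$ $T^*$ injective" is applied in the correct Hilbert-space setting, all of which is immediate from the boundedness of $A$ and $V$ and the Hilbert space structure of $W(B)$, $W^*(B)$, and $H^{1/2}(\partial D)$.
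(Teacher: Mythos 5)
Your proposal is correct and follows the same route as the paper, which simply cites the factorization $N=-AV$ together with Lemmas \ref{thm7} and \ref{thm8} without spelling out the (standard) composition argument that you write in full. The only point worth a word of care, which you already note, is that the denseness of the range of the composition uses the boundedness of $A$ (so that $A$ maps a dense subset of $H^{1/2}(\partial D)$ to a set dense in the closure of the range of $A$), and this is available.
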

\begin{proof}
    This follows from the factorization \eqref{N}, Lemma \ref{thm7}, and Lemma \ref{thm8}.
\end{proof}

Another key ingredient in the analysis of the LSM is the characterization of the range of $A$. 
\begin{lemma}\label{thm10}
    $\hat{\Phi}_s|_B(\cdot;z)\in \mathrm{range}(A)$ if and only if $z\in D$.    
\end{lemma}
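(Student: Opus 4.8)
The plan is to prove the two directions separately, following the classical linear sampling method argument adapted to the volume/near-field setting. For the ``if'' direction, suppose $z\in D$. I would consider the exterior Dirichlet scattering problem \eqref{helmeq2} with boundary data $\hat{f}_z := -\hat{\Phi}_s(\cdot;z)|_{\partial D}$; this is a well-defined element of $H^{1/2}(\partial D)$ since $z\in D$ keeps the singularity away from $\partial D$. By Lemma \ref{thm4} there is a unique radiating solution $\hat{u}^{\rm scat}_z\in H^1_s(\R^3\backslash\ov D)$. The point is that the function which equals $\hat{\Phi}_s(\cdot;z)$ outside $D$... wait, more precisely: the radiating function $w$ defined by $w = \hat{u}^{\rm scat}_z$ has the property that $w + \hat{\Phi}_s(\cdot;z) = 0$ on $\partial D$, i.e.\ $\hat{u}^{\rm scat}_z = -\hat{\Phi}_s(\cdot;z)$ on $\partial D$. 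But $-\hat{\Phi}_s(\cdot;z)$, restricted to $\R^3\backslash\ov D$, is itself a radiating solution of the Helmholtz equation $\Delta v + s^2 v = 0$ there (since $z\in D$, no singularity in the exterior), with the same boundary trace. By uniqueness in Lemma \ref{thm4}, $\hat{u}^{\rm scat}_z = -\hat{\Phi}_s(\cdot;z)$ in all of $\R^3\backslash\ov D$, hence in particular on $B$. Therefore $A\hat{f}_z = \hat{u}^{\rm scat}_z|_B = -\hat{\Phi}_s(\cdot;z)|_B$, so $\hat{\Phi}_s(\cdot;z)|_B = A(-\hat{f}_z)\in\mathrm{range}(A)$.

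For the ``only if'' direction, argue by contradiction: suppose $z\notin D$ but $\hat{\Phi}_s(\cdot;z)|_B = A\hat f$ for some $\hat f\in H^{1/2}(\partial D)$. Let $w\in H^1_s(\R^3\backslash\ov D)$ be the radiating solution of \eqref{helmeq2} with boundary data $\hat f$, so $w|_B = \hat{\Phi}_s(\cdot;z)|_B$. Both $w$ and $\hat{\Phi}_s(\cdot;z)$ solve the Helmholtz equation $\Delta v + s^2 v = 0$ in the connected open set $\mathcal{O} := (\R^3\backslash\ov D)\backslash\{z\}$ minus nothing problematic — we need the domain of analytic/unique continuation to connect $B$ to a neighborhood of the singularity or of $\partial D$. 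By the unique continuation principle, since $w = \hat{\Phi}_s(\cdot;z)$ on the open set $B$, they agree on the connected component of $\mathcal{O}$ containing $B$. If $z\notin\ov D$, then letting $x\to z$ within this component, $w(x)$ stays bounded (as $w\in H^1_{loc}$ away from $\partial D$ and is a classical Helmholtz solution near $z$), while $\hat{\Phi}_s(x;z)\to\infty$, a contradiction. If $z\in\partial D$, one derives a contradiction instead by tracking the behavior as $x\to z$ along $\partial D$ from the exterior: $w$ has an $H^{1/2}(\partial D)$ (hence locally $L^2$) trace near $z$, whereas $\hat{\Phi}_s(\cdot;z)$ is not locally square-integrable on $\partial D$ near $z$ in $\R^3$, contradiction. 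Either way $z\notin D$ is impossible.

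The main obstacle, and the step requiring the most care, is the unique continuation argument in the ``only if'' direction: one must ensure the region where both functions are Helmholtz solutions is connected and reaches from $B$ to the singular point $z$ (or to $\partial D$), given that $B\subset\R^3\backslash\ov D$ and $\R^3\backslash\ov D$ is assumed connected. Since $w$ is a radiating solution in $\R^3\backslash\ov D$ and $\hat\Phi_s(\cdot;z)$ is smooth in $\R^3\backslash\{z\}$, both are real-analytic solutions of the Helmholtz equation on $(\R^3\backslash\ov D)\backslash\{z\}$, which is connected (removing a point from a connected open subset of $\R^3$ leaves it connected); hence equality on the open subset $B$ propagates to all of it by analyticity. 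The boundedness/integrability contradiction near $z$ or near $\partial D$ then closes the argument. A minor additional point is to invoke the $H^1_{loc}$ interior regularity of the radiating solution $w$ away from $\partial D$ to justify that $w$ is in fact classical (smooth) near an interior point $z\in\R^3\backslash\ov D$, so that its boundedness there is legitimate.
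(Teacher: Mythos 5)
Your proposal is correct and follows essentially the same route as the paper: for $z\in D$ one uses uniqueness of the radiating exterior solution to identify $A$ applied to the trace of $\hat\Phi_s(\cdot;z)$ with $\hat\Phi_s|_B(\cdot;z)$ (your extra minus sign is just bookkeeping), and for $z\notin D$ one uses unique continuation in $(\R^3\backslash\ov D)\backslash\{z\}$ followed by a regularity contradiction at $z$. The only cosmetic difference is in the case $z\in\partial D$, where you contradict local square-integrability of the trace while the paper contradicts $\hat\Phi_s(\cdot;z)\in H^1(D)$; both are valid.
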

\begin{proof}
    If $z\in D$, then $\hat{\Phi}_s|_{\partial D}(\cdot; z)\in H^{1/2}(\partial D)$ and $\hat{\Phi}_s|_B(\cdot;z)=A\hat{\Phi}_s|_{\partial D}(\cdot; z).$ If $z\notin D,$ assume that there exists $\hat{f}\in H^{1/2}(\partial D)$ such that $A\hat{f}=\hat{\Phi}_s|_B(\cdot;z).$ Therefore, by the unique continuation principle in $\mathbb{R}^3\backslash\left(\overline{D}\cup\{z\}\right)$, the solution $u\in H^1(\mathbb{R}^3\backslash D)$ to the exterior Dirichlet problem with $u|_{\partial D}=\hat{f}$ must coincide with $\hat{\Phi}_s(\cdot;z)$ in $\mathbb{R}^3\backslash\left(\overline{D}\cup\{z\}\right)$. If $z\in \mathbb{R}^3\backslash \overline{D},$ this contradicts the regularity of $u$. If $z\in\partial D,$ from the boundary condition one has that $\hat{\Phi}_s|_{\partial D}(\cdot;z)=\hat{f}\in H^{1/2}(\partial D),$ which is a contradiction to $\hat{\Phi}_s(\cdot; z)\notin H^1(D)$ when $z\in \partial D$. 
\end{proof}
\subsection{The modified volume near-field operator $N_\sigma$ in the Laplace domain}

In this subsection, we introduce the modified near-field operator $N_\sigma$ and study its properties. The motivation to introduce such a modified operator is in similar spirit to \cite{CakoniHaddarLechleiter2019}, which allows to develop the mathematical theory of certain Laplace domain and time domain operators; moreover setting $\sigma=0$ directly allows to implement the numerical algorithm. To begin with, for $s= k + i\sigma$ with sufficiently small $\sigma>0$, we introduce the following modified volume operator $V_{\sigma}: W(B)\rightarrow H^{1/2}(\partial D)$ by  
\begin{align}
\label{vsigma}   (V_{\sigma}\,g)(x)=\int_B&\hat\Phi_{s-2i\sigma}(x;y)g(y)dy,\quad x\in \partial D.
\end{align}
Note that $s - 2i \sigma = \overline{s}$. We have the following properties for $V_\sigma$.
\begin{lemma}\label{thm11}
    The modified volume operator $V_{\sigma}: W(B)\rightarrow H^{1/2}(\partial D)$ is injective and has dense range.
\end{lemma}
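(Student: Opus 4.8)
The plan is to mimic the structure of the proof of Lemma \ref{thm7} exactly, since $V_\sigma$ differs from $V$ only in two respects: the domain is now $W(B)$ (the space annihilated by $\Delta + s^2$) rather than $W^*(B)$, and the kernel is $\hat\Phi_{\overline{s}}$ rather than $\hat\Phi_s$. Note that $\hat\Phi_{\overline s}(x;y) = \overline{\hat\Phi_s(x;y)}$ when $x,y$ are real, so $\hat\Phi_{\overline s}(x;y) = \hat\Phi_{-\overline{(-\overline s)}}(x;y)$, which is precisely the kind of kernel that appeared for the adjoint $V^*$. The key observation to record up front is that if $g \in W(B)$ then $\overline g$ satisfies $\Delta \overline g + \overline{s}^2 \overline g = 0$ in $B$, i.e. $\overline g \in W^*(B)$; this conjugation trick is what lets the integration-by-parts argument go through with the new wavenumber.

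For \emph{injectivity}: suppose $V_\sigma g = 0$ for some $g \in W(B)$. Consider the volume potential $v^{\rm inc}(x) = \int_B \hat\Phi_{\overline s}(x;y)\,g(y)\,dy$, which satisfies $\Delta v^{\rm inc} + \overline s^{\,2} v^{\rm inc} = -g\chi_B$ in $\mathbb{R}^3$. It is a solution of the homogeneous Helmholtz equation (wavenumber $\overline s$) in $D$ with boundary value $V_\sigma g = 0$ on $\partial D$, hence $v^{\rm inc} = 0$ in $D$ by uniqueness for the interior Dirichlet problem; it also solves the homogeneous equation in $\mathbb{R}^3\setminus\overline B$, so by unique continuation $v^{\rm inc} = 0$ there as well. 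Since $v^{\rm inc} \in H^2_{\rm loc}(\mathbb{R}^3)$, we conclude $v^{\rm inc} \in H^2_0(B)$. Now pair $\Delta v^{\rm inc} + \overline s^{\,2} v^{\rm inc} = -g$ in $L^2(B)$ against $\overline g$: integrating by parts twice (legitimate because $v^{\rm inc} \in H^2_0(B)$) and using $\Delta \overline g + s^2 \overline g = 0$ — here I need $\overline{\overline s}^{\,2} = s^2$, which is exactly the statement that $g\in W(B)$ — gives $-\|g\|_{L^2(B)}^2 = \int_B v^{\rm inc}(\Delta \overline g + \overline{s}^{\,2}\overline g)\,dx$. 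Wait: the adjoint operator on $\overline g$ produces $\Delta \overline g + \overline s^{\,2}\overline g$, not $\Delta\overline g + s^2 \overline g$; but $\overline g \in W^*(B)$ means $\Delta\overline g + \overline s^{\,2}\overline g = 0$, so the right side vanishes and $g = 0$.

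For \emph{dense range}: it suffices to show the adjoint $V_\sigma^* : H^{-1/2}(\partial D) \to W(B)$ is injective. Computing the adjoint with respect to the $L^2(B)$ and $L^2(\partial D)$ pairings, $(V_\sigma^*\phi)(x) = \int_{\partial D} \hat\Phi_{\overline{\overline s}}(x;y)\,\phi(y)\,ds(y) = \int_{\partial D}\hat\Phi_{s}(x;y)\,\phi(y)\,ds(y)$ for $x \in B$ (the conjugate of the kernel $\hat\Phi_{\overline s}$ being $\hat\Phi_{s}$ on real arguments). If $V_\sigma^*\phi = 0$ in $B$, then the single-layer potential $v_\phi(x) := \int_{\partial D}\hat\Phi_s(x;y)\phi(y)\,ds(y)$ solves $\Delta v_\phi + s^2 v_\phi = 0$ in both $D$ and $\mathbb{R}^3\setminus\overline D$ and vanishes on the open set $B \subset \mathbb{R}^3\setminus\overline D$; by unique continuation it vanishes in all of $\mathbb{R}^3\setminus\overline D$, and since it is a radiating solution it vanishes identically outside $\overline D$ — in particular the exterior Cauchy data of $v_\phi$ on $\partial D$ are zero. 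The jump relations for the single-layer potential across $\partial D$ then force $\phi = 0$ on $\partial D$. This is the step to be most careful about — one must check that $\phi \in H^{-1/2}(\partial D)$ is exactly the jump in the normal derivative (up to sign), so that the vanishing exterior normal derivative combined with the continuity of the single-layer potential indeed yields $\phi = 0$ — but this is standard, parallel to the argument already used for $V^*$ in the proof of Lemma \ref{thm7}. The only genuinely new bookkeeping is tracking the wavenumbers $s$, $\overline s$, and $-\overline s$ correctly through the conjugations, so I would state the conjugation identity $\overline{\hat\Phi_s(x;y)} = \hat\Phi_{\overline s}(x;y)$ for real $x \neq y$ once at the start and refer back to it.
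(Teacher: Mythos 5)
Your overall strategy coincides with the paper's: injectivity via the volume potential with kernel $\hat\Phi_{\overline s}$, interior uniqueness, unique continuation, and the integration-by-parts pairing against $\overline g\in W^*(B)$; dense range via injectivity of the adjoint single-layer potential using unique continuation and the jump relations. The injectivity half is essentially identical to the paper's argument and is correct.

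There is, however, a concrete error in your conjugation bookkeeping. Since $\hat\Phi_w(x;y)=e^{iw|x-y|}/(4\pi|x-y|)$, conjugation gives $\overline{\hat\Phi_w(x;y)}=e^{-i\overline w|x-y|}/(4\pi|x-y|)=\hat\Phi_{-\overline w}(x;y)$ — the $i$ in the exponent also gets conjugated. So your ``identity'' $\overline{\hat\Phi_s}=\hat\Phi_{\overline s}$ is wrong (it should read $\overline{\hat\Phi_s}=\hat\Phi_{-\overline s}$, consistent with the paper's definitions of $\overline V$ in \eqref{VV} and of $V^*$ in Lemma \ref{thm7}), and consequently the adjoint of $V_\sigma$ has kernel $\overline{\hat\Phi_{\overline s}}=\hat\Phi_{-s}$, not $\hat\Phi_{s}$ as you wrote. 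For this particular lemma the slip is harmless: $(-s)^2=s^2$, so the single-layer potential with kernel $\hat\Phi_{-s}$ satisfies the same Helmholtz equation in $D$ and in $\mathbb{R}^3\setminus\overline D$ and obeys the same jump relations, and your unique-continuation argument goes through verbatim (your aside that the potential ``is a radiating solution'' is both unnecessary and, with the correct kernel, false — $\hat\Phi_{-s}$ is the non-radiating choice). But the sign does matter elsewhere in this framework (e.g., the radiating versus non-radiating dichotomy in the proof of Theorem \ref{thm14}), so you should fix the identity rather than rely on it.
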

\begin{proof}
We start with the injectivity. Let $g\in W(B)$ and consider the ``incident wave'' 
\begin{equation*}
    v^{\rm inc}(x)=\int_B\hat\Phi_{\overline{s}}(x;y)g(y)dy, \quad x\in \mathbb{R}^3.
\end{equation*}
It is a solution to the Helmholtz equation $\Delta v+\overline{s}^2v=0$ in $D$ with boundary data $v^{\rm inc}|_{\partial D}=V_{\sigma}g$. Suppose that $V_{\sigma}g=0$ on $\partial D$. This implies that $v^{\rm inc}=0$ in $D$ since $\ov{s}^2$ is not a Dirichlet eigenvalue of $-\Delta$ in $D$. Furthermore, since $v^{\rm inc}$ is a solution to the Helmholtz equation in ${\mathbb{R}^3}\backslash \overline{B}$, we have $v^{\rm inc}=0$ in ${\mathbb{R}^3}\backslash \overline{B}$ (by the unique continuation principle).  Note that $v^{\rm inc}\in  H^2_{loc}(\mathbb{R}^3)$, then
$v^{\rm inc}\in H^2_0(B)$. Taking the $L^2(B)$-scalar product of $\Delta v^{\rm inc}+\ov{s}^2v^{\rm inc}=-g$ 
with $g\in W(B)$, we obtain that
\begin{equation*}
-\Vert g\Vert^2_{L^2(B)} = \int_B\left(\Delta v^{\rm inc}(x)+\ov{s}^2v^{\rm inc}(x)\right)\overline{g}(x)dx= \int_B v^{\rm inc}(x)  \left(\Delta  \overline{g}+\ov{s}^2  \overline{g} \right) dx,
\end{equation*}
where one applies integration by parts two times and $v^{\rm inc}\in H^2_0(B)$. Since $\Delta \overline{g}+\overline{s}^2\overline{g}=0$ in $B$, the right hand side vanishes, i.e., $g=0$ in $B$.

We now prove the dense range by showing that the adjoint operator
\begin{align*}
V_{\sigma}^*&: H^{-1/2}(\partial D) \rightarrow W(B)\\ 
(V_{\sigma}^*\phi)(x)=&\int_{\partial D}\hat{\Phi}_{-s}(x; y)\phi(y)ds(y),\quad x\in B,
\end{align*}
is injective. Let $\phi\in H^{-1/2}(\partial D)$ and consider $v_1^{\rm inc}(x)=\int_{\partial D}\hat\Phi_{-s}(x; y)\phi(y)ds(y)$,
which satisfies the Helmholtz equation $\Delta v_1^{\rm inc}+s^2v_1^{\rm inc}=0$ in both $\mathbb{R}^3\backslash \overline{D}$ and $D$; moreover  $v^{\rm inc}_1|_B=V_{\sigma}^*\phi$. Suppose that $V_{\sigma}^*\phi=0$, i.e., $v^{\rm inc}_1|_B=0$. By the unique continuation principle, one has $v_1^{\rm inc}=0$ in $\mathbb{R}^3\backslash \overline{D}$. Jump relation across $\partial D$ yields that $v^{\rm inc}_1=0$ on $\partial D$, then $v^i_1=0$ in $D$ since $s^2$ is not a Dirichlet eigenvalue of $-\Delta$ in D. Now we have that $v^{\rm inc}_1=0$ in $\mathbb{R}^3\backslash \overline{D}$ and in $D$, therefore $\phi=0$   follows from the jump relation across $\partial D$.
\end{proof}

Now we can introduce the modified near-field operator $N_{\sigma} : W(B)\rightarrow W(B)$ by
\ben 
N_{\sigma}:=-AV_{\sigma}.
\enn 
The following result can be obtained directly from the properties on $A$ and $V_\sigma$.
\begin{lemma}\label{thm13}
    The modified volume near-field operator $N_{\sigma}:W(B)\rightarrow W(B)$ is injective and has dense range.
\end{lemma}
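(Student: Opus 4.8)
The statement to prove is Lemma~\ref{thm13}: the modified volume near-field operator $N_\sigma = -AV_\sigma : W(B) \to W(B)$ is injective and has dense range.

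\medskip

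The plan is to deduce everything from the factorization $N_\sigma = -AV_\sigma$ together with the mapping properties already established for the two factors. By Lemma~\ref{thm8}, $A : H^{1/2}(\partial D) \to W(B)$ is injective with dense range; by Lemma~\ref{thm11}, $V_\sigma : W(B) \to H^{1/2}(\partial D)$ is injective with dense range. For injectivity of $N_\sigma$: suppose $N_\sigma g = 0$ for some $g \in W(B)$. Then $A(V_\sigma g) = 0$, and since $A$ is injective this forces $V_\sigma g = 0$ in $H^{1/2}(\partial D)$; since $V_\sigma$ is injective, $g = 0$. Hence $N_\sigma$ is injective.

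\medskip

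For dense range, the argument is the standard composition fact: if $A$ has dense range and $V_\sigma$ has dense range, then $AV_\sigma$ has dense range. Concretely, one shows $\mathrm{range}(N_\sigma)^\perp = \{0\}$ in $W(B)$. Let $\psi \in W(B)$ satisfy $(N_\sigma g, \psi)_{L^2(B)} = 0$ for all $g \in W(B)$, i.e. $(A V_\sigma g, \psi)_{L^2(B)} = 0$ for all $g$. This says $V_\sigma g$ lies in the $A$-preimage sense; more cleanly, pass to adjoints: $(V_\sigma g, A^* \psi) = 0$ for all $g \in W(B)$, where $A^* : W(B) \to H^{-1/2}(\partial D)$. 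Since $V_\sigma$ has dense range in $H^{1/2}(\partial D)$, the functional $A^*\psi \in H^{-1/2}(\partial D)$ annihilates a dense subspace of $H^{1/2}(\partial D)$, hence $A^*\psi = 0$; since $A$ has dense range, $A^*$ is injective, so $\psi = 0$. Therefore $N_\sigma$ has dense range. Alternatively, and perhaps more transparently given what is written in the paper, one invokes directly the elementary lemma: a composition of two operators with dense range has dense range (using that $A$ is also continuous), which is exactly how Theorem~\ref{thm6} was obtained from the factorization $N = -AV$.

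\medskip

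I do not expect any genuine obstacle here: the lemma is a formal corollary of Lemmas~\ref{thm8} and~\ref{thm11}, in exact parallel to the way Theorem~\ref{thm6} follows from Lemmas~\ref{thm7} and~\ref{thm8}. The only minor point to be careful about is that ``dense range of a composition'' requires continuity of the outer operator $A$ (so that it maps a dense set to a set whose closure contains $\overline{A(\text{everything})}$), which holds since $A$ is bounded; and that both factors and the composition are viewed between the correct spaces, with $W(B)$ carrying the $L^2(B)$ inner product so that the adjoint argument is legitimate. The proof can therefore be stated in one or two sentences, simply citing the factorization $N_\sigma = -AV_\sigma$, Lemma~\ref{thm8}, and Lemma~\ref{thm11}.
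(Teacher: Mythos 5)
Your proposal is correct and takes essentially the same route as the paper: the paper's proof of Lemma~\ref{thm13} is a one-line appeal to the factorization $N_\sigma=-AV_\sigma$ together with Lemma~\ref{thm8} and Lemma~\ref{thm11}, exactly as you conclude. The extra detail you supply (injectivity of the composition, and the adjoint/continuity argument for dense range of the composition) is a correct elaboration of what the paper leaves implicit.
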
   
\begin{proof}
From Lemma \ref{thm8} and Lemma \ref{thm11}, both $A$ and $V_\sigma$ are injective and have dense range, this concludes the proof.
\end{proof}
\subsection{ The modified imaginary near-field operator $I$ in the Laplace domain}
In this subsection we introduce the modified imaginary near-field operator $I$ in the Laplace domain. To begin with, we first introduce the conjugate operator $\overline{N}: W(B)\rightarrow W^*(B)$ where
\begin{align*}
    (\overline{N}g)(x)=\int_B&\hat{u}^{\rm scat}(-\overline{s}, x; y)g(y)dy,\quad x\in B.
\end{align*}
Similarly, to factorize the operator $\overline{N}$, we introduce  $\overline{V}: W(B)\rightarrow H^{1/2}(\partial D)$ and $\overline{A}: H^{1/2}(\partial{D})\rightarrow W^*(B)$ where
\begin{align}
 \label{VV}  (\overline{V}g)(x)=\int_B&\hat\Phi_{-\overline{s}}(x; y)g(y)dy \quad\mbox{for}\quad x\in \partial D \qquad\mbox{and}\qquad \overline{A}g=w_g|_{B},
\end{align}
here $w_g \in H^1(\mathbb{R}^3\backslash \overline{D})$ is the unique solution to
$$
\left\{
\begin{array}{ll}
\Delta  w_{g}  + (-\overline{s})^2 w_{g}  =0,  &x\in \mathbb{R}^3\backslash\ov{D},\\
w_{g} =g,  &x\in\partial D.
\end{array}
\right.
$$
Similar to the deduction of \eqref{eqA}, we have  for any $g\in H^{1/2}(\partial D)$ that
\be\label{eqA2}
(\ov{A}g)(x)=\int_{\partial D} \frac{\partial \hat{u}_*(-\ov{s}, y; x)}{\partial \nu(y)}g(y)ds(y),\quad x\in B.
\en
 Similar to Theorem \ref{thm6},  $\overline{N}$ can be factored as $\overline{N}=-\overline{A}\overline{V}$ and it is injective and has dense range.

Now we are ready to study the imaginary near-field operator.
\begin{theorem}\label{thm14}
    The modified imaginary near-field operator 
    \begin{align*}
        I: W(B)&\rightarrow W(B)\bigoplus W^*(B)\\
        Ig=(&N_{\sigma}-\overline{N})g,\quad x\in B,
    \end{align*}
is injective and has dense range.
\end{theorem}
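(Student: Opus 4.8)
Throughout I read $Ig = N_\sigma g - \overline{N}g$ as a single element of $L^2(B)$: since $N_\sigma g \in W(B)$ and $\overline{N}g\in W^*(B)$, the difference lies in the internal sum $W(B)+W^*(B)\subseteq L^2(B)$, and the codomain $W(B)\oplus W^*(B)$ is this sum carrying the $L^2(B)$ topology. The plan is to prove injectivity by extracting the two summands from the vanishing difference, and to prove dense range by identifying the kernel of the $L^2(B)$-adjoint $I^*$ with $(W(B)+W^*(B))^\perp$.

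For injectivity, suppose $Ig=0$, i.e. $N_\sigma g = \overline{N}g =: w$ in $L^2(B)$. Then $w$ simultaneously solves $\Delta w + s^2 w = 0$ and $\Delta w + \overline{s}^2 w = 0$ in $B$; subtracting and using $s^2-\overline{s}^2 = 4ik\sigma\neq0$ (valid since $\sigma>0$ and $\mathrm{Re}(s)=k\neq0$; the single value $k=0$, where $W(B)=W^*(B)$, has measure zero in the later time-domain integration and can be excluded) forces $w=0$, so that $W(B)\cap W^*(B)=\{0\}$ and $N_\sigma g=0$. Lemma \ref{thm13} then gives $g=0$. I stress that this genuinely uses the difference structure: the vanishing of $N_\sigma g$ is not assumed but \emph{extracted} from $W(B)\cap W^*(B)=\{0\}$, in contrast to reading $I$ as a pair map.

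For dense range I will show $\ker I^* = (W(B)+W^*(B))^\perp$. The inclusion $(W(B)+W^*(B))^\perp\subseteq\ker I^*$ is immediate, since $\psi\perp W(B)$ and $\psi\perp W^*(B)$ give $\langle Ig,\psi\rangle = \langle N_\sigma g,\psi\rangle-\langle\overline{N}g,\psi\rangle=0$. The content is the reverse inclusion: let $\psi\in L^2(B)$ satisfy $\langle N_\sigma g,\psi\rangle = \langle\overline{N}g,\psi\rangle$ for all $g\in W(B)$. Using the factorizations $N_\sigma=-AV_\sigma$, $\overline{N}=-\overline{A}\,\overline{V}$ together with the representations \eqref{eqA}, \eqref{eqA2} and the conjugation symmetry $\overline{\hat u_*(s,\cdot;\cdot)}=\hat u_*(-\overline{s},\cdot;\cdot)$ (equivalently $\overline{N}^*=N$), I will rewrite both pairings as boundary integrals over $\partial D$ of normal derivatives of total-field volume potentials generated by $\psi$ at the two wavenumbers $-\overline{s}$ and $s$. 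The hypothesis then asserts that a specific combination of these potentials has vanishing Cauchy data on $\partial D$; Holmgren's theorem propagates the vanishing into $\mathbb{R}^3\setminus(\overline{D}\cup\overline{B})$ up to $\partial B$, and the volume-potential equations in $B$ combined with the $H^2_0(B)$ integration-by-parts argument used in Lemmas \ref{thm7}, \ref{thm8}, \ref{thm11} will force $\psi\perp W(B)$ and $\psi\perp W^*(B)$, i.e. $\psi\in(W(B)+W^*(B))^\perp$.

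The main obstacle is precisely this last separation. Because $N_\sigma$ and $\overline{N}$ are attached to the two distinct Helmholtz operators $\Delta+s^2$ and $\Delta+\overline{s}^2$, the single identity couples a wavenumber-$s$ object to a wavenumber-$(-\overline{s})$ one, and one must disentangle them to obtain orthogonality to each of $W(B)$ and $W^*(B)$ \emph{separately}. This cannot be bypassed: since $s$ and $\overline{s}$ are close for small $\sigma$, the subspaces $W(B)$ and $W^*(B)$ meet at zero angle, so $W(B)+W^*(B)$ is not closed and the projections onto the two summands are unbounded; hence dense range of the difference does not reduce to the individually established dense ranges of $N_\sigma$ and $\overline{N}$, and must be proved by the unique-continuation argument above, run for each wavenumber and reconciled through the common Cauchy data on $\partial D$.
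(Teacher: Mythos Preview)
Your injectivity argument is essentially the paper's: both use $W(B)\cap W^*(B)=\{0\}$ to split $N_\sigma g-\overline{N}g=0$ into $N_\sigma g=0$ and $\overline{N}g=0$, then invoke injectivity of $N_\sigma$.

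For dense range, the architecture you sketch---adjoint pairing, rewrite via the kernels \eqref{eqA} and \eqref{eqA2}, unique continuation and jump relations, then the $H^2_0(B)$ integration-by-parts---is also what the paper does. But there is a genuine gap at the step you yourself call ``the main obstacle.'' You assert that ``the hypothesis then asserts that a specific combination of these potentials has vanishing Cauchy data on $\partial D$,'' and then invoke Holmgren. That is not what one obtains. After the single-layer identity and the jump relations, the two total-field volume potentials
\[
v_1(y)=\int_B \hat u_*(s,x;y)\,\overline{\psi(x)}\,dx,\qquad
v_2(y)=\int_B \hat u_*(-\overline{s},x;y)\,\overline{\psi(x)}\,dx
\]
have \emph{equal} Neumann traces on $\partial D$ (both already have zero Dirichlet trace, being total fields on a sound-soft boundary). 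So $v_1-v_2$ does have zero Cauchy data on $\partial D$, but $v_1-v_2$ does not satisfy any single Helmholtz equation---$v_1$ solves $(\Delta+s^2)\cdot=0$ near $\partial D$ and $v_2$ solves $(\Delta+\overline{s}^2)\cdot=0$---so Holmgren is unavailable for the difference, and your proposed ``propagate the vanishing'' never starts. Nothing in your outline converts the \emph{equality} $\partial_\nu v_1=\partial_\nu v_2$ into the \emph{vanishing} $\partial_\nu v_1=\partial_\nu v_2=0$ that the subsequent $H^2_0(B)$ argument needs.

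The paper supplies exactly this missing ingredient, and it is not a unique-continuation argument: it is a radiation/asymptotics argument. The function $v_1$ is an $H^1_s(\mathbb{R}^3\setminus\overline{D})$ scattering solution radiating at wavenumber $s$ (the physical, outgoing branch), whereas $v_2$ is the $H^1$ solution at wavenumber $-\overline{s}$, i.e.\ the nonphysical/incoming branch. These two global behaviors are incompatible; once their Neumann traces on $\partial D$ coincide, each must vanish separately. Only then is Holmgren applied to each $v_j$ individually (now with genuinely zero Cauchy data) to push the vanishing to $\partial B$ and run the $H^2_0(B)$ step. Your final paragraph diagnoses the difficulty accurately---the two wavenumbers must be decoupled and this does not follow from the separate dense ranges of $N_\sigma$ and $\overline{N}$---but ``reconciled through the common Cauchy data on $\partial D$'' is a restatement of the problem, not its solution; the decoupling comes from infinity, not from $\partial D$.
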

\begin{proof}
If $\hat{u}\in W(B)\cap W^*(B)$, then $u$ satisfies both 
\ben
\Delta \hat{u}+s^2\hat{u}=0\quad\text{in} \quad B \quad
\text{and}\quad
\Delta \hat{u}+\overline{s}^2\hat{u}=0\quad\text{in}\quad B,
\enn
respectively. Thus $\hat{u}=0$ since $s^2\neq\overline{s}^2$, then $W(B)\bigoplus W^*(B)$ is a direct sum.

To prove $I$ is injective, let $Ig=0$ for some $g\in W(B)$. Since $W(B)\bigoplus W^*(B)$ is a direct sum and $(N_{\sigma}-\overline{N})g = 0$, we have $N_{\sigma}g=0$ and $\overline{N}g=0$.  Theorem \ref{thm6} and Theorem \ref{thm13} yields that $g=0$ in $B$.

{ To show $I$ has dense range, { let $f\in W(B)$ and $h\in W^*(B)$ }and suppose that { $\langle Ig,f+h\rangle=0$,} $\forall g\in W(B)$, i.e., { $\langle(-AV_\sigma+\overline{A}\overline{V})g, f+h\rangle=0$.} Using \eqref{eqA} -- \eqref{vsigma} and \eqref{VV} -- \eqref{eqA2}, one can write
\begin{align*}
    &\int_B\int_{\partial D}\frac{\partial \hat{u}_*(s, x; y)}{\partial \nu(y)}\int_B \hat\Phi_{\overline{s}}(y;z)g(z)dzds(y)\overline{f(x)}dx \\
    =&\int_B\int_{\partial D}\frac{\partial \hat{u}_*(-\overline{s}, x; y)}{\partial \nu(y)}\int_B \hat\Phi_{-\overline{s}}(z; y)g(z)dzds(y)\overline{h(x)}dx.
\end{align*}
Since the above equation holds for all $g\in W(B)$, then the following equation holds in $L^2(B)$ where
\begin{align*}
&\int_{\partial D}\hat\Phi_{\overline{s}}(z;y)\int_B\frac{\partial \hat{u}_*(s, x; y)}{\partial \nu(y)}\overline{f(x)}dx ds(y)\\
=&\int_{\partial D}\hat\Phi_{-\overline{s}}(z; y)\int_B\frac{\partial \hat{u}_*(-\overline{s}, x; y)}{\partial \nu(y)}\overline{h(x)}dx ds(y),\quad z \in B.
\end{align*}
Using the unique continuation principle, this implies the above equation  still holds in $z\in \mathbb{R}^3 \backslash \overline{D}$; moreover the above equation also holds in $D$, this is because: continuity of the single layer potential yields that the above equation holds  on $\partial D$, note in addition that both the left hand side and right hand side satisfy the Helmholtz equation with the same (complex) wavenumber, one can get that the above equation holds in $D$ as well. Now jump relation of the conormal derivative leads to{ 
$$\int_B\frac{\partial \hat{u}_*(s, x; y)}{\partial \nu(y)}\overline{f(x)}dx=\int_B\frac{\partial \hat{u}_*(-\overline{s}, x; y)}{\partial \nu(y)}\overline{h(x)}dx,\quad y\in \partial D.
$$}%
The function on the left represents a physical wave that is radiating, but the function on the right hand represents a nonphysical wave, therefore it follows that
{ 
$$
\int_B\frac{\partial \hat{u}_*(s, x; y)}{\partial \nu(y)}\overline{f(x)}dx=0\quad \mbox{and}\quad \int_B\frac{\partial \hat{u}_*(-\overline{s}, x; y)}{\partial \nu(y)}\overline{h(x)}dx=0\quad \mbox{for}\quad y\in\partial D.
$$}%
It remains to show that both $f$ and $h$ vanish. In the following we show that $f$ vanishes and the case for $h$ follows exactly the same. Specifically, consider the the volume potential
{ 
$$v^{\rm inc}(x)=\int_B \hat{\Phi}_s(x; y)\overline{f(x)}dx,\quad y\in \mathbb{R}^3,$$}
then the corresponding scattered wave and total wave are
{ 
$$v^{\rm scat}(y)=\int_B \hat{u}^{\rm scat}_*(s, x; y)\overline{f(x)}dx\quad \mbox{and} \quad v(y)=\int_B \hat{u}_*(s, x; y)\overline{f(x)} dx~~~\mbox{for}~~~y\in \mathbb{R}^3\backslash\overline{D},$$}%
respectively.
This implies that $v={\partial v}/{\partial \nu}=0$ on $\partial D,$ thus one has $v=0$ in $\mathbb{R}^3\backslash \{\overline{D}\cup\overline{B}\}$ via Holmgren's theorem. Since $v$ is as smooth as $v^i\in H^2_{loc}(\mathbb{R}^3)$, one has $v={\partial v}/{\partial \nu}=0$ on $\partial B.$ Moreover, because $v^{\rm scat}$ satisfies the Helmholtz equation in $B$, we have 
{ 
$$\Delta v+s^2 v=\Delta v^{\rm inc}+s^2v^{\rm inc}=-\overline{f} \quad \mathrm{in} \quad B.$$}%
Taking the $L^2(B)$-scalar product of { $\Delta v+ s^2 v=-\overline{f}$ }
with { $f\in W(B)$}, we obtain that
{ 
\begin{equation*}
-\Vert f\Vert^2_{L^2(B)} = \int_B\left(\Delta v(x)+{s}^2 v(x)\right)f(x)dx= \int_B v(x) \left(\Delta  f+s^2f \right) dx,
\end{equation*}}%
where one applies integration by parts two times and $v\in H^2_0(B)$. Since { $\Delta f+s^2f=0$} in $B$, the right hand side vanishes, i.e., $f=0$ in $B$. {Similarly, we obtain that $h=0\ \mathrm{in} \ B$. This completes the proof.}}
\end{proof}

 \section{Time domain linear sampling method for passive imaging} \label{sec:5}
 In this section, we develop the time domain linear sampling method for the mathematical measurement data \eqref{data time domain scattered}; such data \eqref{data time domain scattered} will be approximated by the data \eqref{data time domain total} due to random sources, which will be discussed in Section \ref{sec:6}. We apply the Laplace transform to relate the mapping properties of Laplace domain factorized operators in Section \ref{sec:4} to their counterparts in the time domain, followed by the main result of the linear sampling method.
\subsection{Mapping properties in the time domain} 
To begin with, we introduce
\ben
    W^p_{\sigma}\left(\mathbb{R}, L^2(B)\right)=\left\{g\in  H^p_{\sigma}\left(\mathbb{R}, L^2(B)\right):\Delta g-g''=0\quad\text{in}\quad B\right\} \quad 
\enn
and
\ben
    (W^*)^p_{\sigma}\left(\mathbb{R}, L^2(B)\right)=\left\{g\in H^p_{\sigma}\left(\mathbb{R}, L^2(B)\right):\Delta g-4\sigma^2g+4\sigma g'-g''=0\quad\text{in}\quad B\right\}.
\enn
It can be seen that for $\sigma>0$, the Laplace transforms of $g_1\in W^p_{\sigma}\left(\mathbb{R}, L^2(B)\right)$ and $g_2\in  (W^*)^p_{\sigma}\left(\mathbb{R}, L^2(B)\right)$ belong to $W(B)$ and $W^*(B)$, respectively. We now prove the following lemma for the operators $A, \overline{A}, V_{\sigma}$ and $\overline{V}$, which will be used to study the corresponding time domain operators $\mathcal{A}, \overline{\mathcal{A}}, \mathcal{V}_{\sigma}$ and $\overline{\mathcal{A}}$. 
 In the following, the constants $C$ may be different but they only depend on $D$, $B$ and $\sigma_0$.
\begin{lemma}\label{thm17}
    For $s=k+i\sigma$ with some fixed imaginary part $\sigma>\sigma_0>0$, we have
    \ben
       \Vert A\Vert_{H^{1/2}(\partial D)\rightarrow W(B)}\leq C|s|^{1/2}\quad&\text{and}&\quad
       \Vert \overline{A}\Vert_{H^{1/2}(\partial D)\rightarrow W^*(B)}\leq C|s|^{1/2}, \\
       \Vert V_{\sigma}\Vert_{W(B)\rightarrow H^{1/2}(\partial D)}\leq C|s|\quad&\text{and}&\quad
       \Vert \overline{V}\Vert_{W(B)\rightarrow H^{1/2}(\partial D)}\leq C|s|,\\
       \Vert N_{\sigma}\Vert_{W(B)\rightarrow W(B)}\leq C|s|^{3/2}\quad&\text{and}&\quad
       \Vert \overline{N}\Vert_{W(B)\rightarrow W^*(B)}\leq C|s|^{3/2},
    \enn
   where the constant $C$ depends on $D$, $B$, and $\sigma_0$.
\end{lemma}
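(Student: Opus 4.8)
The plan is to establish each of the six operator-norm bounds by reducing the estimate on the time-domain-relevant operator to a Laplace-domain mapping estimate, and in every case the source of the power of $|s|$ will be either the well-posedness estimate of Lemma \ref{thm4} or a direct kernel estimate on the Helmholtz/Yukawa fundamental solution $\hat\Phi_s$. First I would treat $A$. By definition \eqref{opA}, $A\hat f = \hat u^{\rm scat}|_B$, where $\hat u^{\rm scat}\in H^1_s(\R^3\ba\ov D)$ solves \eqref{helmeq2} with boundary data $\hat f$. Lemma \ref{thm4} gives $\|\hat u^{\rm scat}\|^2_{H^1_s(\R^3\ba\ov D)}\le C\,\frac{1}{\sigma^2}\max(\frac1{\sigma_0},1)\,|s|^3\,\|\hat f\|^2_{H^{1/2}(\partial D)}$; since $\sigma>\sigma_0>0$ is fixed, the $\sigma$-dependent prefactor is absorbed into $C=C(D,B,\sigma_0)$, leaving $\|\hat u^{\rm scat}\|_{H^1_s(\R^3\ba\ov D)}\le C|s|^{3/2}\|\hat f\|_{H^{1/2}(\partial D)}$. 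To pass from the $H^1_s$-norm on the exterior domain to the $L^2(B)$-norm (the norm on $W(B)$), note $B\subset\R^3\ba\ov D$ is bounded and $\|w\|_{L^2(B)}\le \frac{1}{|s|}\|sw\|_{L^2(B)}\le\frac{1}{|s|}\|w\|_{H^1_s(\R^3\ba\ov D)}$ by the definition of the $H^1_s$-norm. Hence $\|A\hat f\|_{L^2(B)}\le C|s|^{-1}|s|^{3/2}\|\hat f\|_{H^{1/2}(\partial D)}=C|s|^{1/2}\|\hat f\|_{H^{1/2}(\partial D)}$, which is the claimed bound. The estimate for $\overline A$ is identical: $\overline A$ is defined via the solution $w_g$ of the same type of equation with wavenumber $-\ov s$ (so $(-\ov s)^2=\ov{s}^2$ and $|-\ov s|=|s|$), and Lemma \ref{thm4} applies verbatim with $s$ replaced by $-\ov s$ (note $\mathrm{Im}(-\ov s)=\sigma>\sigma_0>0$ still), giving the same $C|s|^{1/2}$ bound.

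Next I would handle the volume operators $V_\sigma$ and $\overline V$. Here the output lives in $H^{1/2}(\partial D)$ and the input in $W(B)\subset L^2(B)$, with $\partial D$ and $B$ separated by the positive distance $d$. The kernel $\hat\Phi_{\ov s}(x;y)=e^{i\ov s|x-y|}/(4\pi|x-y|)$ for $x\in\partial D$, $y\in B$ satisfies $|x-y|\ge d$, and $|e^{i\ov s|x-y|}|=e^{-\sigma|x-y|}\le 1$ since $\sigma>0$; the derivatives in $x$ up to the order needed to control the $H^{1/2}(\partial D)$ (or $H^1(\partial D)$, then interpolate, or use trace) norm bring down at most one factor of $|\ov s|=|s|$, uniformly in $x\in\partial D$, $y\in B$, because $|x-y|^{-1}\le d^{-1}$ is bounded. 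Thus the map $g\mapsto \int_B\hat\Phi_{\ov s}(\cdot;y)g(y)\,dy$ is bounded from $L^2(B)$ into, say, $C^1(\partial D)\hookrightarrow H^{1/2}(\partial D)$ with operator norm $\le C(1+|s|)\le C|s|$ (the last step using $|s|\ge|\mathrm{Im}\,s|=\sigma>\sigma_0$, so $1+|s|\le (1+\sigma_0^{-1})|s|$). The same estimate applies to $\overline V$, whose kernel is $\hat\Phi_{-\ov s}(x;y)$ with $|e^{i(-\ov s)|x-y|}|=e^{-\sigma|x-y|}\le1$ again (the exponential decay is governed by $\mathrm{Im}(-\ov s)=\sigma>0$). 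This gives $\|V_\sigma\|,\|\overline V\|\le C|s|$.

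Finally, the bounds on $N_\sigma$ and $\overline N$ follow immediately by composition: $N_\sigma=-AV_\sigma$ and $\overline N=-\overline A\,\overline V$, so $\|N_\sigma\|\le\|A\|\,\|V_\sigma\|\le C|s|^{1/2}\cdot C|s|=C|s|^{3/2}$, and likewise $\|\overline N\|\le\|\overline A\|\,\|\overline V\|\le C|s|^{3/2}$. The main obstacle is the $V_\sigma$/$\overline V$ estimate: one must be slightly careful about which Sobolev norm on $\partial D$ is used and about verifying that differentiating the kernel in the tangential/boundary directions costs exactly one power of $|s|$ and no more, uniformly over the separated sets $\partial D$ and $B$ — this is where the hypothesis $d=\mathrm{dist}(\ov B,\ov D)>0$ and the $C^{2,\alpha}$ regularity of $\partial D$ are used. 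The $A$/$\overline A$ estimates are essentially a bookkeeping exercise on top of the cited Lemma \ref{thm4}, and the composition step for $N_\sigma$, $\overline N$ is trivial once the factors are in hand.
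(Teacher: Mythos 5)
Your argument follows the paper's proof essentially step for step: the bound on $A$ (and $\overline A$) comes from Lemma \ref{thm4} combined with $\|w\|_{L^2(B)}\le |s|^{-1}\|w\|_{H^1_s}$, the bounds on $V_\sigma$ and $\overline V$ are kernel estimates exploiting the separation $d=\mathrm{dist}(\overline B,\overline D)>0$ (the paper bounds $\|v^{\rm inc}\|_{H^1(D)}$ via Minkowski's inequality and then applies the trace theorem, whereas you bound the potential in $C^1(\partial D)$ and embed — a cosmetic difference), and the $N_\sigma$, $\overline N$ bounds follow by composing the factorizations, exactly as in the paper.

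One slip worth correcting: for the $V_\sigma$ kernel $\hat\Phi_{\overline s}$ you claim $|e^{i\overline s|x-y|}|=e^{-\sigma|x-y|}\le 1$, but with $\overline s=k-i\sigma$ one has $i\overline s=ik+\sigma$, so $|e^{i\overline s r}|=e^{\sigma r}\ge 1$; only the $\overline V$ kernel $\hat\Phi_{-\overline s}$ enjoys the decaying exponential. The estimate survives because $|x-y|$ is bounded above on $\partial D\times B$, so $e^{\sigma|x-y|}$ is bounded by a constant — though one then depending on $\sigma$ itself rather than only on $\sigma_0$. The paper's own proof elides this point with ``the estimate for $V_\sigma$ follows similarly,'' so your argument is no worse off, but the sign claim as written is incorrect.
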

\begin{proof}
For $\hat{g}\in H^{1/2}(\partial D)$,   estimate \eqref{eq22} and the definition of $A$   \eqref{opA} yields that
    \begin{equation*}
       \Vert A\hat{g}\Vert^2_{H^1_s(B)}\leq C\frac{1}{{\sigma}^2}\mathrm{max}\left(\frac{1}{\sigma_0}, 1\right)|s|^3\Vert \hat{g}\Vert^2_{H^{{1}/{2}}(\partial D)}.
    \end{equation*}
Note that $\Vert A\hat{g}\Vert^2_{H^1_s(B)} \ge |s|^2\Vert A\hat{g}\Vert^2_{L^2(B)}$, then it can be seen that
    \begin{equation*}
       \Vert A\Vert_{H^{1/2}(\partial D)\rightarrow W(B)}\leq C|s|^{1/2},
    \end{equation*}
    where the constant $C$ depends on $D$ and $\sigma_0$.
    The estimate for $\overline{A}$ follows similarly.
    
For operator $\overline{V}$, by the definition of $\overline{V}$ in \eqref{VV}, we have
{ 
    \begin{equation*}
       \Vert \overline{V}\hat f\Vert_{H^{1/2}(\partial D)}=\left\| \int_B\hat\Phi_{-\overline{s}}(\cdot;y)\hat f(y)dy\right\|_{H^{1/2}(\partial D)}.
    \end{equation*}}
For $\hat{f} \in W(B)$, we consider 
    \begin{equation*}
       v^{\rm inc}(x):=\int_B\hat\Phi_{-\overline{s}}(x;y)\hat{f}(y)dy.
    \end{equation*}
    From trace theorem, it is sufficient to estimate
    $\Vert v^{\rm inc}\Vert_{H^1(D)}$. Using  Minkowski's integral inequality and that $d=\text{dist}\{\overline{B}, \overline{D}\}>0$, 
    \begin{align}\label{eqvi}    
    \Vert v^{\rm inc} \Vert^2_{L^2(D)}
    &=\int_D\left|\int_B\hat\Phi_{-\overline{s}}(x; y)\hat{f}(y)dy\right|^2dx\\
    &\leq\left(\int_B|\hat{f}(y)|^2dy\right) \cdot \left(\int_D \frac{1}{(4\pi)^2d^2}dx\right) 
        \leq C\Vert \hat{f}\Vert^2_{L^2(B)}  \nonumber 
    \end{align}
    for some constant $C$ depending on $B$ and $D$. To estimate $\nabla v^{\rm inc}$, from
    \begin{align*}
    \left(v^{\rm inc}(x)\right)'_{x_i} =\int_B\left(\frac{-i\overline{s}e^{-i\overline{s}|x-y|}\cdot (x_i-y_i)}{4\pi|x-y|^2}-\frac{e^{-i\overline{s}|x-y|}\cdot (x_i-y_i)}{4\pi|x-y|^3}\right)\hat f(y)dy,
    \end{align*}
and the Minkowski's integral inequality, it follows that
    \begin{align} \label{eqvii}
       &\left\| \left(v^{\rm inc}(x)\right)'_{x_i}\right\|_{L^2(D)} 
      \leq\int_B\left(\int_D \left(\frac{|s|}{4\pi|x-y|}+\frac{1}{4\pi|x-y|^2}\right)^2|\hat f(y)|^2dx\right)^{1/2}dy\\
      \nonumber &\quad\quad\quad\leq\left(\int_B|\hat f(y)|^2dy\right)^{1/2}\cdot \left(\int_D\left(\frac{|s|^2}{16\pi^2d^2}+\frac{|s|}{8\pi^2d^3}+\frac{1}{16\pi^2d^4}\right)dx\right)^{1/2} \\
      \nonumber&\quad\quad\quad\leq C|s|\Vert \hat f\Vert_{L^2(B)}.  
    \end{align}
    From \eqref{eqvi} and \eqref{eqvii} we prove that $$\Vert \overline{V}\Vert_{W(B)\rightarrow H^{1/2}(\partial D)}\leq C|s|.$$
The estimate for $V_{\sigma}$ follows similarly. 

Finally it follows from $N_\sigma = -AV_\sigma$ and $\ov{N}=-\ov{A}\ov{V}$ that
    \begin{align*}
       \Vert \overline{N}\Vert_{W(B)\rightarrow W^*(B)}\leq C|s|^{3/2}\quad\text{and}\quad \Vert N_{\sigma}\Vert_{W(B)\rightarrow W(B)}\leq C|s|^{3/2},
    \end{align*}
    which concludes the proof.
\end{proof}

Using the above lemma, we can prove the properties of the following time domain operators.  Recall that the time domain operators are related by their Laplace counterparts, i.e, for any Laplace domain operator $\hat{A}(s)$ satisfying
   \begin{equation*}
        \Vert\hat{A}(s)\Vert_{X\rightarrow Y}\leq C|s|^r, 
    \end{equation*}
the time domain operator $A$ given by
$$
Ag=\int_{-\infty}^{\infty}a(t)g(\cdot-t)dt \qquad \mbox{ where } \qquad a(t)=\frac{1}{2\pi}\int_{-\infty+i\sigma}^{\infty+i\sigma}\hat{A}(s)e^{-ist}dt,
$$
defines a bounded operator from $H^p_{\sigma}(\mathbb{R},X)$ to $H^{p-r}_{\sigma}(\mathbb{R},Y)$.
\begin{theorem}\label{thm18}
For some fixed $\sigma>0$, the operators in the time domain corresponding to $V_{\sigma}, \overline{V}, A, \overline{A}$ and $N_\sigma, \ov{N}$ are
\begin{align*}
        &\mathcal{V_{\sigma}}: W^p_{\sigma}(\mathbb{R}, L^2(B))\rightarrow H^{p-1}_{\sigma}(\mathbb{R}, H^{1/2}(\partial D)), \\
        &\mathcal{\overline{V}}: W^p_{\sigma}(\mathbb{R}, L^2(B))\rightarrow H^{p-1}_{\sigma}(\mathbb{R}, H^{1/2}(\partial D)),\\
        &\mathcal{A}: H^{p}_{\sigma}(\mathbb{R}, H^{1/2}(\partial D))\rightarrow W^{p-1/2}_{\sigma}(\mathbb{R}, L^2(B)), \\
        &\mathcal{\overline{A}}: H^{p}_{\sigma}(\mathbb{R}, H^{1/2}(\partial D))\rightarrow \left(W^*\right)^{p-1/2}_{\sigma}(\mathbb{R}, L^2(B)),    \\
        &\mathcal{N_{\sigma}}: W^p_{\sigma}(\mathbb{R}, L^2(B))\rightarrow W^{p-3/2}_{\sigma}(\mathbb{R}, L^2(B)),\\  
        &\overline{\mathcal{N}}: W^p_{\sigma}(\mathbb{R}, L^2(B))\rightarrow \left(W^*\right)^{p-3/2}_{\sigma}(\mathbb{R}, L^2(B)),
    \end{align*}
    respectively. They are all bounded, injective and have dense range.
\end{theorem}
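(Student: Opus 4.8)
The plan is to realize each of the six operators as the convolution operator associated, through the inverse Fourier--Laplace transform, with its Laplace domain symbol --- $A(s)$ from \eqref{opA}, $V_\sigma(s)$ from \eqref{vsigma}, the operators $\overline A(s)$ and $\overline V(s)$ from \eqref{VV}--\eqref{eqA2}, and $N_\sigma(s)=-A(s)V_\sigma(s)$, $\overline N(s)=-\overline A(s)\overline V(s)$ --- and then to push every required property from the line $\{\mathrm{Im}\,s=\sigma\}$ down to the time domain. The device for this is the correspondence recalled in Section~\ref{sec: functional framework}: the Fourier--Laplace transform is, up to the weight $|s|^{2p}$, an isometric isomorphism between $H^p_\sigma(\mathbb R,X)$ and the space of $X$-valued $L^2$ functions on that line; it intertwines the time domain convolution operator carrying a symbol $\widehat A(s)$ with fiberwise multiplication by $\widehat A(s)$; if $\|\widehat A(s)\|_{X\to Y}\le C|s|^r$ on the line then this convolution operator is bounded from $H^p_\sigma(\mathbb R,X)$ to $H^{p-r}_\sigma(\mathbb R,Y)$; and the pairing $\langle\phi,f\rangle_\sigma=\int_{\mathrm{Im}\,s=\sigma}\langle\widehat\phi(s),\widehat f(s)\rangle\,ds$ identifies $H^{-q}_\sigma(\mathbb R,Y^*)$ with the dual of $H^q_\sigma(\mathbb R,Y)$ (see \cite{Bamberger1986,Sayas2016}).

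For boundedness and the mapping diagram I would feed Lemma~\ref{thm17} --- whose estimates hold uniformly in $k=\mathrm{Re}\,s$ for fixed $\sigma>\sigma_0>0$ --- into the transference principle above with $r=1$ for $\mathcal V_\sigma,\overline{\mathcal V}$, $r=1/2$ for $\mathcal A,\overline{\mathcal A}$, and $r=3/2$ for $\mathcal N_\sigma,\overline{\mathcal N}$ (the last pair alternatively via $\mathcal N_\sigma=-\mathcal A\mathcal V_\sigma$, $\overline{\mathcal N}=-\overline{\mathcal A}\,\overline{\mathcal V}$). That the ranges land in the constrained spaces $W^p_\sigma(\mathbb R,L^2(B))$ and $(W^*)^p_\sigma(\mathbb R,L^2(B))$ (with the shifted indices of the statement) I would read off the PDE correspondence under $\mathcal L$: since $\mathcal L[g']=-is\,\mathcal L[g]$ and $\mathcal L[g'']=-s^2\,\mathcal L[g]$, a function whose transform lies in $W(B)$, i.e.\ solves $\Delta\widehat g+s^2\widehat g=0$ in $B$, solves $\Delta g-g''=0$ in $B$, and --- using $\overline s=s-2i\sigma$ --- one whose transform lies in $W^*(B)$ solves $\Delta g-4\sigma^2 g+4\sigma g'-g''=0$ in $B$; since $A,N_\sigma$ are $W(B)$-valued and $\overline A,\overline N$ are $W^*(B)$-valued by construction, while $\mathcal V_\sigma,\overline{\mathcal V}$ act \emph{on} $W^p_\sigma(\mathbb R,L^2(B))$, the displayed domains and codomains come out exactly.

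Injectivity is fiberwise: if $\mathcal A g=0$ then $A(s)\widehat g(s)=0$ for a.e.\ $s$ on the line, and $A(s)$ is injective for every $s$ with $\sigma>0$ by Lemma~\ref{thm8}, hence $\widehat g(s)=0$ a.e.\ and $g=0$; the same applies to $\mathcal V_\sigma$ (Lemma~\ref{thm11}), to $\overline{\mathcal V}$ and $\overline{\mathcal A}$ (the injectivity of $\overline V,\overline A$ obtained by the arguments of Lemmas~\ref{thm7}--\ref{thm8} and recorded before Theorem~\ref{thm14}), and to $\mathcal N_\sigma,\overline{\mathcal N}$ (Lemma~\ref{thm13} and the factorizations). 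For dense range I would pass to adjoints with respect to the $\sigma$-pairing: from $\langle\phi,\mathcal A f\rangle_\sigma=\int\langle\widehat\phi(s),A(s)\widehat f(s)\rangle\,ds=\int\langle A(s)^*\widehat\phi(s),\widehat f(s)\rangle\,ds$ one sees that the adjoint of $\mathcal A$ is the convolution operator with symbol $A(s)^*$, so $\mathcal A$ has dense range iff $A(s)^*$ is injective for a.e.\ $s$, iff $A(s)$ has dense range for a.e.\ $s$ --- precisely Lemma~\ref{thm8}, density being understood in the closed, self-dual subspace $W(B)\subset L^2(B)$. The same argument handles $\mathcal V_\sigma,\overline{\mathcal V},\overline{\mathcal A}$ via Lemma~\ref{thm11} and the $\overline V,\overline A$ analogues, and for $\mathcal N_\sigma,\overline{\mathcal N}$ one may invoke Lemma~\ref{thm13} directly or note that, $\mathcal V_\sigma$ (resp.\ $\overline{\mathcal V}$) having dense range and $\mathcal A$ (resp.\ $\overline{\mathcal A}$) being bounded with dense range, $\overline{\mathrm{range}(\mathcal A\mathcal V_\sigma)}\supseteq\mathcal A\big(\overline{\mathrm{range}(\mathcal V_\sigma)}\big)=\mathrm{range}(\mathcal A)$ is dense.

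The step I expect to require the most care is the dense range argument, although the difficulty is bookkeeping rather than analysis. One must check that the $\sigma$-weighted pairing truly realizes the dual of $H^q_\sigma(\mathbb R,Y)$, that the Laplace transform of the time domain adjoint coincides on the integration line with the fiberwise Hilbert-space adjoint $A(s)^*$, and --- the genuinely subtle point --- that this adjoint must be taken with target the \emph{subspace} $W^q_\sigma(\mathbb R,L^2(B))$ (fiberwise $W(B)$), not all of $H^q_\sigma(\mathbb R,L^2(B))$, since $A(s)$ does \emph{not} have dense range in the full $L^2(B)$. This in turn requires verifying that $W(B)$ and $W^*(B)$ are closed subspaces of $L^2(B)$ --- hence, with the $L^2(B)$ inner product, self-dual --- so that the fiberwise dense-range conclusions of Lemmas~\ref{thm8}, \ref{thm11}, \ref{thm13} are exactly density in the spaces at hand. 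Once this is arranged, boundedness, injectivity and dense range all descend cleanly from Lemma~\ref{thm17} and the Laplace domain results of Section~\ref{sec:4}.
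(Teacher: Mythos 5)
Your proposal is correct and follows essentially the same route as the paper: boundedness from Lemma~\ref{thm17} via the Laplace-transform transfer principle, injectivity fiberwise from the injectivity of the Laplace-domain operators, and dense range by passing to adjoints with respect to the $\sigma$-pairing and reducing their injectivity to the Laplace-domain dense-range results of Lemmas~\ref{thm7}--\ref{thm13} and Theorem~\ref{thm14}. The only difference is presentational --- the paper writes out the time-domain integral kernels of $\mathcal{V}_\sigma^*$ and $\mathcal{A}^*$ explicitly before Laplace-transforming them, whereas you argue directly at the level of symbols --- and your flagged subtlety (that dense range must be read in the closed subspaces $W(B)$, $W^*(B)$ of $L^2(B)$) is exactly the point the paper handles implicitly.
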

\begin{proof}

We firstly prove that $\mathcal{V_{\sigma}}$ is injective. Let $\mathcal{V_{\sigma}}g=0\in H^{p-1}_{\sigma}\left(\mathbb{R}, H^{1/2}(\partial D)\right)$, then $\mathcal{L}[\mathcal{V_{\sigma}}g]=0$ for all $s=k+i\sigma, k\in\mathbb{R}$. By the injectivity of $V_\sigma = \mathcal{L}[\mathcal{V_{\sigma}} ]$ in Theorem \ref{thm11}, we have $\mathcal{L}[g]=0$ in $W(B)$ so $g$ vanishes in $W^p_{\sigma}\left(\mathbb{R}, L^2(B)\right)$.
      
     The dense range of $\mathcal{V_{\sigma}}$ can be given by showing the injectivity of its adjoint operator $\mathcal{V_{\sigma}}^*$. For $\hat{g}\in W(B)$, take the inverse Laplace transform of $V_{\sigma}\hat g$ defined in \eqref{vsigma}, we have the explicit form of $\mathcal{V_{\sigma}}$ as
     \begin{align*}
      (\mathcal{V_{\sigma}}g)(t, x)=\int_B\int_{-\infty}^{+\infty}e^{2\sigma(t-\xi)}&\frac{\delta(t-\xi-|x-y|)}{4\pi|x-y|}g(\xi, y)d\xi dy.
    \end{align*}
 Consider $g\in C^{\infty}_0\left(\mathbb{R}, L^2(B)\right)$ and $f\in C^{\infty}_0\left(\mathbb{R}, H^{-1/2}(\partial D)\right)$, we have
 {\small
     \begin{align*}
        \langle\mathcal{V_{\sigma}}g, f\rangle_\sigma
        &=\int_{\partial D}\int_{-\infty}^{+\infty}\left(\int_B\int_{-\infty}^{+\infty}e^{2\sigma(t-\xi)}\frac{\delta(t-\xi-|x-y|)}{4\pi|x-y|}g(\xi, y)d\xi dy\right)e^{-2\sigma t}f(t,x)dtds(x)\\
        &=\int_B\int_{-\infty}^{+\infty}e^{-2\sigma\xi}\left(\int_{\partial D}\int_{-\infty}^{+\infty}\frac{\delta(t-\xi-|x-y|)}{4\pi|x-y|}f(t,x)dtds(x)\right)g(\xi, y)d\xi dy.
    \end{align*}}
This leads to the adjoint operator $\mathcal{V_{\sigma}}^*: H^{-p+1}_{\sigma}(\mathbb{R}, H^{-1/2}(\partial D))\rightarrow W^{-p}_{\sigma}(\mathbb{R}, L^2(B))$ where
    \begin{align*}
      &(\mathcal{V_{\sigma}}^*f)(\xi, y)=\int_{\partial D}\int_{-\infty}^{+\infty}\frac{\delta(t-\xi-|x-y|)}{4\pi|x-y|}f(t, x)dt ds(x).
    \end{align*}
    Suppose that $\mathcal{V_{\sigma}}^*f=0$, we take its Laplace transform with $s=k+i\sigma$ to obtain that
    \begin{align*}
    \mathcal{L}[\mathcal{V^*_{\sigma}}f](y)=
        &\int_{-\infty}^{+\infty}e^{is\xi}\int_{\partial D}\int_{-\infty}^{+\infty}\frac{\delta(t-\xi-|x-y|)}{4\pi|x-y|}f(t, x)dt ds(x)d\xi\\
        =&\int_{\partial D}\int^{\infty}_{-\infty}\frac{e^{is(t-|x-y|)}}{4\pi |x-y|}f(t, x)dtds(x)\\
        =&\int_{\partial D}\frac{e^{-is|x-y|}}{4\pi |x-y|}\int^{\infty}_{-\infty}e^{ist}f(t,x)dtds(x)=0,\quad y\in B.
    \end{align*}
    By unique continuation and jump relations (following exactly  the proof of { Lemma \ref{thm7}}), $\mathcal{L}[\mathcal{V^*_{\sigma}}f]=0$ implies that 
\(
    \int_{-\infty}^{+\infty}e^{ist}f(t, x)dt=0\) for \(x\in\partial D\).
    Therefore $f(t, x)=0$ and $\mathcal{V_{\sigma}}$ has dense range.

 The injectivity and denseness of range for $\overline{\mathcal{V}}$ can be proved  similarly. The boundedness of operators $\mathcal{A}$ and $\overline{\mathcal{A}}$ follows directly from Lemma \ref{thm4}. The injectivities of $A$ and $\ov{A}$ imply that $\mathcal{A}$ and $\overline{\mathcal{A}}$ are injective. Now we show that the range of $\mathcal{A}$ is dense by showing the injectivity of its adjoint operator $\mathcal{A}^*$. For $\hat{g}\in H^{1/2}(\partial D)$, take the inverse Laplace transform of $A\hat g$ defined in \eqref{eqA}, we have the explicit form of $\mathcal{A}$ by
     \begin{align*}
            (\mathcal{A}g)&(t, x)=\int_{-\infty}^{+\infty}\int_{\partial D}\frac{\partial u_*(t-\xi, y; x)}{\partial \nu(y)}g(\xi, y)ds(y)d\xi,
        \end{align*}
        where $u_*$ is the time domain counterpart of $\hat{u}_*$.
 Again to derive $\mathcal{A}^*$, consider $g\in C^{\infty}_0\left(\mathbb{R}, H^{1/2}(\partial D)\right)$ and $f\in C^{\infty}_0\left(\mathbb{R}, L^2(B)\right)$ where
       \begin{align*}
        \langle\mathcal{A}g, f\rangle_\sigma
        &=\int_B\int_{-\infty}^{+\infty}e^{-2\sigma t}\int_{-\infty}^{+\infty}\int_{\partial D}\frac{\partial u_*(t-\xi,y;x)}{\partial \nu(y)}g(\xi,y)ds(y)d\xi f(t,x)dtdx\\
        =\int_{-\infty}^{+\infty}&\int_{\partial D}\left(\int_B\int_{-\infty}^{+\infty}\frac{\partial u_*(t-\xi,y;x)}{\partial \nu(y)}f(t,x)e^{2\sigma(\xi-t)}dtdx\right)e^{-2\sigma\xi}g(\xi,y)ds(y)d\xi.
        \end{align*}
 This leads to the adjoint operator $\mathcal{A}^*: W^{1/2-p}_{\sigma}\left(\mathbb{R}, L^2(B)\right)\rightarrow H^{-p}_{\sigma}\left(\mathbb{R}, H^{-1/2}(\partial D)\right)$ where 
        \begin{align*}
            &(\mathcal{A}^*f)(\xi, y)=\int_{-\infty}^{+\infty}\int_Be^{2\sigma(\xi-t)}\frac{\partial u_*(t-\xi, y; x)}{\partial \nu(y)}f(t, x)dxdt.
        \end{align*}
        Suppose that $\mathcal{A}^*f=0$, we take its Laplace transform with $s=k+i\sigma$ to obtain that
        \begin{align*}
            \mathcal{L}[\mathcal{A}^*f](y)
            =&\int^{+\infty}_{-\infty}e^{is\xi}\int_B\int^{+\infty}_{-\infty}e^{-2\sigma(t-\xi)}f(t,x)\frac{\partial u_*(t-\xi,y;x)}{\partial \nu(y)}dtdxd\xi\\
            =&\int^{+\infty}_{-\infty}e^{i\overline{s}\xi}\left(\int_B\int^{+\infty}_{-\infty}e^{-2\sigma t}f(t,x)\frac{\partial u_*(t-\xi,y;x)}{\partial \nu(y)}dtdx\right)d\xi\\
            =&\int_B\left(\int^{+\infty}_{-\infty}e^{ist}f(t,x)dt\right)\cdot\left(\int^{+\infty}_{-\infty}e^{i\overline{s}t}\frac{\partial u_*(-t,y;x)}{\partial \nu(y)}dt\right)dx\\
            =&\int_B\left(\int^{+\infty}_{-\infty}e^{ist}f(t,x)dt\right)\cdot\left(\int^{+\infty}_{-\infty}e^{-i\overline{s}t}\frac{\partial u_*(t,y;x)}{\partial \nu(y)}dt\right)dx \\
            =& \int_B\hat{f}(s,x)\frac{\partial \hat{u}_*(-\overline{s},y;x)}{\partial \nu(y)}dx=0,\quad y\in\partial D.
        \end{align*}
        Similar to the proof of Theorem \ref{thm14}, we have that $\hat{f}=0$ and thus $f=0$. The denseness of range for $\overline{\mathcal{A}}$ can be proved in a similar way. From Theorem \ref{thm4} and Theorem \ref{thm17}, we obtain that the above operators are bounded. Finally, $\mathcal{N_{\sigma}}$ and $\overline{\mathcal{N}}$ are injective, bounded and have dense range since $\mathcal{N_{\sigma}}=-\mathcal{AV_{\sigma}}$ and $\overline{\mathcal{N}}=\mathcal{\overline{A}\overline{V}}$.
\end{proof}

Now we prove the following property of the time domain modified imaginary near-field operator $\mathcal{I}:=\mathcal{N_{\sigma}}-\overline{\mathcal{N}}$. 
\begin{theorem}\label{thm19}
    The time domain modified imaginary near-field operator
    \begin{align}
       \nonumber \mathcal{I}: W^{p+3/2}_{\sigma}\left(\mathbb{R}, L^2(B)\right)&\rightarrow W^p_{\sigma}\left(\mathbb{R}, L^2(B)\right)\bigoplus \left(W^*\right)^p_{\sigma}\left(\mathbb{R}, L^2(B)\right)\\
       \label{operator I} \mathcal{I}g&=\left(\mathcal{N_{\sigma}}-\overline{\mathcal{N}}\right)g
    \end{align}
    is injective and has dense range.
\end{theorem}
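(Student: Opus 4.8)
The plan is to transfer the whole question to the Laplace domain and invoke Theorem~\ref{thm14}, exactly as Theorem~\ref{thm18} does for $\mathcal{V}_{\sigma},\mathcal{A},\mathcal{N}_{\sigma}$ and their barred counterparts. The operators $\mathcal{N}_{\sigma}$ and $\overline{\mathcal{N}}$ are convolution operators whose Laplace symbols are $N_{\sigma}$ and $\overline{N}$ from Section~\ref{sec:4}, so $\mathcal{I}=\mathcal{N}_{\sigma}-\overline{\mathcal{N}}$ satisfies $\mathcal{L}[\mathcal{I}g](s)=I\,\mathcal{L}[g](s)$ for every $s=k+i\sigma$, $k\in\mathbb{R}$, where $I=N_{\sigma}-\overline{N}:W(B)\to W(B)\bigoplus W^{*}(B)$ is the Laplace-domain operator of Theorem~\ref{thm14}. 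One first remarks that the target $W^{p}_{\sigma}(\mathbb{R},L^{2}(B))\bigoplus(W^{*})^{p}_{\sigma}(\mathbb{R},L^{2}(B))$ is a genuine direct sum: a function lying in both factors would have, for almost every $s$, a Laplace transform in $W(B)\cap W^{*}(B)=\{0\}$ (since $s^{2}\neq\overline{s}^{2}$, as in the proof of Theorem~\ref{thm14}), hence would vanish. Boundedness of $\mathcal{I}$ with the claimed mapping property is then immediate from Theorem~\ref{thm18}.

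Injectivity is the easy half. If $\mathcal{I}g=0$ then $I\,\mathcal{L}[g](s)=\mathcal{L}[\mathcal{I}g](s)=0$ for all $s=k+i\sigma$, $k\in\mathbb{R}$; since $I$ is injective by Theorem~\ref{thm14}, $\mathcal{L}[g](s)=0$ in $W(B)$ for all such $s$, and hence $g=0$ in $W^{p+3/2}_{\sigma}(\mathbb{R},L^{2}(B))$.

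For the dense range I would follow the scheme used in Theorem~\ref{thm18}: using the explicit kernel representations of $\mathcal{N}_{\sigma}$ and $\overline{\mathcal{N}}$ obtained by inverse Laplace transform, compute the adjoint $\mathcal{I}^{*}=\mathcal{N}_{\sigma}^{*}-\overline{\mathcal{N}}^{*}$, which acts from $W^{-p}_{\sigma}(\mathbb{R},L^{2}(B))\bigoplus(W^{*})^{-p}_{\sigma}(\mathbb{R},L^{2}(B))$ to $W^{-p-3/2}_{\sigma}(\mathbb{R},L^{2}(B))$ and has Laplace symbol the adjoint $I^{*}$ of the Laplace-domain operator $I$; then if $\mathcal{I}^{*}(f,h)=0$, taking Laplace transforms gives $I^{*}\big(\mathcal{L}[f](s),\mathcal{L}[h](s)\big)=0$ for almost every $s=k+i\sigma$, and since $I$ has dense range by Theorem~\ref{thm14} its adjoint is injective, so $\mathcal{L}[f](s)=\mathcal{L}[h](s)=0$ a.e.\ and $(f,h)=0$. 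Equivalently one can bypass the adjoint and argue directly: if $(f,h)$ annihilates $\mathrm{range}(\mathcal{I})$, the Plancherel identity defining $\langle\cdot,\cdot\rangle_{\sigma}$ turns this into $\int_{-\infty+i\sigma}^{\infty+i\sigma}\big\langle I\,\mathcal{L}[g](s),(\mathcal{L}[f](s),\mathcal{L}[h](s))\big\rangle ds=0$ for all $g$, and a localization (Lebesgue-point) argument in $s$, together with the density of $\mathrm{range}(I)$ from Theorem~\ref{thm14}, forces $(\mathcal{L}[f](s),\mathcal{L}[h](s))=0$ for almost every $s$.

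The substantive work — the unique-continuation and jump-relation analysis, and in particular the radiating-versus-nonradiating dichotomy that decouples the $W(B)$- and $W^{*}(B)$-components — has already been done inside Theorem~\ref{thm14}, so the present proof is essentially bookkeeping. The one point that deserves care is checking that the Laplace transform genuinely intertwines $\mathcal{I}$ with $I$ componentwise on the direct sum, i.e.\ that the conjugations $s\mapsto-\overline{s}$ concealed in $\overline{\mathcal{N}}$ and in the weighted pairing $\langle\cdot,\cdot\rangle_{\sigma}$ are accounted for consistently; this is handled exactly as in the adjoint computations in the proof of Theorem~\ref{thm18}, and I do not anticipate any genuinely new difficulty.
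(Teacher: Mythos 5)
Your proposal is correct and follows essentially the same route as the paper: injectivity is obtained by taking the Laplace transform and invoking the injectivity of $I=N_{\sigma}-\overline{N}$ from Theorem~\ref{thm14}, and dense range is obtained by computing the adjoint $\mathcal{I}^{*}$ from the explicit time-domain kernels, Laplace-transforming $\mathcal{I}^{*}(f,h)=0$, and reducing to the injectivity of $I^{*}$ established inside the proof of Theorem~\ref{thm14}. The only cosmetic difference is that you state the intertwining and direct-sum checks up front, whereas the paper carries them out inline; no substantive gap.
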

\begin{proof}
     We firstly prove that $\mathcal{I}$ is injective. Let $\mathcal{I}g=0$, then $\mathcal{L}[\mathcal{I}g]=0$ for all $s=k+i\sigma, k\in\mathbb{R}$. By the injective result  on $I$ in Theorem \ref{thm14}, we have $\mathcal{L}[g]=0$ in $W(B)$. Hence, $g=0$ in $W(B)\bigoplus W^*(B)$.
    To show the denseness of range, we rewrite $\mathcal{I}$ as
    {\footnotesize
    \begin{align*}
        &[\mathcal{I}g](t,x)=-[\mathcal{AV_{\sigma}}g](t,x)+[\overline{\mathcal{A}}\overline{\mathcal{V}}g](t,x)\\
        =&-\int^{+\infty}_{-\infty}\int_{\partial D}\frac{\partial u_*(t-\tau, x; y)}{\partial \nu(y)}\int^{+\infty}_{-\infty}\int_B \frac{\delta(\tau-\xi-|y-z|)}{4\pi|y-z|}e^{2\sigma(\tau-\xi)}g(\xi, z) dzd\xi ds(y)d\tau\\
        +&\int^{+\infty}_{-\infty}\int_{\partial D}\frac{\partial u_*(-(t-\tau), x; y)}{\partial \nu(y)}e^{2\sigma(t-\tau)}\int^{+\infty}_{-\infty}\int_B\frac{\delta(\tau-\xi+|y-z|)}{4\pi|y-z|}e^{2\sigma(\tau-\xi)}g(\xi, z)dzd\xi ds(y)d\tau,
    \end{align*}}
and its adjoint operator $\mathcal{I}^*$ as
    {\footnotesize
    \begin{align*}
    &~~\mathcal{L}[\mathcal{I^*}(f, h)](\xi,z)\\
    =&-\int^{+\infty}_{-\infty}\int_B\int^{+\infty}_{-\infty}\int_{\partial D}e^{-2\sigma t}\frac{\partial u_*(t-\tau,x;y)}{\partial \nu(y)}\frac{\delta(\tau-\xi-|y-z|)}{4\pi|y-z|}e^{2\sigma\tau}f(t,x)ds(y)d\tau dxdt\\
    &+\int^{+\infty}_{-\infty}\int_B\int^{+\infty}_{-\infty}\int_{\partial D}\frac{\partial u_*(\tau-t,x;y)}{\partial \nu(y)}\frac{\delta(\tau-\xi+|y-z|)}{4\pi|y-z|}h(t,x)ds(y)d\tau dxdt.
    \end{align*}}
Here $\mathcal{I}^*$ is from $W^{-p}_{\sigma}\left(\mathbb{R}, L^2(B)\right)\bigoplus \left(W^*\right)^{-p}_{\sigma}\left(\mathbb{R}, L^2(B)\right)$ to $W^{-p-3/2}_{\sigma}\left(\mathbb{R}, L^2(B)\right)$.
Suppose that $\mathcal{I^*}(f, h)=0$ and take its Laplace transform with $s=k+i\sigma$, we have that
    {\footnotesize
    \begin{align*}
        \mathcal{L}[\mathcal{I^*}(f, h)](z)=&-\int^{+\infty}_{-\infty}\int_B\int^{+\infty}_{-\infty}\int_{\partial D}\frac{e^{is(\tau-|y-z|)}}{4\pi|y-z|}\frac{\partial u_*(t-\tau,x;y)}{\partial \nu(y)}f(t,x)e^{-2\sigma(t-\tau)}ds(y)d\tau dxdt\\
        &+\int^{+\infty}_{-\infty}\int_B\int^{+\infty}_{-\infty}\int_{\partial D}\frac{e^{is(\tau+|y-z|)}}{4\pi|y-z|}\frac{\partial u_*(\tau-t,x;y)}{\partial \nu(y)}h(t,x)ds(y)d\tau dxdt\\
        =&-\int_{\partial D}\frac{e^{-is|y-z|}}{4\pi|y-z|}\left(\int^{+\infty}_{-\infty}e^{i\overline{s}\tau}\int^{+\infty}_{-\infty}\int_B\frac{\partial u_*(t-\tau,x;y)}{\partial \nu(y)}f(t,x)e^{-2\sigma t}dxdtd\tau\right)ds(y)\\
        &+\int_{\partial D}\frac{e^{is|y-z|}}{4\pi|y-z|}\left(\int^{+\infty}_{-\infty}e^{is\tau}\int^{+\infty}_{-\infty}\frac{\partial u_*(\tau-t,x;y)}{\partial \nu(y)}h(t,x)dxdtd\tau\right)ds(y)\\
        =&-\int_{\partial D}\frac{e^{-is|y-z|}}{4\pi|y-z|}\int_B\hat{f}(s,x)\frac{\partial \hat{u}_*(-\overline{s},y;x)}{\partial \nu(y)}dxds(y)\\
        &+\int_{\partial D}\frac{e^{is|y-z|}}{4\pi|y-z|}\int_B\hat{h}(s,x)\frac{\partial \hat{u}_*(s,x;y)}{\partial \nu(y)}dxds(y)\\
        =&0,
    \end{align*}}
    for $z\in B$.
    Similar to the proof in Theorem \ref{thm14}, we have that $\mathcal{I^*}$ is injective and thus $\mathcal{I}$ has dense range.   
\end{proof}

\subsection{Time domain linear sampling method for passive imaging}
In this subsection, we introduce the time domain linear sampling method for passive imaging. We first define the modified product volume operator $\mathbb{V}$ and show its properties in the following theorem.
\begin{theorem}\label{thm20}
The modified product volume operator
\begin{align*}
    \mathbb{V}: W^p_{\sigma}\left(\mathbb{R}, L^2(B)\right)\rightarrow& H^{p-1}_{\sigma}\left(\mathbb{R}, H^{1/2}(\partial D)\right)\times H^{p-1}_{\sigma}\left(\mathbb{R}, H^{1/2}(\partial D)\right)\\
    \mathbb{V} f&=\left( \overline{\mathcal{V}}f, \mathcal{V_{\sigma}}f  \right)
\end{align*}
is injective and has dense range.
\end{theorem}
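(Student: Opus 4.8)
The plan is to establish the two assertions separately, in both cases reducing to the Laplace domain exactly as in the proofs of Theorems~\ref{thm18} and~\ref{thm19}. Injectivity of $\mathbb{V}$ is immediate: if $\mathbb{V}f=0$ then in particular $\mathcal{V}_{\sigma}f=0$, so $f=0$ by the injectivity of $\mathcal{V}_{\sigma}$ established in Theorem~\ref{thm18}; boundedness of $\mathbb{V}$ likewise follows from that of its two components. Since $\mathbb{V}$ is bounded, its range is dense if and only if the adjoint $\mathbb{V}^{*}(\phi,\psi)=\overline{\mathcal{V}}^{*}\phi+\mathcal{V}_{\sigma}^{*}\psi$, acting from $H^{1-p}_{\sigma}(\mathbb{R},H^{-1/2}(\partial D))\times H^{1-p}_{\sigma}(\mathbb{R},H^{-1/2}(\partial D))$ into $W^{-p}_{\sigma}(\mathbb{R},L^{2}(B))$, is injective. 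Writing out $\overline{\mathcal{V}}^{*}$ and $\mathcal{V}_{\sigma}^{*}$ explicitly and taking the Fourier--Laplace transform, as was done for $\mathcal{V}_{\sigma}^{*}$ in Theorem~\ref{thm18}, one sees that $\mathcal{L}[\mathbb{V}^{*}(\phi,\psi)](s,\cdot)$ is the restriction to $B$ of the single-layer potential $S_{s}[\widehat{\phi}]+S_{-s}[\widehat{\psi}]$ over $\partial D$, where $S_{s}$ and $S_{-s}$ denote the single-layer potentials with kernels $\widehat{\Phi}_{s}$ and $\widehat{\Phi}_{-s}$ (and $\widehat{\phi},\widehat{\psi}$ abbreviate $\widehat{\phi}(s,\cdot),\widehat{\psi}(s,\cdot)\in H^{-1/2}(\partial D)$); both satisfy $\Delta u+s^{2}u=0$ away from $\partial D$, since $(-s)^{2}=s^{2}$. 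Hence $\mathbb{V}^{*}(\phi,\psi)=0$ forces $S_{s}[\widehat{\phi}]+S_{-s}[\widehat{\psi}]=0$ in $W(B)$ for almost every $s$ on the line $\mathrm{Im}\,s=\sigma$.

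Then, for each such $s$, I would argue along the lines of Lemma~\ref{thm7} and Theorem~\ref{thm14}. First, the unique continuation principle extends this identity from $B$ to the connected set $\mathbb{R}^{3}\setminus\overline{D}$. Since $S_{s}[\widehat{\phi}]$ is the single-layer potential with the exponentially decaying kernel $\widehat{\Phi}_{s}$, it lies in $H^{1}_{s}(\mathbb{R}^{3}\setminus\overline{D})$; hence so does $S_{-s}[\widehat{\psi}]=-S_{s}[\widehat{\phi}]$. But $S_{-s}[\widehat{\psi}]$ is built from the exponentially growing kernel $\widehat{\Phi}_{-s}$ and thus represents a nonphysical wave; as in the concluding step of the proof of Theorem~\ref{thm14}, a single-layer potential of this type with finite energy must vanish -- its far-field pattern is forced to be zero (otherwise it would grow like $e^{|\mathrm{Im}\,s|\,|x|}/|x|$), and Rellich's lemma (valid for the complex wavenumber $-s$) then gives $S_{-s}[\widehat{\psi}]=0$ in $\mathbb{R}^{3}\setminus\overline{D}$, and therefore $S_{s}[\widehat{\phi}]=0$ there as well. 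Continuity of single-layer potentials across $\partial D$, combined with the fact that $s^{2}$ is not a Dirichlet eigenvalue of $-\Delta$ in $D$ (because $\mathrm{Im}\,s\neq0$), then propagates both identities into $D$, so that $S_{s}[\widehat{\phi}]$ and $S_{-s}[\widehat{\psi}]$ vanish identically on $\mathbb{R}^{3}$; the jump relation for the conormal derivative finally gives $\widehat{\phi}=0$ and $\widehat{\psi}=0$. As this holds for almost every $s$ on the line, $\phi=\psi=0$ in the time domain, so $\mathbb{V}^{*}$ is injective and $\mathbb{V}$ has dense range.

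I expect the delicate point to be the step that decouples the two single-layer contributions -- concluding that each of $S_{s}[\widehat{\phi}]$ and $S_{-s}[\widehat{\psi}]$ vanishes separately from the fact that their sum does. This cannot be obtained from the jump relations alone (which would only give $\widehat{\phi}+\widehat{\psi}=0$); it requires exploiting the incompatibility between the radiating and the non-radiating asymptotics, i.e.\ the ``physical versus nonphysical'' dichotomy already used in Theorem~\ref{thm14}, and one must check that the underlying ingredients -- the exponential growth/decay dichotomy for the two fundamental solutions and Rellich's lemma -- remain valid for the complex wavenumber $s$ with $\mathrm{Im}\,s=\sigma>0$ rather than for a real frequency.
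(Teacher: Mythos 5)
Your proposal is correct and follows essentially the same route as the paper: injectivity componentwise from Theorem~\ref{thm18}, and dense range by computing the adjoint $\mathbb{V}^{*}$, taking the Fourier--Laplace transform to reduce to the vanishing of $S_{s}[\widehat{\phi}]+S_{-s}[\widehat{\psi}]$ on $B$, and then invoking unique continuation, the radiating/non-radiating dichotomy, and the jump relations --- which is exactly the argument the paper compresses into ``Following the proof of Theorem~\ref{thm14}.'' The only difference is one of exposition: you make explicit the decoupling step (forcing the far-field coefficient of the exponentially growing potential to vanish before propagating into $D$), which the paper leaves implicit by reference to Theorem~\ref{thm14}.
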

\begin{proof}
The injectivity can be proved from the definition of $\mathbb{V}$ and Theorem \ref{thm18} directly.  The dense range of $\mathbb{V}$ can be given by showing the injectivity of its adjoint operator $\mathbb{V}^*$. For $\hat{f}\in W(B)$, take the inverse Laplace transform of $\overline{V}\hat f$ defined in \eqref{VV} and
$V_{\sigma}\hat f$ defined in \eqref{vsigma}, we have
\ben 
\left(\overline{\mathcal{V}}f\right)(t,x)=\int_{-\infty}^{+\infty}\int_Be^{2\sigma(t-\xi)}\frac{\delta (\xi-t-|y-x|)}{4\pi|y-x|}f(\xi, y)dyd\xi,
\enn
and
\ben
\left(\mathcal{V_{\sigma}}f \right)(t,x)=\int_{-\infty}^{+\infty}\int_Be^{2\sigma(t-\xi)}\frac{\delta(t-\xi-|x-y|)}{4\pi|x-y|}f(\xi, y)dyd\xi.
\enn
Thus, we get the explicit form of $\mathbb{V}$. 
Consider its adjoint operator
    \begin{align*}
         \mathbb{V^*}&: H^{1-p}_{\sigma}\left(\mathbb{R}, H^{-1/2}(\partial D)\right)\times H^{1-p}_{\sigma}\left(\mathbb{R}, H^{-1/2}(\partial D)\right)\rightarrow W^{-p}_{\sigma}\left(\mathbb{R}, L^2(B)\right)\\
         &[\mathbb{V^*}(g, h)](\xi,y)=\int_{\partial D}\int_{-\infty}^{+\infty}\frac{\delta(\xi-t-|x-y|)}{4\pi|x-y|}g(t,x)dt ds(x)\\
         &~~~~~~~~~~~~~~~~~~~+\int_{\partial D}\int_{-\infty}^{+\infty}\frac{\delta(t-\xi-|x-y|)}{4\pi|x-y|}h(t,x)dt ds(x)
    \end{align*}
with $g\in H^{1-p}_{\sigma}\left(\mathbb{R}, H^{-1/2}(\partial D)\right)$ and $h\in H^{1-p}_{\sigma}\left(\mathbb{R}, H^{-1/2}(\partial D)\right)$. Assume that \\$\mathbb{V^*}(g, h)=0$ and take its Laplace transform with $s=k+i\sigma,\ k\neq 0$, we have
\begin{align*}
0&=\mathcal{L}[\mathbb{V^*}(g, h)](s,y)\\
        &=\int_{\partial D}\frac{e^{is|x-y|}}{4\pi|x-y|}\int_{-\infty}^{+\infty}g(t, x)e^{ist}dtds(x)+\int_{\partial D}\frac{e^{-is|x-y|}}{4\pi|x-y|}\int_{-\infty}^{+\infty}h(t, x)e^{ist}dtds(x)\\
        &=\int_{\partial D}\frac{e^{is|x-y|}}{4\pi|x-y|}\hat{g}(s,x)ds(x)+\int_{\partial D}\frac{e^{-is|x-y|}}{4\pi|x-y|}\hat{h}(s,x)ds(x),\quad y\in B.
    \end{align*}
    Following the proof of Theorem \ref{thm14}, it follows that both $g(t,x)$ and $h(t,x)$ vanish in $H^{1-p}_\sigma(\mathbb{R}, H^{-1/2}(\partial D))$. Therefore, the injectivity of $\mathbb{V^*}$ implies that $\mathbb{V}$ has dense range. This completes the proof.
\end{proof}

Now we present the main result in this paper, the time domain linear sampling method that determines the support $D$ using directly the time domain measurements. For any sampling point $z\in\R^3$,  let $\zeta\in C_c^{\infty}(\mathbb{R})$ be a smooth function and $\tau\in\mathbb{R}$, we define a family of monopole test functions $\phi_{z, \tau}$ by
\begin{equation}\label{test function}
    \phi_{z, \tau}(t, x):=
    \left[\zeta(\cdot)*{\color{lxl}\Phi}(\cdot,x;z)\right](t-\tau),\quad(t, x)\in\R\times\left\{\mathbb{R}^3\backslash\{z\}\right\}.
\end{equation}
This test function is a time domain point source at $z$; here $\tau$ is a time shift. We have the following result.
\begin{theorem}\label{thm21}
Let $\sigma>0$, $\epsilon>0$ and $\tau\in\mathbb{R}$.
\begin{itemize}
\item[(a)] 
    If $z\in D$, then there exists   $g_{z, \tau, \epsilon}\in W^{3/2}_{\sigma}\left(\mathbb{R}, L^2(B)\right)$ such that
    \ben\label{eqI1}
    \lim_{\epsilon\rightarrow 0}\Vert \mathcal{I}g_{z, \tau, \epsilon}-\phi_{z, \tau}\Vert_{H^0_{\sigma}(\mathbb{R}, L^2(B))}= 0.
    \enn
    Moreover, it holds that
    \begin{align*}
    \lim_{\epsilon\rightarrow 0}\Vert \mathcal{V_{\sigma}}g_{z, \tau, \epsilon}\Vert_{H^{1/2}_{\sigma}(\mathbb{R}, H^{1/2}(\partial D))}<\infty \quad \text{and}\quad 
        \lim_{\epsilon\rightarrow0}\Vert \overline{\mathcal{V}}g_{z, \tau, \epsilon}\Vert_{H^{1/2}_{\sigma}(\mathbb{R}, H^{1/2}(\partial D))}\textless\infty.
    \end{align*}
\item[(b)] 
    If $z\notin D$, then for every $g_{z, \tau, \epsilon}\in W^{3/2}_{\sigma}(\mathbb{R}, L^2(B))$ with 
    \be\label{eq:I}
    \lim_{\epsilon\rightarrow 0}\Vert \mathcal{I}g_{z, \tau, \epsilon}-\phi_{z, \tau}\Vert_{H^0_{\sigma}(\mathbb{R}, L^2(B))}=0,
    \en
    it holds that $\lim_{\epsilon\rightarrow0}\Vert g_{z, \tau, \epsilon}\Vert_{W^{3/2}_{\sigma}(\mathbb{R}, L^2(B))}=\infty$ and
{        \begin{align*}
            \lim_{\epsilon\rightarrow 0}\Vert \mathcal{V_{\sigma}}g_{z, \tau, \epsilon}\Vert_{H^{1/2}_{\sigma}(\mathbb{R}, H^{1/2}(\partial D))}=\infty\quad &\text{or}\quad \lim_{\epsilon\rightarrow 0}\Vert \overline{\mathcal{V}}g_{z, \tau, \epsilon}\Vert_{H^{1/2}_{\sigma}(\mathbb{R}, H^{1/2}(\partial D))}=\infty.
        \end{align*}}
\end{itemize}
\end{theorem}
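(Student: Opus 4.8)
The plan is to mirror the standard linear sampling argument, now in the time-domain/Laplace-transform functional framework built in Sections~\ref{sec:4}--\ref{sec:5}, using the factorization $\mathcal{I} = \mathcal{N}_\sigma - \overline{\mathcal{N}} = -\mathcal{A}\mathcal{V}_\sigma + \overline{\mathcal{A}}\,\overline{\mathcal{V}}$ together with the range characterization of $\mathcal{A}$ implied by Lemma~\ref{thm10}. The first step is to identify the time-domain analogue of the range characterization: since $\widehat{\phi_{z,\tau}}(s,\cdot) = \widehat{\zeta}(s)e^{is\tau}\widehat{\Phi}_s(\cdot;z)$, Lemma~\ref{thm10} says $\widehat{\phi_{z,\tau}}(s,\cdot)|_B \in \mathrm{range}(A)$ for every $s = k+i\sigma$ if and only if $z\in D$; moreover when $z\in D$ the preimage under $A$ is the explicit boundary datum $\widehat{\zeta}(s)e^{is\tau}\widehat{\Phi}_s(\cdot;z)|_{\partial D} \in H^{1/2}(\partial D)$, which depends analytically on $s$ and has the right $|s|$-growth to define an element $\psi_{z,\tau}\in H^{p}_\sigma(\mathbb R, H^{1/2}(\partial D))$ with $\mathcal{A}\psi_{z,\tau} = \phi_{z,\tau}$.

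For part (a), assuming $z\in D$: the idea is to first invert $\mathcal{A}$ exactly — $\mathcal{A}\psi_{z,\tau} = \phi_{z,\tau}$ as above — and then use the dense range of $\mathcal{I}$ (Theorem~\ref{thm19}) to approximate. Concretely I would proceed by a Tikhonov-type/approximation argument: I need a $g_{z,\tau,\epsilon}$ with $\mathcal{I}g_{z,\tau,\epsilon}\to\phi_{z,\tau}$ while simultaneously controlling $\mathcal{V}_\sigma g_{z,\tau,\epsilon}$ and $\overline{\mathcal{V}}g_{z,\tau,\epsilon}$. The natural route is: since $\mathcal{V}_\sigma$ and $\overline{\mathcal{V}}$ are injective with dense range (Theorem~\ref{thm18}) and, by Lemma~\ref{thm11} in the Laplace domain, one expects $\mathrm{range}(\mathcal{V}_\sigma)$ to be dense in $H^{p-1}_\sigma(\mathbb R,H^{1/2}(\partial D))$, pick $g_{z,\tau,\epsilon}\in W^{3/2}_\sigma(\mathbb R,L^2(B))$ with $\|\mathcal{V}_\sigma g_{z,\tau,\epsilon} - \psi_{z,\tau}\|_{H^{1/2}_\sigma(\mathbb R,H^{1/2}(\partial D))}\le\epsilon$ and similarly $\|\overline{\mathcal{V}}g_{z,\tau,\epsilon}\|$ bounded; then $\|\mathcal{I}g_{z,\tau,\epsilon} - \phi_{z,\tau}\| = \|-\mathcal{A}(\mathcal{V}_\sigma g_{z,\tau,\epsilon} - \psi_{z,\tau}) + \overline{\mathcal{A}}\,\overline{\mathcal{V}}g_{z,\tau,\epsilon}\|$; the first term is $\le C\epsilon$ by boundedness of $\mathcal{A}$. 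The subtlety here is making the second term $\overline{\mathcal{A}}\,\overline{\mathcal{V}}g_{z,\tau,\epsilon}$ small too — this requires choosing $g_{z,\tau,\epsilon}$ so that $\overline{\mathcal{V}}g_{z,\tau,\epsilon}\to 0$ as well, i.e. using that the \emph{pair} $(\overline{\mathcal{V}},\mathcal{V}_\sigma) = \mathbb{V}$ has dense range (Theorem~\ref{thm20}), so I can approximate the target $(0,\psi_{z,\tau})$ in $H^{p-1}_\sigma\times H^{p-1}_\sigma$ by $\mathbb{V}g_{z,\tau,\epsilon}$. That delivers $\mathcal{I}g_{z,\tau,\epsilon}\to\phi_{z,\tau}$ and both $\|\mathcal{V}_\sigma g_{z,\tau,\epsilon}\|$, $\|\overline{\mathcal{V}}g_{z,\tau,\epsilon}\|$ bounded (the former $\to\|\psi_{z,\tau}\|<\infty$, the latter $\to 0$).

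For part (b), assume $z\notin D$ and suppose, for contradiction, that there is a sequence $g_{z,\tau,\epsilon}$ with \eqref{eq:I} holding and (at least along a subsequence) $\|\mathcal{V}_\sigma g_{z,\tau,\epsilon}\|_{H^{1/2}_\sigma}$ and $\|\overline{\mathcal{V}}g_{z,\tau,\epsilon}\|_{H^{1/2}_\sigma}$ both bounded. Passing to weakly convergent subsequences $\mathcal{V}_\sigma g_{z,\tau,\epsilon}\rightharpoonup \psi$ and $\overline{\mathcal{V}}g_{z,\tau,\epsilon}\rightharpoonup \overline\psi$, boundedness of $\mathcal{A},\overline{\mathcal{A}}$ gives $-\mathcal{A}\mathcal{V}_\sigma g_{z,\tau,\epsilon} + \overline{\mathcal{A}}\,\overline{\mathcal{V}}g_{z,\tau,\epsilon}\rightharpoonup -\mathcal{A}\psi + \overline{\mathcal{A}}\,\overline\psi$, which by \eqref{eq:I} must equal $\phi_{z,\tau}$. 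Looking at the $W^p_\sigma(\mathbb R,L^2(B))$ and $(W^*)^p_\sigma(\mathbb R,L^2(B))$ components separately in the direct sum $W(B)\oplus W^*(B)$ (recall Theorem~\ref{thm14} established this is genuinely a direct sum for each fixed $s$), and using that $\phi_{z,\tau}$ lies in (the time-domain analogue of) $W(B)$, forces $-\mathcal{A}\psi = \phi_{z,\tau}$, i.e. $\phi_{z,\tau}\in\mathrm{range}(\mathcal{A})$; applying this fiberwise in $s$ and invoking Lemma~\ref{thm10} contradicts $z\notin D$. Hence no bounded sequence exists: $\|\mathcal{V}_\sigma g_{z,\tau,\epsilon}\|\to\infty$ or $\|\overline{\mathcal{V}}g_{z,\tau,\epsilon}\|\to\infty$, and since $\mathcal{V}_\sigma$ and $\overline{\mathcal{V}}$ are bounded, this in turn forces $\|g_{z,\tau,\epsilon}\|_{W^{3/2}_\sigma(\mathbb R,L^2(B))}\to\infty$.

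The main obstacle I expect is the bookkeeping that transfers the fiberwise (fixed-$s$) Laplace-domain range characterization (Lemma~\ref{thm10}) into a statement about the time-domain operator $\mathcal{A}$ and its range, together with verifying that the candidate boundary datum $\psi_{z,\tau}$ actually lies in the correct weighted space $H^{p}_\sigma(\mathbb R, H^{1/2}(\partial D))$ — this needs a Paley--Wiener-type argument controlling the $s$-dependence of $\widehat\zeta(s)e^{is\tau}\widehat\Phi_s(\cdot;z)|_{\partial D}$, using the smooth compact support of $\zeta$ and $\mathrm{dist}(z,\partial D)>0$ when $z\in D$. A secondary technical point is justifying the weak-limit/direct-sum argument in part (b) at the level of the time-domain spaces rather than fixed frequency; this should follow from the isometry between $H^p_\sigma(\mathbb R,X)$ and the $L^2$-space of $X$-valued functions on the line $\mathrm{Im}\,s=\sigma$, reducing everything to an $s$-a.e. statement.
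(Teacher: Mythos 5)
Your proposal is correct and follows essentially the same route as the paper: part (a) uses the dense range of the product operator $\mathbb{V}$ (Theorem \ref{thm20}) to approximate the pair $(0,-\phi_{z,\tau}|_{\partial D})$ so that $\overline{\mathcal{V}}g_{z,\tau,\epsilon}\to 0$ and $\mathcal{V}_\sigma g_{z,\tau,\epsilon}\to-\phi_{z,\tau}|_{\partial D}$ simultaneously, then applies the boundedness of $\mathcal{A},\overline{\mathcal{A}}$; part (b) extracts weak limits $f,h$, obtains $-\mathcal{A}f+\overline{\mathcal{A}}h=\phi_{z,\tau}$, separates the two terms by their radiating behavior (your direct-sum phrasing is the same mechanism), and contradicts Lemma \ref{thm10} fiberwise in $s$, after which boundedness of $\mathcal{V}_\sigma,\overline{\mathcal{V}}$ forces $\Vert g_{z,\tau,\epsilon}\Vert\to\infty$. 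The only deviation is an immaterial sign on the target boundary datum in part (a).
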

\begin{proof}
    (a) Let $z\in D$, it follows that $\phi_{z, \tau}|_{\partial D}(t, \cdot)\in H^{1/2}_{\sigma}(\mathbb{R}, H^{1/2}(\partial D))$. Then we introduce $\tilde{\phi}:=\left(0,-\phi_{z, \tau}|_{\partial D}(t, \cdot)\right)\in { H^{1/2}_{\sigma}\left(\mathbb{R}, H^{1/2}(\partial D)\right)\times H^{1/2}_{\sigma}\left(\mathbb{R}, H^{1/2}(\partial D)\right)}$. Given any $\epsilon>0$, from Theorem \ref{thm20}, there exists a sequence $g_{z, \tau, \epsilon}\in W^{3/2}_{\sigma}\left(\mathbb{R}, L^2(B)\right)$ such that
    \ben
     \left\|\mathbb{V} g_{z, \tau, \epsilon}-\tilde{\phi}\right\|_{{ H^{1/2}_{\sigma}\left(\mathbb{R}, H^{1/2}(\partial D)\right)\times H^{1/2}_{\sigma}\left(\mathbb{R}, H^{1/2}(\partial D)\right)}}<
    \frac{\epsilon}{2\ \mathrm{max}\{\Vert \mathcal{A}\Vert, \Vert \overline{\mathcal{A}}\Vert\}+1}. 
    \enn
Then it follows that
    \begin{equation*}
        \Vert \overline{\mathcal{V}}g_{z, \tau, \epsilon}+0\Vert_{H^{1/2}_{\sigma}(\mathbb{R}, H^{1/2}(\partial D))}\textless \ \frac{\epsilon}{2\ \mathrm{max}\{\Vert \mathcal{A}\Vert, \Vert \overline{\mathcal{A}}\Vert\}+1},
    \end{equation*}
    and
    \begin{equation*}
        \Vert \mathcal{V_{\sigma}}g_{z, \tau, \epsilon}+\phi_{z, \tau}|_{\partial D}(t, \cdot)\Vert_{H^{1/2}_{\sigma}(\mathbb{R}, H^{1/2}(\partial D))}\textless \ \frac{\epsilon}{2\ \mathrm{max}\{\Vert \mathcal{A}\Vert, \Vert \overline{\mathcal{A}}\Vert\}+1}.
    \end{equation*}
  Due to $\mathcal{I}=-\mathcal{AV_{\sigma}}+\ov{\mathcal{A}}\ov{\mathcal{V}}$ and the properties of $\mathcal{A}$ and $\ov{\mathcal{A}}$ in Theorem \ref{thm18}, it follows that
    \begin{align*}
        \Vert \mathcal{I}g_{z, \tau, \epsilon}-&\phi_{z, \tau}\Vert_{H^0_{\sigma}(\mathbb{R}, L^2(B))}\textless \epsilon;
    \end{align*}
moreover, $\mathcal{V_{\sigma}}g_{z, \tau, \epsilon}$ and $\overline{\mathcal{V}}g_{z, \tau, \epsilon}$ remain bounded in ${H^{1/2}_{\sigma}(\mathbb{R}, H^{1/2}(\partial D))}$ as $\epsilon\rightarrow0$. 
 
(b) Let $z\notin D$, for any $g_{z, \tau, \epsilon}\in W^{3/2}_{\sigma}\left(\mathbb{R}, L^2(B)\right)$ that satisfies \eqref{eq:I}, suppose that 
\ben
\Vert \mathcal{V_{\sigma}}g_{z, \tau, \epsilon}\Vert_{H^{1/2}_{\sigma}(\mathbb{R}, H^{1/2}(\partial D))}< M\quad\text{and}\quad\Vert \overline{\mathcal{V}}g_{z, \tau, \epsilon}\Vert_{H^{1/2}_{\sigma}(\mathbb{R}, H^{1/2}(\partial D))}< M\quad\text{as}\quad\epsilon\rightarrow 0.
\enn
Choose a sequence $\{\epsilon_n\}$ such that $\epsilon_n\rightarrow 0$ as $n\rightarrow \infty$, then $\left\{\mathcal{V_{\sigma}}g_{z, \tau, \epsilon_n}\right\}$ is bounded. Thus, there exists a subsequence $\{\epsilon_{n_m}\}$ such that $\mathcal{V_{\sigma}}g_{z, \tau, \epsilon_{n_m}}$ is weakly convergent to a function in $H^{1/2}_{\sigma}(\mathbb{R}, H^{1/2}(\partial D))$ as $m\rightarrow\infty$, denote by $f$. 
Similarly for $\{\epsilon_{n_m}\}$ and $\left\{\overline{\mathcal{V}}g_{z, \tau, \epsilon_{n_m}}\right\}$, there exists a subsequence $\{\epsilon_{n_{m_l}}\}$ such that $\overline{\mathcal{V}}g_{z, \tau, \epsilon_{n_{m_l}}}$ is weakly convergent to a function in $H^{1/2}_{\sigma}(\mathbb{R}, H^{1/2}(\partial D))$ as $l\rightarrow\infty$, denote by $h$.
Together with the definition of $\mathcal{I}$, it follows that $\mathcal{-A}f+\overline{\mathcal{A}}h=\phi_{z, \tau}$ in $H^0_{\sigma}(\mathbb{R}, L^2(B))$. Taking the Laplace transform yields that
\ben
-A\hat{f}+\overline{A}\hat{h}=\mathrm{exp}(-\sigma\tau)\hat{\zeta}(s)\hat{\Phi}_s(\cdot; z)\quad\text{in}\quad B
\enn
for almost every $s=k+i\sigma, k\in\mathbb{R}$. The different radiating behavior of $A\hat{f}$ and $\overline{A}\hat{h}$ leads to that $-A\hat{f}=\mathrm{exp}(-\sigma\tau)\hat{\zeta}(s)\hat{\Phi}_s(\cdot; z)$ in $B$; this contradicts Lemma \ref{thm10} since $z\notin D$. This allows to show that either $\Vert \mathcal{V_{\sigma}}g_{z, \tau, \epsilon}\Vert_{H^{1/2}_{\sigma}(\mathbb{R}, H^{1/2}(\partial D))}$ or $\Vert \overline{\mathcal{V}}g_{z, \tau, \epsilon}\Vert_{H^{1/2}_{\sigma}(\mathbb{R}, H^{1/2}(\partial D))}$ approaches to infinity. From Theorem \ref{thm18}, the operators $\mathcal{V_{\sigma}}$ and $\overline{\mathcal{V}}$ are bounded, then it follows that $\Vert g_{z, \tau, \epsilon}\Vert_{W^{3/2}_{\sigma}(\mathbb{R}, L^2(B))}\rightarrow \infty$ as $\epsilon \to 0$. This completes the proof.
\end{proof}

    We  remark that the operator $\mathcal{I}$ is a modified time domain operator in similar spirit to \cite{CakoniHaddarLechleiter2019}, which allows to develop the mathematical theory of the linear sampling method; moreover setting $\sigma=0$ directly allows to implement the numerical algorithm. We obtain the approximation of $\Vert \mathcal{V_\sigma}g_{z,\tau,\epsilon}\Vert_{H^{1/2}_\sigma(\mathbb{R},H^{1/2}(\partial D))}$ or $\Vert \mathcal{\overline{V}}g_{z,\tau,\epsilon}\Vert_{H^{1/2}_\sigma(\mathbb{R},H^{1/2}(\partial D))}$ and it is enough for us to illustrate the linear sampling method in our numerical experiments. This is an inherited method in the linear sampling method.

    We also remark that Theorem \ref{thm21} also holds if we replace $\mathcal{I}$ by the volume near field operator $\mathcal{N}$.  We state the result in Corollary \ref{thm24}. Later on we will provide several numerical experiments to test the linear sampling method using this operator $\mathcal{N}$.

\begin{corollary}\label{thm24}
Let $\sigma>0$, $\epsilon>0$ and $\tau\in\mathbb{R}$.
\begin{itemize}
\item[(a)] 
    If $z\in D$, then there exists  $g_{z, \tau, \epsilon}\in W^{3/2}_{\sigma}\left(\mathbb{R}, L^2(B)\right)$ such that
    \be\label{eq:N1}
    \lim_{\epsilon\rightarrow 0}\Vert \mathcal{N}g_{z, \tau, \epsilon}-\phi_{z, \tau}\Vert_{H^0_{\sigma}(\mathbb{R}, L^2(B))}= 0.
    \en
    Moreover, it holds that
    \begin{align*}
    \lim_{\epsilon\rightarrow 0}\Vert \mathcal{V_{\sigma}}g_{z, \tau, \epsilon}\Vert_{H^{1/2}_{\sigma}(\mathbb{R}, H^{1/2}(\partial D))}<\infty \quad \text{and}\quad 
        \lim_{\epsilon\rightarrow0}\Vert \overline{\mathcal{V}}g_{z, \tau, \epsilon}\Vert_{H^{1/2}_{\sigma}(\mathbb{R}, H^{1/2}(\partial D))}\textless\infty.
    \end{align*}   
\item[(b)] 
    If $z\notin D$, then for every $g_{z, \tau, \epsilon}\in W^{3/2}_{\sigma}(\mathbb{R}, L^2(B))$ with \eqref{eq:N1}, it holds that 
\begin{align*}
            \lim_{\epsilon\rightarrow0}\Vert g_{z, \tau, \epsilon}\Vert&_{W^{3/2}_{\sigma}(\mathbb{R}, L^2(B))}=\infty,\\
            \lim_{\epsilon\rightarrow 0}\Vert \mathcal{V_{\sigma}}g_{z, \tau, \epsilon}\Vert_{H^{1/2}_{\sigma}(\mathbb{R}, H^{1/2}(\partial D))}=\infty\quad &\text{or}\quad \lim_{\epsilon\rightarrow 0}\Vert \overline{\mathcal{V}}g_{z, \tau, \epsilon}\Vert_{H^{1/2}_{\sigma}(\mathbb{R}, H^{1/2}(\partial D))}=\infty.
        \end{align*}
\end{itemize}
\end{corollary}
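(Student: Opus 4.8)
The plan is to reproduce the proof of Theorem~\ref{thm21} essentially verbatim, the only change being that the modified imaginary near-field operator $\mathcal{I}=-\mathcal{A}\mathcal{V_{\sigma}}+\overline{\mathcal{A}}\,\overline{\mathcal{V}}$ is replaced by $\mathcal{N}$, which carries the single factorization $\mathcal{N}=-\mathcal{A}\mathcal{V_{\sigma}}$ (cf.\ the proof of Theorem~\ref{thm18}). Because $\mathcal{N}$ has only one factor, the step in Theorem~\ref{thm21}(b) that uses the ``physical versus nonphysical radiating wave'' dichotomy to peel off the $\overline{\mathcal{A}}\,\overline{\mathcal{V}}$ contribution is no longer needed, so the argument is in fact a mild simplification of the one for $\mathcal{I}$.

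For part (a), let $z\in D$. Then $\phi_{z,\tau}$ is a radiating time-domain point source whose singularity lies inside $D$, hence $\phi_{z,\tau}|_{\partial D}(t,\cdot)\in H^{1/2}_{\sigma}(\mathbb{R},H^{1/2}(\partial D))$ and $\mathcal{A}(\phi_{z,\tau}|_{\partial D})=\phi_{z,\tau}|_{B}$ (the time-domain form of the forward implication in Lemma~\ref{thm10}). As in Theorem~\ref{thm21}(a), I would set $\tilde\phi:=(0,-\phi_{z,\tau}|_{\partial D})$ and, using the dense range of the modified product volume operator $\mathbb{V}=(\overline{\mathcal{V}},\mathcal{V_{\sigma}})$ from Theorem~\ref{thm20}, choose $g_{z,\tau,\epsilon}\in W^{3/2}_{\sigma}(\mathbb{R},L^2(B))$ with $\|\mathbb{V}g_{z,\tau,\epsilon}-\tilde\phi\|$ small. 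This forces simultaneously $\overline{\mathcal{V}}g_{z,\tau,\epsilon}\to0$ and $\mathcal{V_{\sigma}}g_{z,\tau,\epsilon}\to-\phi_{z,\tau}|_{\partial D}$ in $H^{1/2}_{\sigma}(\mathbb{R},H^{1/2}(\partial D))$, which yields the two boundedness (``moreover'') statements, while boundedness of $\mathcal{A}$ (Theorem~\ref{thm18}) gives $\|\mathcal{N}g_{z,\tau,\epsilon}-\phi_{z,\tau}\|_{H^0_{\sigma}(\mathbb{R},L^2(B))}=\|\mathcal{A}(\mathcal{V_{\sigma}}g_{z,\tau,\epsilon}+\phi_{z,\tau}|_{\partial D})\|\to0$.

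For part (b), let $z\notin D$ and let $g_{z,\tau,\epsilon}\in W^{3/2}_{\sigma}(\mathbb{R},L^2(B))$ satisfy \eqref{eq:N1}. I would argue by contradiction: if $\|\mathcal{V_{\sigma}}g_{z,\tau,\epsilon_n}\|$ stayed bounded along some sequence $\epsilon_n\to0$, then, passing to a subsequence, $\mathcal{V_{\sigma}}g_{z,\tau,\epsilon_n}$ converges weakly to some $f\in H^{1/2}_{\sigma}(\mathbb{R},H^{1/2}(\partial D))$; weak continuity of the bounded operator $\mathcal{A}$ together with \eqref{eq:N1} forces $-\mathcal{A}f=\phi_{z,\tau}$ in $H^0_{\sigma}(\mathbb{R},L^2(B))$, and taking the Laplace transform yields $-A\hat f(s)=e^{-\sigma\tau}\hat\zeta(s)\hat\Phi_s(\cdot;z)$ in $B$ for almost every $s=k+i\sigma$. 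Since $\zeta\not\equiv0$, the factor $e^{-\sigma\tau}\hat\zeta(s)$ is nonzero for a.e.\ such $s$, so $\hat\Phi_s|_B(\cdot;z)\in\mathrm{range}(A)$ with $z\notin D$, contradicting Lemma~\ref{thm10}. Hence $\|\mathcal{V_{\sigma}}g_{z,\tau,\epsilon}\|\to\infty$ (which in particular gives the alternative in the statement), and boundedness of $\mathcal{V_{\sigma}}$ (Theorem~\ref{thm18}) then forces $\|g_{z,\tau,\epsilon}\|_{W^{3/2}_{\sigma}(\mathbb{R},L^2(B))}\ge\|\mathcal{V_{\sigma}}g_{z,\tau,\epsilon}\|/\|\mathcal{V_{\sigma}}\|\to\infty$.

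No genuinely new difficulty arises relative to Theorem~\ref{thm21}; the one point to watch is that the scalar $e^{-\sigma\tau}\hat\zeta(s)$ multiplying $\hat\Phi_s(\cdot;z)$ must not vanish identically in $s$ (otherwise the contradiction with Lemma~\ref{thm10} would be empty), which follows from the analyticity of $\hat\zeta$ on the line $\mathrm{Im}\,s=\sigma$ and $\zeta\not\equiv0$. All remaining steps are direct transcriptions of the corresponding parts of Theorems~\ref{thm20}, \ref{thm18}, \ref{thm21} and Lemma~\ref{thm10}.
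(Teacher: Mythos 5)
Your proposal is correct and follows exactly the route the paper intends: the paper gives no separate proof of Corollary~\ref{thm24}, merely remarking that Theorem~\ref{thm21} "also holds" with $\mathcal{I}$ replaced by $\mathcal{N}$, and your write-up is precisely that argument, with the correct observation that the single factorization $\mathcal{N}=-\mathcal{A}\mathcal{V_{\sigma}}$ removes the need for the physical/nonphysical radiation dichotomy in part (b). Your added remark that $\hat\zeta(s)\neq 0$ for a.e.\ $s$ on the line $\mathrm{Im}\,s=\sigma$ (so that the contradiction with Lemma~\ref{thm10} is nonvacuous) is a point the paper glosses over even in Theorem~\ref{thm21}, and is a welcome clarification.
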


\section{Numerical experiments}\label{sec:6}
\begin{figure}[htbp]
   \centering     
   \subfigure[]{
   \includegraphics[width=0.4\linewidth]{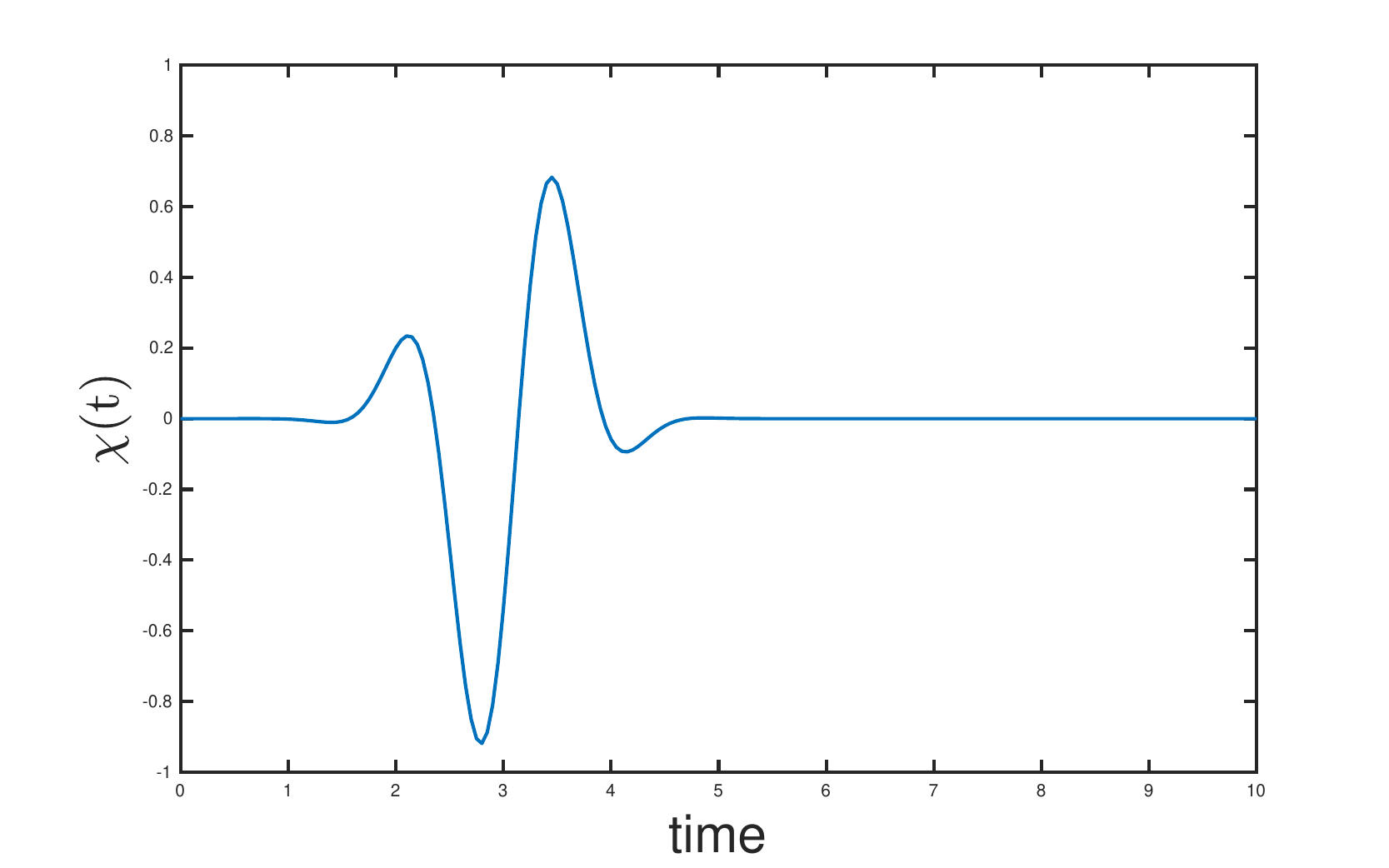}}
   \subfigure[]{
   \includegraphics[width=0.4\linewidth]{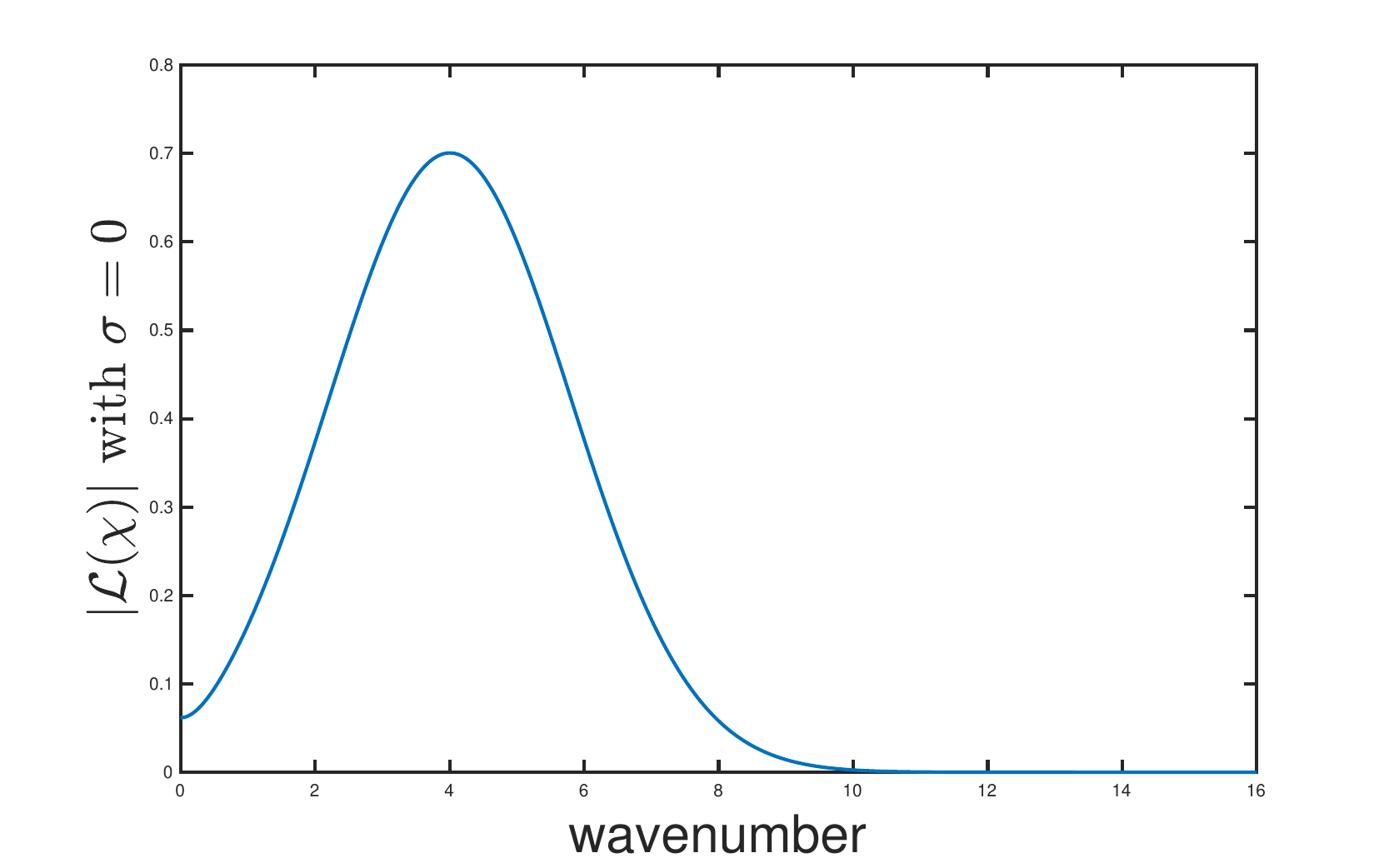}}
   \caption{The pulse $\chi$ (a) and $|\mathcal{L}\chi|$ with $\sigma=0$ (b).}
   \label{Figure 2}
\end{figure}
\subsection{Manipulation of time domain measurements for random sources}\label{sec:2}
\numberwithin{equation}{section}
\setcounter{equation}{0}
To image with random sources, the Helmholtz-Kirchhoff identity and cross-correlation play important roles in the frequency domain, e.g., \cite[Theorem 2.2]{GarnierPapanicolaou2016}. Similarly in the Laplace domain,
for two fixed points $p, q\in \mathbb{R}^3, p\neq q$ and $s=k+i\sigma$ with $0<\sigma\ll1$, we have that
\be\label{hkforphi}
\hat\Phi_s(p;q)-\overline{\hat\Phi_s(p;q)}
 \approx i(s+\overline{s})\int_{\partial B_R}\hat\Phi_s({p;z}) \overline{\hat\Phi_s({q;z})}ds(z),
\en
where $B_R$ is a large ball centered at origin,  and $\partial B_R$ encloses $p$ and $q$ and is far from them; moreover, let $\hat u_*({s,p;q}) = \hat u^{\rm scat}_*({s,p;q}) + {\hat\Phi_s(p;q)}$ where $\hat u^{\rm scat}_*({s,p;q})$ is the unique physical  solution to \eqref{helmeq2} with boundary data $-{\hat\Phi_s(p;q)}$, for $p, q\in \mathbb{R}^3$, $p\neq q$ and $s=k+i\sigma$ with $0<\sigma\ll1$, it holds that
\begin{equation}\label{hkforu}%
\hat u_*({s,p; q})-\overline{\hat{u}_*({s,p; q})} \approx i(s+\overline{s})\int_{\partial B_R}\hat{u}_*({s,p; z})\,\overline{\hat{u}_*({s,q; z})}ds(z).
\end{equation}
 The above two equations and their proofs are in similar spirit to \cite{Garnier2023,GarnierPapanicolaou2016} and we refer to \cite{Garnier2023,GarnierPapanicolaou2016} for more details. A brief discussion is also included in the Appendix.
From \eqref{hkforphi} and \eqref{hkforu}, we get the following relationship for the scattered fields 
\begin{equation}\label{eq:1}
\begin{aligned}
&~~~\hat{u}^{\rm scat}_*({s,p; q})-\overline{\hat{u}^{\rm scat}_*({s,p; q})} \\
&\approx   i(s+\overline{s})\int_{\partial B_R}{\hat{u}_*({s,p; z})}\,\overline{\hat{u}_*({s,q; z})}ds(z) -\left(\hat\Phi_s(p;q)-\ov{\hat\Phi_s(p;q)}\right).
\end{aligned}
\end{equation}
Finally, we use \eqref{eq:1} and the inverse Laplace transform to give the time domain Helmholtz-Kirchhoff identity for the scattered field. 

Before carrying out the identity, we first introduce some notations that will be used later on. 
Given a function $g(t)$, define $\Breve{g}(t)$ by
\be\label{breve:g}
\Breve{g}(t):= g(-t)e^{2\sigma t}\quad\text{with}\quad\sigma>0.
\en
Denote by $g_f$ the convolution in time of $g$ and $f$, that is,
\ben
g_f(t):= \left[g(\cdot)*f(\cdot)\right](t).
\enn
Denote by $\tilde g$ the convolution in time of $g$ and $\Breve{g}$, 
\be\label{tilde:g}
\tilde{g}(t):=\left[g(\cdot)*\Breve{g}(\cdot)\right](t).
\en
It is seen that the Laplace transform of $\Breve{g}$, $g_f$ and   $\tilde g$ with respect to $s=k+i\sigma$ are $\ov{\hat g(s)}$, $\hat g(s)\hat{f}(s)$ and $\hat g(s)\ov{\hat{g}(s)}$, respectively. With the above notation, in the following we denote by $u^{\rm scat}_{f}= u^{\rm scat}_**f$,  $u_{f}= u_**f$,  $\Breve{u}(t)= u_*(-t)e^{2\sigma t}$, and $\Breve{u}^{\rm scat}(t)= u^{\rm scat}_*(-t)e^{2\sigma t}$.

    Let $\sigma>0$ and $\chi:\R\rightarrow\R$ be a square integrable function  such that $\hat{\chi}(s)$ is compactly supported in $\left\{s=k+i\sigma \ |\ k \in [k_1, k_2]\right\}$, multiply $\hat{\chi}(s)\overline{\hat{\chi}(s)}$ on both sides of \eqref{eq:1} and take the inverse Laplace transform, we have that
\begin{equation}\label{eq9}
\begin{aligned}
&\mathcal{L}^{-1}\left[\left(\hat{u}^{\rm scat}_*({s,p;q})-\overline{\hat{u}^{\rm scat}_*({s,p;q})}\right)\cdot\hat{\chi}(s)\overline{\hat{\chi}(s)}\right](t)\\
    =&\left[\left({u^{\rm scat}_*}({\cdot,p;q})-{\Breve{u}^{\rm scat}_*} (\cdot,p;q)\right)*\tilde\chi(\cdot)\right](t)
    ={u^{\rm scat}_{\tilde\chi}}(t,p;q)-{\Breve{u}^{\rm scat}_{\tilde\chi}}(t,p;q),
\end{aligned}    
\end{equation}
and
\begin{equation}\label{eq10}
\begin{aligned}
&\mathcal{L}^{-1}\left[\left(i(s+\overline{s})\int_{\partial B_R}{\hat u_*({s,p;z})}\,\overline{\hat u_*({s,q;z})}ds(z)\right)\cdot\hat{\chi}(s)\overline{\hat{\chi}(s)}\right](t)\\    
=&\mathcal{L}^{-1}\left[\int_{\partial B_R}\overline{\hat{\chi}(s)\hat u_*({s,q;z})}\cdot(is-\overline{is})\hat{\chi}(s)\hat u_*({s,p;z})ds(z)\right](t)\\
=&2\int_{\partial B_R}\left[\Breve{u}_{\chi}({\cdot,q;z})*\left(
        \sigma u_\chi({\cdot,p;z})-u_{\dot\chi}({\cdot,p;z})\right)\right](t)ds(z),
\end{aligned}    
\end{equation}
together with
\begin{equation}\label{eq11}
\begin{aligned}
&\mathcal{L}^{-1}\left[\left(\hat\Phi_s({p;q})-\ov{\hat\Phi_s({p;q})}\right)\cdot\hat{\chi}(s)\overline{\hat{\chi}(s)}\right](t)\\
=&\left[\left(\Phi({\cdot,p;q})-{\Breve{\Phi}({\cdot,p;q})}\right)*\tilde\chi(\cdot)\right](t)
=\Phi_{\tilde\chi}({t,p;q})-\Breve{\Phi}_{\tilde\chi}({t,p;q}).
\end{aligned}    
\end{equation}
From \eqref{eq9}, \eqref{eq10} and \eqref{eq11},
    formally it follows that
    \be\label{eq:2}
    &~~~~~~~~&u^{\rm scat}_{\tilde\chi}({t,p;q}) - \Breve{u}^{\rm scat}_{\tilde\chi}({t,p;q})\\
\nonumber        &\approx &2\int_{\partial B_R}
\left[{\Breve{u}_{\chi}({\cdot,q;z})}*\left(
        \sigma {u_\chi({\cdot,p;z})}-{u_{\dot\chi}({\cdot,p;z})}\right)\right](t)ds(z)
        -\Phi_{\tilde\chi}({t,p;q})+\Breve{\Phi}_{\tilde\chi}({t,p;q}),
    \en
    where $\dot\chi$ is the derivative of $\chi$ with respect to the time variable $t$. 
    
{ The time domain Helmholtz-Kirchhoff identity \eqref{eq:2}   gives a connection between the time domain total fields due to random sources and the subtraction of two  scattered fields. The left hand side of \eqref{eq:2} are the scattered fields measured at $p$ that corresponding to the point source located at $q$, while the right hand side of \eqref{eq:2} are the total fields measured at $p$ and $q$ that corresponding to the point sources located at  $z\in\partial B_R$. For passive imaging, one can only collect the total fields in $B$ due to unknown point sources located randomly on a surface $\partial B_R$ and use such information to approximate the scattered field corresponding to known incident locations with the help of \eqref{eq:2}. These scattered wave fields are used to construct $\mathcal{I}$ in \eqref{operator I} for the time domain linear sampling method  in Theorem \ref{thm21}.  }

\subsection{The numerical inversion scheme}
{{\color{lxl}Although the theoretical analysis requires working with the Laplace parameter $\sigma$ to be positive and sufficiently small.
We  follow the ideas in \cite{ChenHaddar2010,GuoMonkColton2015,HaddarLiu2020} and set $\sigma=0$ in our numerical experiments for simplicity. In this case, the Helmholtz-Kirchhoff identity \eqref{eq:2} in the time
domain is still valid, this is mainly because of the corresponding results in the frequency domain, see references \cite{Garnier2023,Garnier2024}. Then} the right hand side of \eqref{eq:2}, denoted by $c(t,p;q)$, is 
\ben
c(t,p;q)=-2\int_{\partial B_R}
\left[{{u}_{\chi}({-\cdot,q;z})}*{u_{\dot\chi}({\cdot,p;z})}\right](t)ds(z)-\Phi_{\tilde\chi}({t,p;q})+\left[\Phi(-\cdot,p;q)*{\color{lxl}\tilde\chi}(\cdot)\right](t),
\enn
since $\Breve{g}(t)=g(-t)$ in \eqref{breve:g} when $\sigma=0$. We can rewrite $c(t,p;q)$ as
\be\label{ctpq}
c(t,p;q)
&=&-2\int_{\partial B_R}\left[\int_{-\infty}^{+\infty}{u}_{\chi}({\tau,q;z})u_\chi(t+\tau,p;z)d\tau\right]_t^{\prime} ds(z)\\
\nonumber&&-{\color{lxl}\Phi_{\tilde\chi}({t,p;q})+\Phi_{\tilde\chi}(-t,p;q)}.
\en
The equality \eqref{ctpq} allows to construct an operator $\mathcal{C}$  which  can be seen as an approximation to $\mathcal{I}$ by
\be\label{operatorC}
\left[\mathcal{C}g\right](t,p)=\int_B\left[c(\cdot,p;q)*g(\cdot,q)\right](t)ds(q),\quad (t,p)\in\R\times B.
\en

The inverse acoustic scattering problem with the time domain linear sampling method (TDLSM) consists of two steps. First, we use the passive total field data \eqref{data time domain total} to get $c(t,p;q)$, as well as operator $\mathcal{C}$ (direct problem). Second, we solve the linear system $\mathcal{C}g_{z}=\phi_{z}$ and compute $1/\|g_z\|$ for each sampling point $z$ (inverse problem).
For the direct scattering problem, we use the open-source MATLAB toolbox  deltaBEM, e.g., \cite{Dominguez2014}, to get the corresponding scattered field and total field data. For a signal $\chi$ that supported in {$[0,T_0]$}, we use it to generate the {incident wave $u^{\rm inc}$ defined in \eqref{ui}}. In the passive imaging, we consider $L$ incident point sources $z_l$ located randomly on  $\partial B_R$ where
\be\label{source}
z_l=Re^{i\theta_l},\quad \theta_l=\frac{2\pi}{L}(l-1+\beta_l),\quad 1\leq l\leq L,
\en
where $\beta_l$ is drawn from the uniform distribution on $[0, \beta]$. Here $\beta$ can be seen as a {\it random level}. The corresponding total field data is recorded during {\color{lxl}$[0,T]$} and written as
\be\label{data:1}
\left\{u_{{\chi}}({\color{lxl}n\Delta t},x_j;z_l)~:~0\leq n\leq {2N}, 1\leq j\leq J, 1\leq l\leq L\right\}
\en
and
\be\label{data:2}
\left\{u_{{\chi}}({\color{lxl}n\Delta t},y_m;z_l)~:~0\leq n\leq {2N}, 1\leq m\leq M, 1\leq l\leq L\right\}
\en
with {\color{lxl}$\Delta t=T/2N$} for $2N+1$ time steps, $J$ measured locations $x_j$ and $M$ measured locations $y_m$. For $n'=-N,...,0,...,N$, we define
\ben
\varphi(2n'\Delta t,x_j,y_m;z_l)
:=\sum_{n=n_1}^{n_2}u_\chi({\color{lxl}n\Delta t},y_m;z_l)u_\chi\left({\color{lxl}(2n'+n)\Delta t},x_j;z_l\right)\Delta t
\enn
with $n_1=\max\{0,-2n'\}$ and $n_2=\min\{2N,2(N-n')\}$ for $2N+1$ time steps. Using finite difference to obtain the derivative with respect to $t$ in \eqref{ctpq}, discrete form of $c(t,p;q)$ can be represented as
\begin{align*}
    &c(2n'\Delta t, x_j; y_m)=-\frac{\pi R}{L}\sum_{l=1}^L\left\{\varphi\left(2(n'+1)\Delta t,x_j,y_m;z_l\right)-\varphi\left(2(n'-1)\Delta t,x_j,y_m;z_l\right)\right\}\\
    &-{\color{lxl}
    \Phi_{\tilde\chi}(2n'\Delta t,x_j;y_m)+\Phi_{\tilde\chi}(-2n'\Delta t,x_j;y_m)},\quad n'=-(N-1),...,0,...,(N-1).
\end{align*}
We use $\varphi\left(-2(N+1)\Delta t,x_j,y_m;z_l\right)=0$ and  $\varphi\left(2(N+1)\Delta t,x_j,y_m;z_l\right)=0$ to get $c(\cdot, x_j; y_m)$ for $2N+1$ time steps in the numerical implementation. {\color{lxl}Thus the discrete version $C$ of operator $\mathcal{C}$ in \eqref{operatorC} can be represented as
\be
\label{operatorCC}\left[Cg\right]\left(2k\Delta t,x_j\right)
:=2\sum_{m=1}^{M}\sum_{h=k-N}^{k+N} c\left(2(k-h)\Delta t,x_j;y_m\right)g(2h\Delta t,y_m)\Delta t\Delta y 
\en
for $k=-N,...,0,...,N$.

We emphasize that operator $C$ is built by using the {\it passive data}, which is the total field \eqref{data:1} and \eqref{data:2} corresponding to unknown sources $z_l$. To show the effectiveness of our linear sampling method using operator $C$, we would like to compare its reconstructed results with the linear sampling method with {\it active data}, which is the scattered field corresponding to known sources. Consider the pulse $\tilde{\chi}(t)$ in \eqref{tilde:g}, which can be represented as $\tilde\chi(t)=\left[\chi(\cdot)*\chi(-\cdot)\right](t)$ while $\sigma=0$ and supported in $[-T_0,T_0]$, the corresponding  scattered field
\be\label{data:3}
\left\{u^{\rm scat}_{\tilde{\chi}}({\color{lxl}2{n''}\Delta t},x_j;y_m)~:~{\color{lxl}-N\leq{n''}\leq N}, 1\leq j\leq J, 1\leq m\leq M\right\}
\en
is recorded for $2N+1$ time steps during $[-T,T]$, $M$ incident point sources $y_m$, and $J$ measured locations $x_j$. Here $y_m$ and $x_j$ coincide with the ones in \eqref{data:1} and \eqref{data:2}. 
 Thus the discrete version $I$ of the modified imaginary near-field operator  $\mathcal{I}$ in \eqref{operator I} can be represented as
\be\label{operatorII}
\left[Ig\right]\left(2k\Delta t,x_j\right)
:=&&2\sum_{m=1}^{M}\sum_{h=k-N}^{k+N}
\left[u^{\rm scat}_{\tilde{\chi}}(2(k-h)\Delta t,x_j;y_m)\right.\\
\nonumber
&&\left.-u^{\rm scat}_{\tilde{\chi}}(2(h-k)\Delta t,x_j;y_m)\right]g(2h\Delta t,y_m)\Delta t\Delta y
\en
for $k=-N,...,0,...,N$. The discrete version $N$ of the standard near-field operator $\mathcal{N}$ can be represented as 
\be\label{operatorNN}
\left[Ng\right]\left(2k\Delta t,x_j\right)
:=2\sum_{m=1}^{M}\sum_{h=k-N}^{k+N} 
u^{\rm scat}_{\tilde{\chi}}\left(2(k-h)\Delta t,x_j;y_m\right)g(2h\Delta t,y_m)\Delta t\Delta y
\en
for $k=-N,...,0,...N$. The test function we used is $\phi_{\tilde{\chi},z,\tau}\in\R^{(2N+1)\times M}$, which is the discrete form of the test function defined in \eqref{test function} with {$\zeta=\tilde{\chi}$}. That is, for each $k=-N,...,0,...N$,
\ben
\phi_{\tilde{\chi},z,\tau}(2k\Delta t,x_j):=
\frac{\tilde{\chi}\left(2k\Delta t-\tau-|x_j-z|\right)}{4\pi|x_j-z|}.
\enn

For the inverse scattering problem, we solve $Ag_z=\phi_{\tilde{\chi},z,\tau}$ with $Ag_z:=Cg,Ig,Ng$ in \eqref{operatorCC}, \eqref{operatorII} and \eqref{operatorNN}, respectively. The numerical inversion is done along the following scheme.
\begin{itemize}
    \item[(a)]
    Compute $P$ singular values $(\sigma_p)_{p=1,...,P}$ of $A$ with the largest magnitude and the corresponding singular value decomposition is $A=USV^*$. The value of $P$ is determined by the ratio $|\sigma_P/\sigma_1|\geq 0.005$. We denote $U_{P}$, $S_{P}$ and $V_{P}$ the corresponding matrices of $U$, $S$ and $V$ after truncated with the ratio.
    \item[(b)]
    Choose an appropriated shift in time $\tau$ and evaluate the truncated solutions $g_{z,P}$ to $Ag_z=\phi_{\tilde{\chi},z,\tau}$. The explicit formula corresponding to the right hand side $\phi_{\tilde{\chi},z,\tau}$ is
    \ben
    g_{z,P}= V_{P}S^{-1}_{P}U_{P}^*\phi_{\tilde{\chi},z,\tau}.
    \enn
    \item[(c)]
    For each sampling point $z$, we consider the indicator function:
    \ben
    I_A(z):=\frac{1}{\|g_{z,P}\|_F},
    \enn
    Here $\|\cdot\|_F$ represents the Frobenius norm.
    \item[(d)]
    Plot the values of $I_A(z)$ in the sampling region.
\end{itemize}
}%

\subsection{Numerical examples}

{\color{lxl}Our theory is developed in three dimensions, but we shall restrict ourselves to carrying out numerical experiments in a two-dimensional setting. The reason to restrict theoretical work to 3D is that the fundamental solution to the wave equation in the time domain is much more complicated in 2D than in 3D and we want to avoid further technical details. We expect that all the theoretical results are valid in 2D since both the time domain Helmholtz-Kirchhoff identity and analysis for the operators can be similarly done in 2D dealing with  its fundamental solution. See \cite{CakoniColton2014} for a detailed analysis in 2D in the frequency domain.} 

In our numerical experiments, we use the pulse $\chi(t)=\mathrm{sin}(4t)e^{-1.6(t-3)^2}$, which corresponds to a signal at {\color{lxl}a central frequency $4$ (i.e. a central wavelength roughly equals to $1.6$)} and with bandwidth equal to $8$. The function $\chi$ and its Fourier transform   are depicted in Figure \ref{Figure 2}. 

{\color{lxl}In the passive imaging, we consider $L=80$ random sources $z_l$ located on a circle $R=20$ centered at $(0,0)$ with the random level $\beta=0.1$ in \eqref{source}. The corresponding total field $u_\chi$ is computed at $30$ equiangular points $a_i$ on a circle with a radius of $2.5$ and centered at $(1,1)$ from $[-\pi,\pi)$.  The total fields $u_{{\chi}}(\cdot,x_j;z_l)$ in \eqref{data:1} and $u_{{\chi}}(\cdot,y_m;z_l)$ in \eqref{data:2} are obtained by set $x_j=a_{2j}$ and $y_m=a_{2m-1}$ for $J=M=15$. The recording time of the total field $u_{\chi}$ is {$[0, 40]$} with time step $\Delta t = 0.1$. In the sub-figures for $I_C$, we use crosses ''$\times$'' to represent the location these measurement points $\{a_i\}_{i=1,...,30}$.

To compare the reconstructions with active imaging, we use the same locations of $x_j$ and $y_m$ described above and compute the active data in \eqref{data:3}. In the sub-figures for $I_I$ and $I_N$, we use crosses "$\times$" and dots "$\cdot$" to represent $\{x_j\}_{j=1,...,15}$ and $\{y_m\}_{m=1,...,15}$, respectively.} 

The simulated data are perturbed by adding random noise by $u_{\chi,\delta}=u_\chi+\delta(2\mu-1)|u_\chi|$, where the noise level $\delta=5\%$ and $\mu$ is uniform distribution on $[0,1]$. Similarly, we can get the noisy scattered field data $u^{\rm scat}_{\tilde{\chi},\delta}$ in \eqref{data:3} for $Ig$ and $Ng$.  The sampling region is a circle $R=2.2$ centered at $(1,1)$ and values outside of the circle is set to be zero. In each figure, the black solid line represents the boundary of the obstacle.
 
{\bf Example 1.} {\color{lxl}In the first example, we set domain $B$ to be an annulus with inner radius $2.2$ and outer radius $2.8$ centered at $(1,1)$. We compare the reconstructions under the different choices of measured points inside $B$. In the first row of Figure \ref{Figure 3}, we give the reconstructions with measurements $u_\chi$ and $u^{\rm scat}_{\tilde{\chi}}$ on two concentric circles (radii 2.4 and 2.6) within $B$. In the second row of Figure \ref{Figure 3}, the reconstructions are given with these fields measured on a single circle of radius 2.5 as described before. Here the obstacle is an ellipse that can be  parameterized as
\ben
\partial D_{\text{Ellipse}}(\theta)=\left(1 + 0.25\mathrm{cos}\theta, 1 + 0.5\mathrm{sin}\theta\right),\quad \theta\in [0, 2\pi].
\enn 
It can be seen from Figure \ref{Figure 3} that our sampling method for passive imaging using the total field data corresponding from unknown point sources $I_C$ gives a comparable reconstruction to the results from active imaging data that using  $I_N$ and $I_I$. 
Moreover, different choices of measured points inside domain $B$ give similar reconstructed results. Therefore, in the following examples, we choose the measured points located on a circle (seen as a very thin domain) as described in the beginning of this subsection.
  \begin{figure}[htbp]\label{Figure 3}
   \centering     
   \subfigure[$I_C$]{
   \includegraphics[width=0.3\linewidth, height=3.9cm]{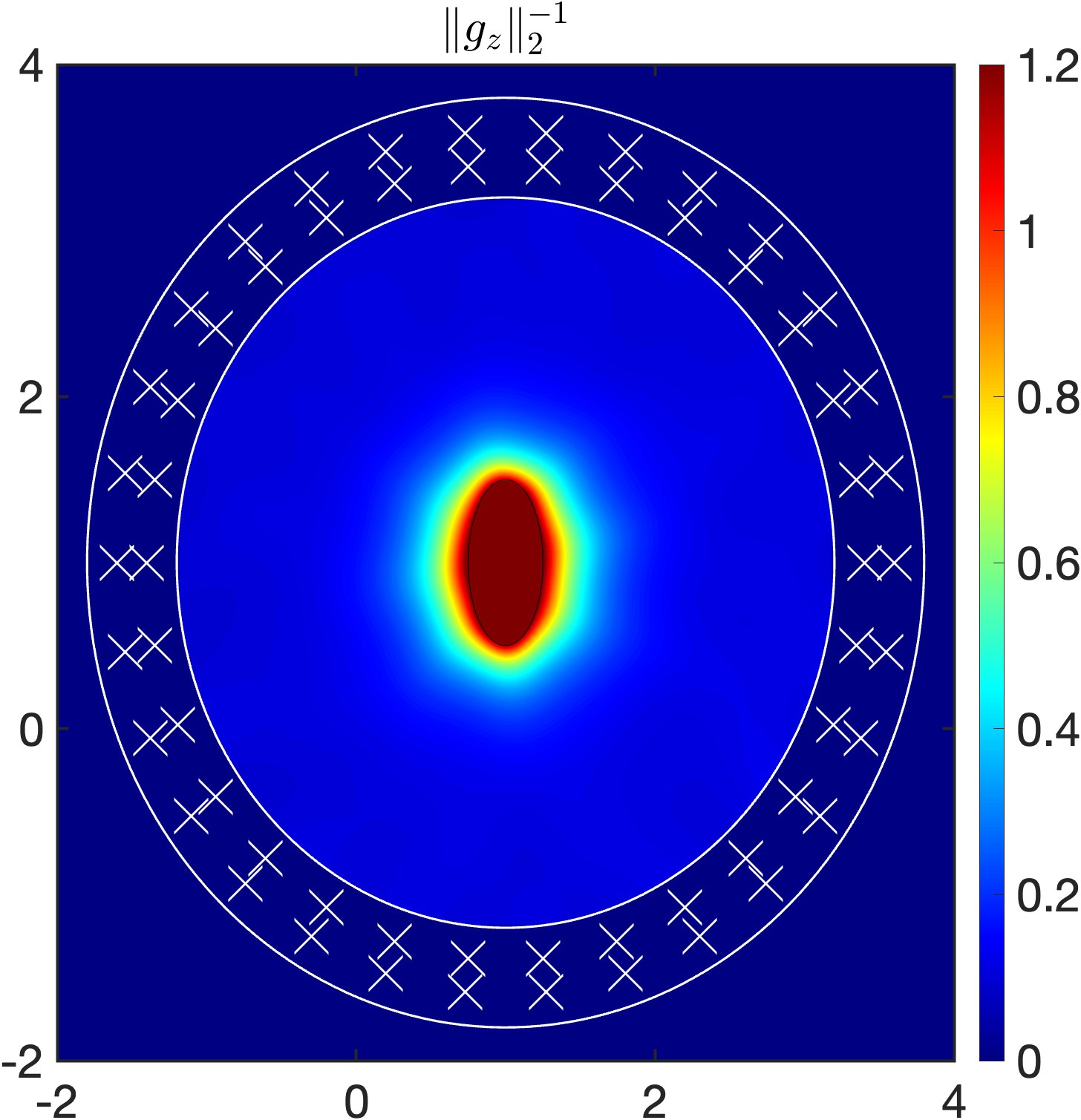}}
   \subfigure[$I_I$]{
   \includegraphics[width=0.3\linewidth, height=3.9cm]{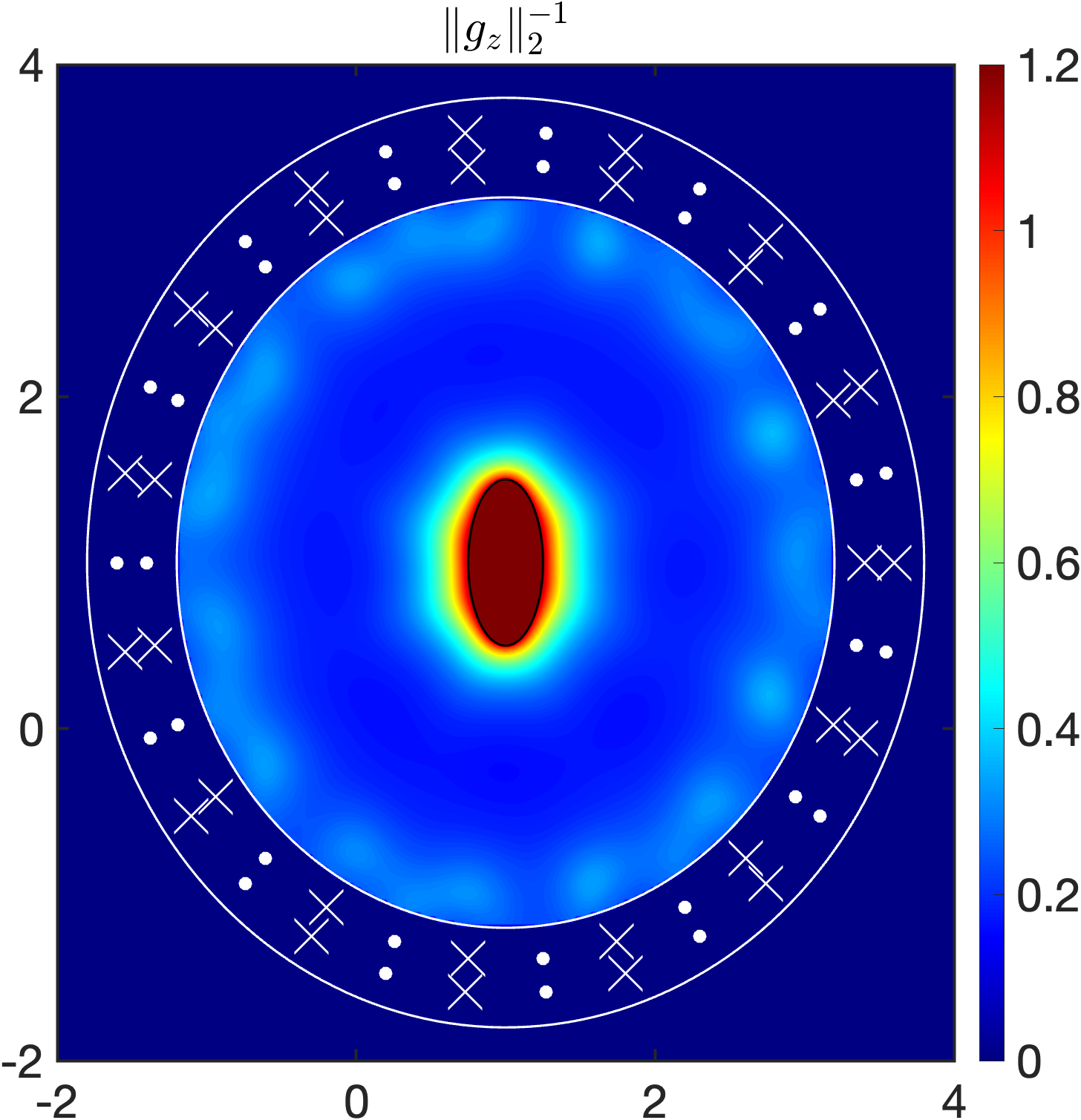}}
   \subfigure[$I_N$]{
   \includegraphics[width=0.3\linewidth, height=3.9cm]{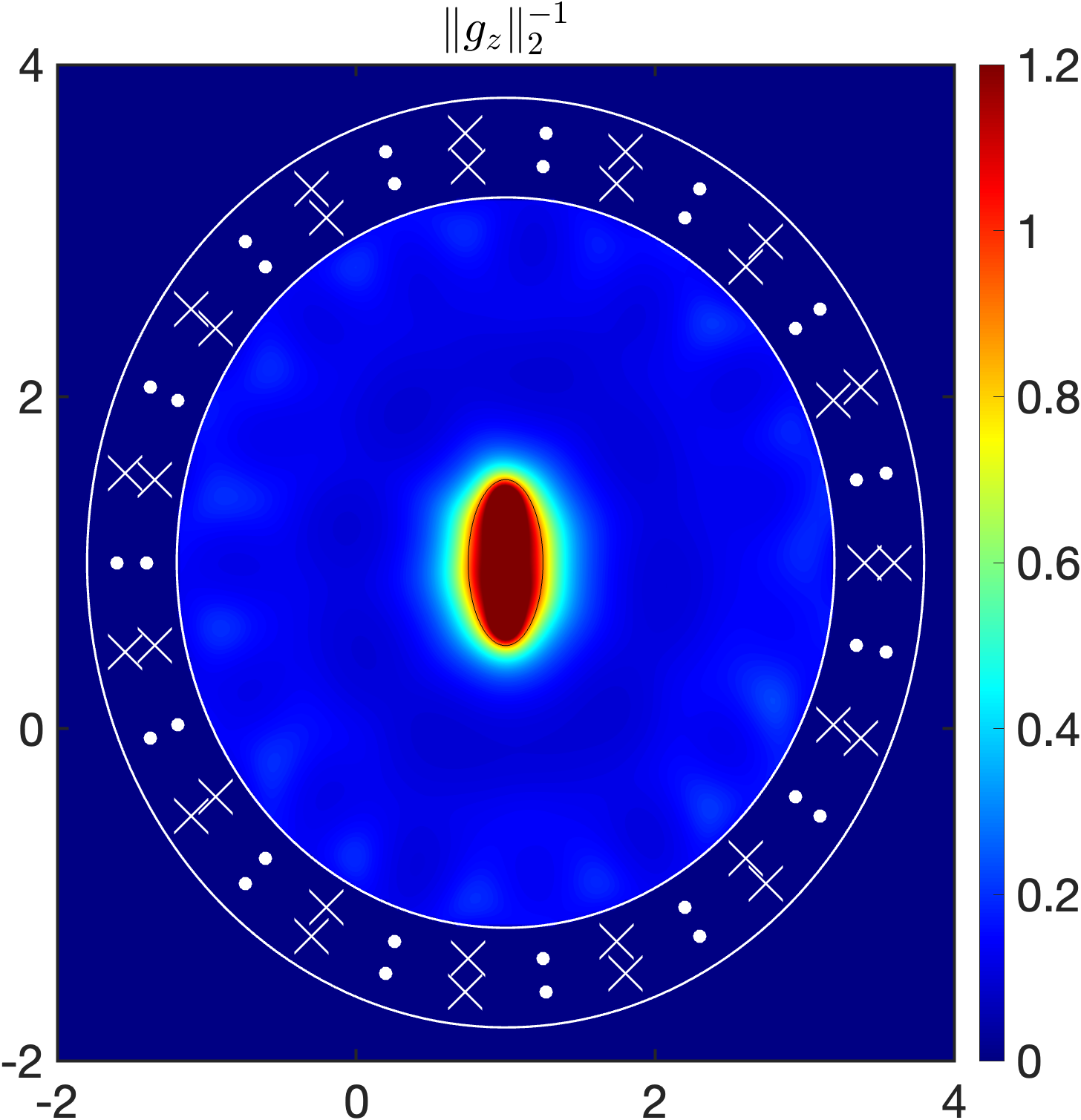}}

   \subfigure[$I_C$]{
   \includegraphics[width=0.3\linewidth, height=3.9cm]{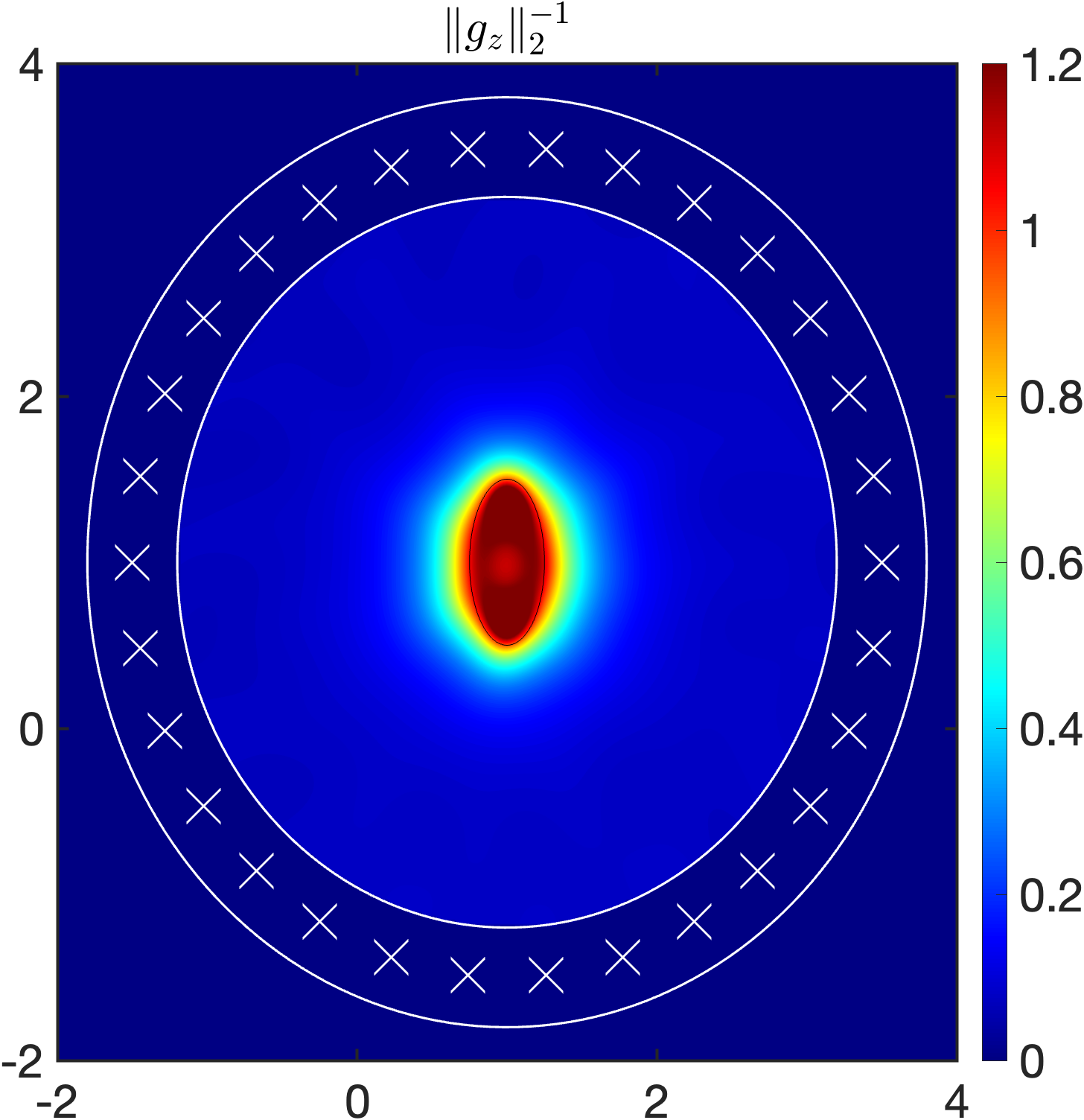}}
   \subfigure[$I_I$]{
   \includegraphics[width=0.3\linewidth, height=3.9cm]{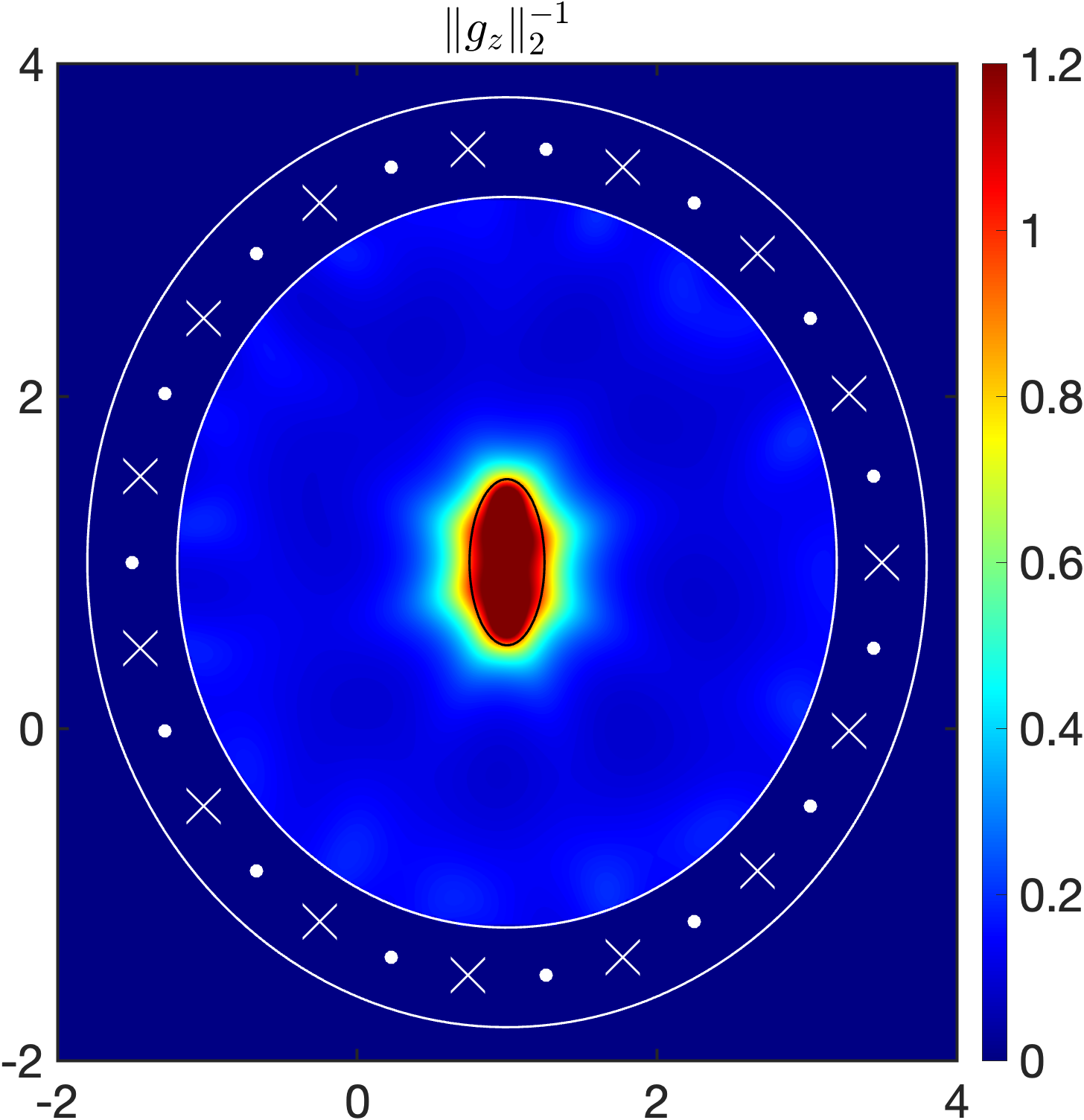}}
   \subfigure[$I_N$]{
   \includegraphics[width=0.3\linewidth, height=3.9cm]{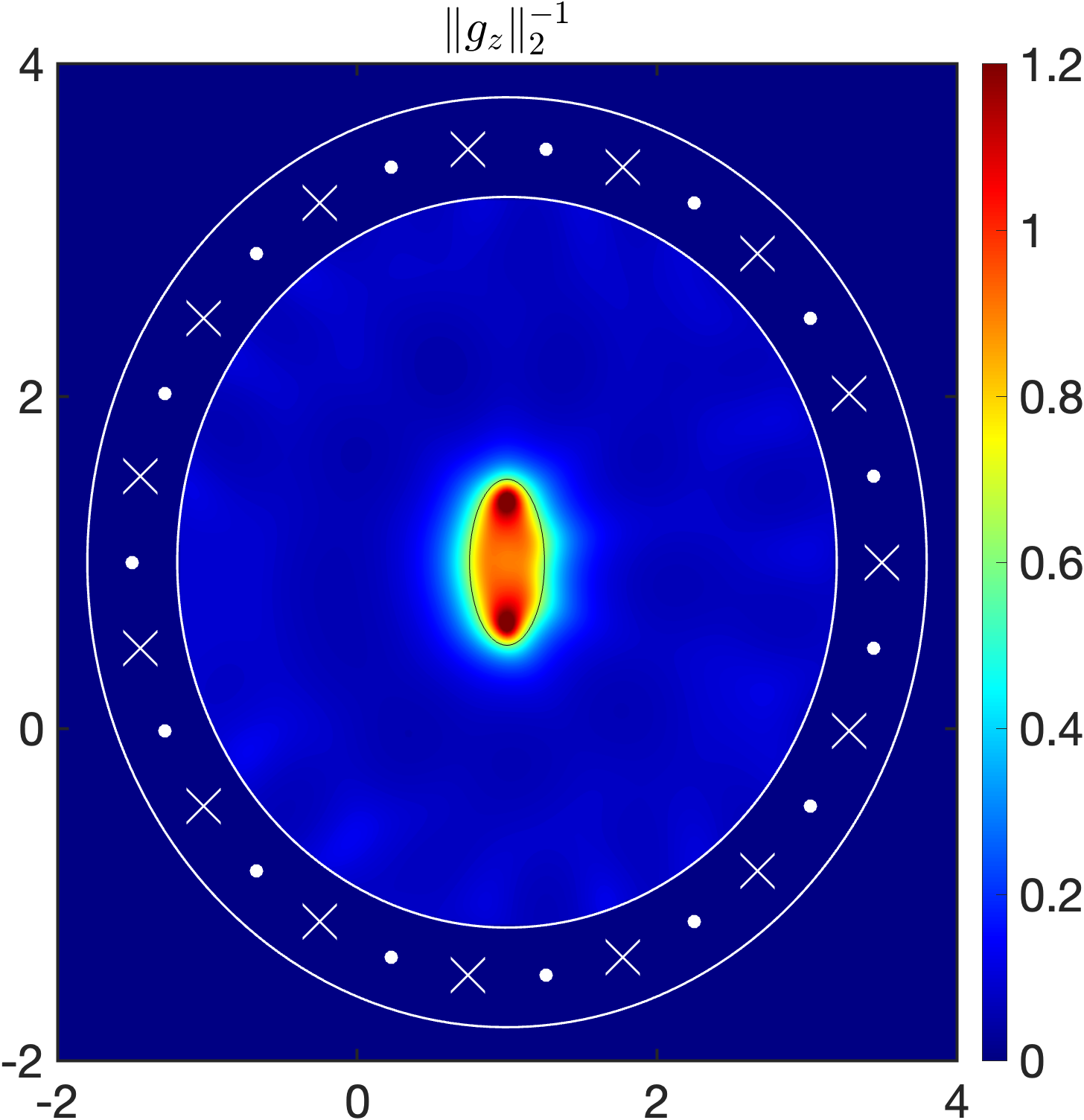}}
   \caption{\color{lxl}Reconstructions for an ellipse with measurements inside an annulus $B$. Figure \ref{Figure 3}(a)-(c) show the results with measurements on two concentric circles (radii $2.4$ and $2.6$); Figure \ref{Figure 3}(d)-(f) show the results with measurements on one circle with radius $2.5$.}
\end{figure}}
{\bf Example 2.} In the second example, we consider the cases of two scatterers using the above three indicators $I_C, I_I$ and $I_N$. These two obstacles are disks centered at $(0.25,1.75)$ and $(1.75,0.25)$ with radius $R=1/3$ and $R=1/5$, respectively. The reconstruction in Figure \ref{Figure 4} shows that our linear sampling method using passive data as well as $I_I$ and $I_N$ with passive data are all well-performed for multiple scatterers.
\begin{figure}[htbp]\label{Figure 4}
   \centering     
   \subfigure[$I_C$]{
   \includegraphics[width=0.3\linewidth, height=3.9cm]{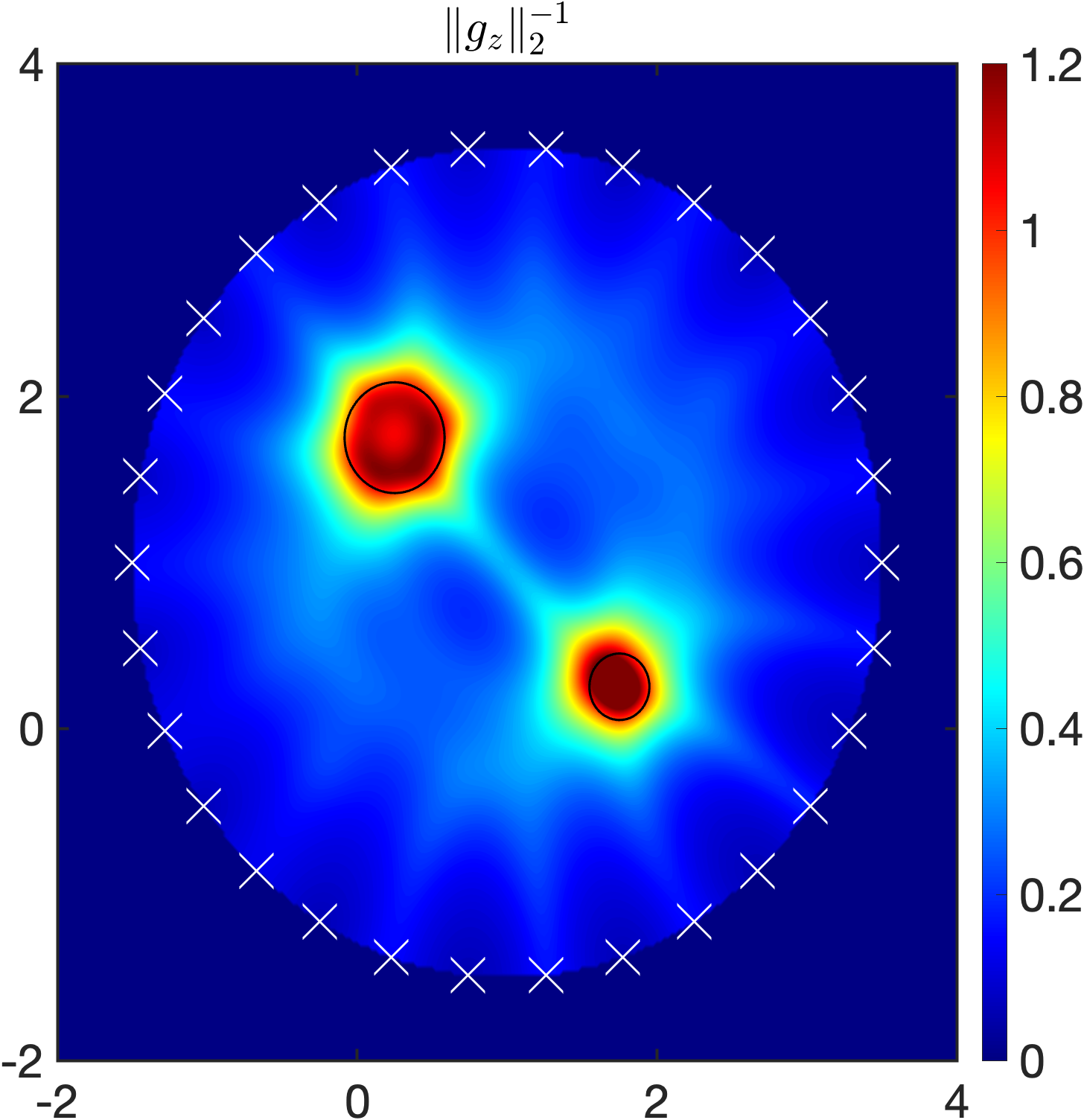}}
   \subfigure[$I_I$]{
   \includegraphics[width=0.3\linewidth, height=3.9cm]{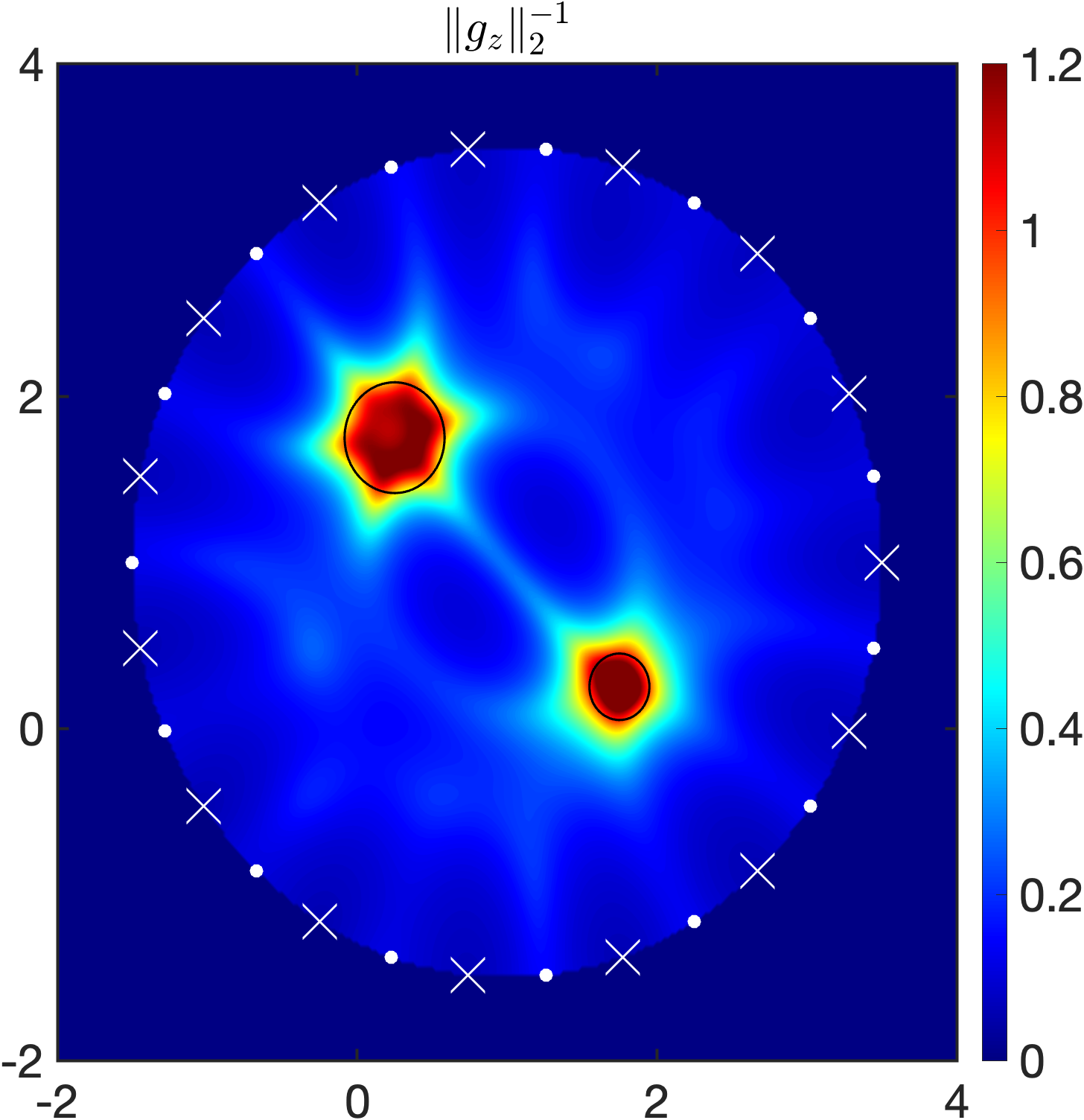}}
   \subfigure[$I_N$]{
   \includegraphics[width=0.3\linewidth, height=3.9cm]{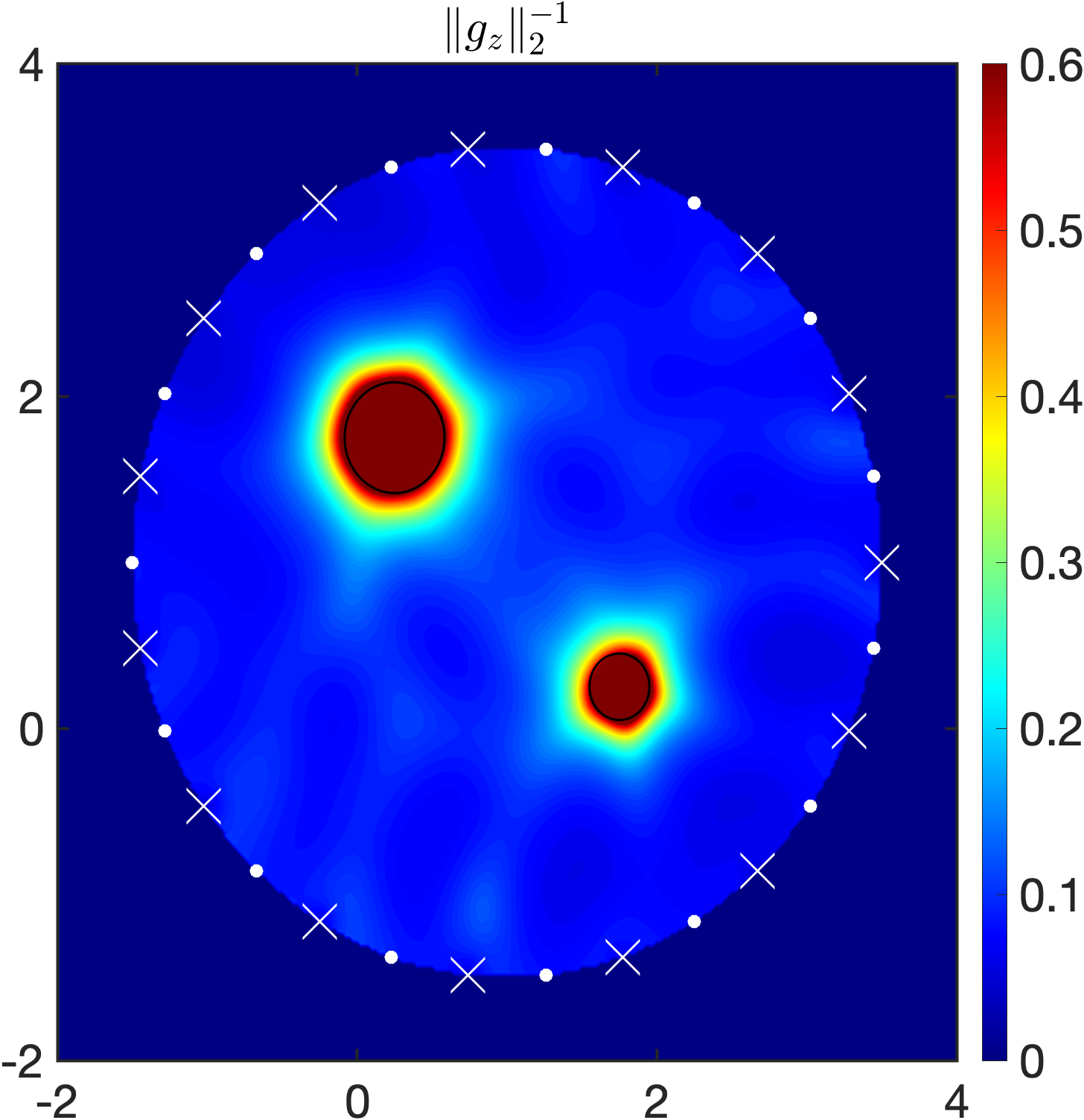}}
   \caption{Reconstruction for two obstacles by using indicators $I_C,I_I,I_N$, respectively.}
\end{figure}

{\bf Example 3.} In the third example, we investigate the influence of the random level $\beta$ in \eqref{source}. We consider a kite-shaped object located at $(1,1)$ and it is described by the parametric representation
\ben
\partial D_{\text{kite}}(\theta)=\left(1+ 0.25\mathrm{cos}\theta +0.25\mathrm{cos}(2\theta), 1+0.5\mathrm{sin}\theta\right),\quad \theta\in [0, 2\pi].
\enn 

{\color{lxl}Figure \ref{Figure 5} shows the result with $\beta=0.1, 0.5, 0.9$, respectively. We observe that when the random level $\beta$ increases (from left to right), the quality of the reconstruction deteriorates. These numerical results align with theoretical expectations. The random level $\beta$ quantifies the degree to which unknown point sources deviate from uniformly distributed locations on $\partial B_R$. In our discretized computation of $\mathcal{C}g$, the integration on $\partial B_R$ was performed under the assumption that point sources are uniformly distributed on $\partial B_R$. As the random level $\beta$ increases (indicating greater deviation from uniform distribution), the integration error grows larger under the original uniform-distribution assumption. This leads to increased computational error in $\mathcal{C}g$ and reduced accuracy. Therefore, the reconstruction performance deteriorates as $\beta$ becomes increases.}
 \begin{figure}[htbp]\label{Figure 5}
    \centering
    \subfigure[$\beta=0.1$]{
    \includegraphics[width=0.3\linewidth, height=3.9cm]{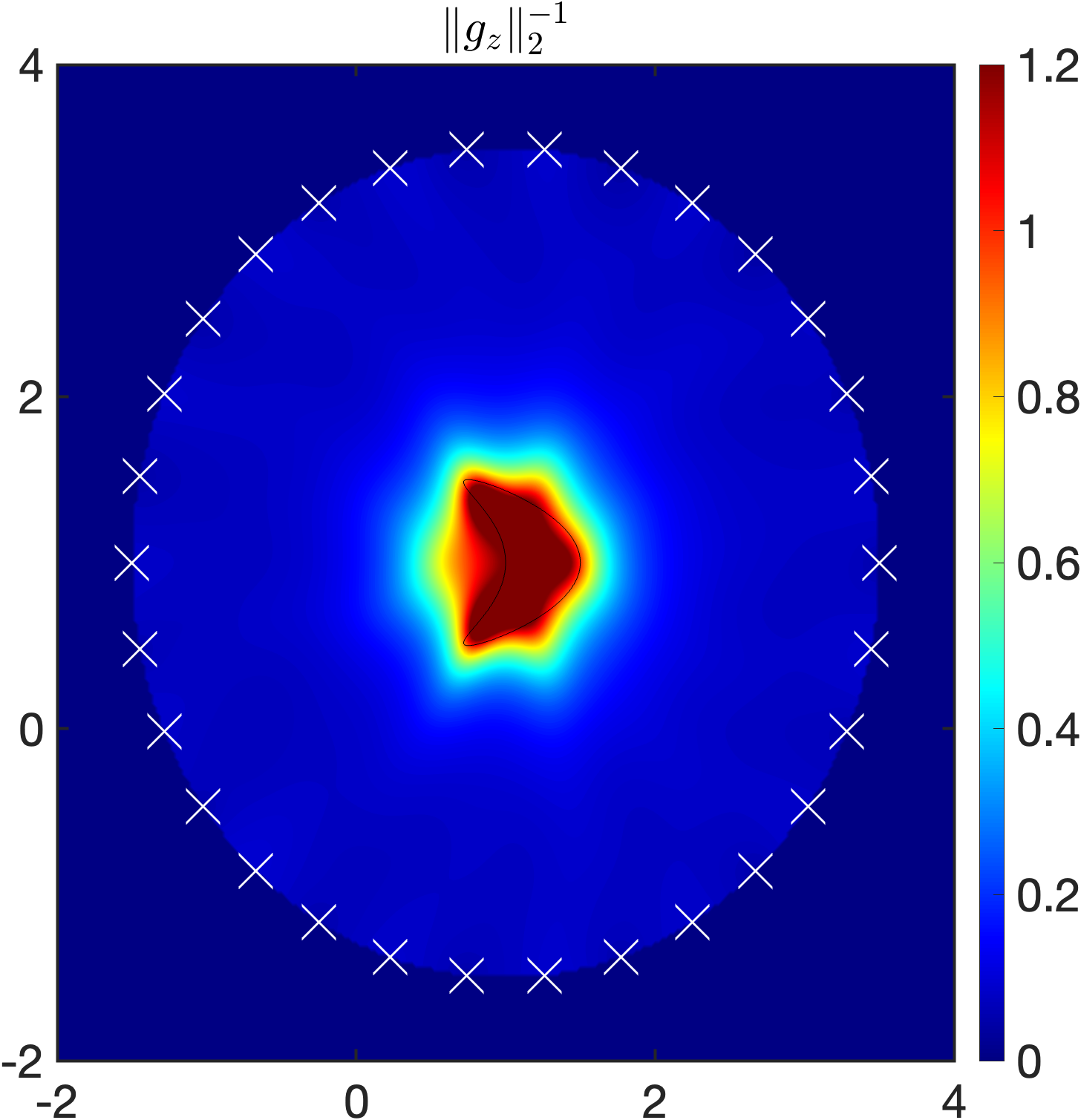}}
    \subfigure[$\beta=0.5$]{
    \includegraphics[width=0.3\linewidth, height=3.9cm]{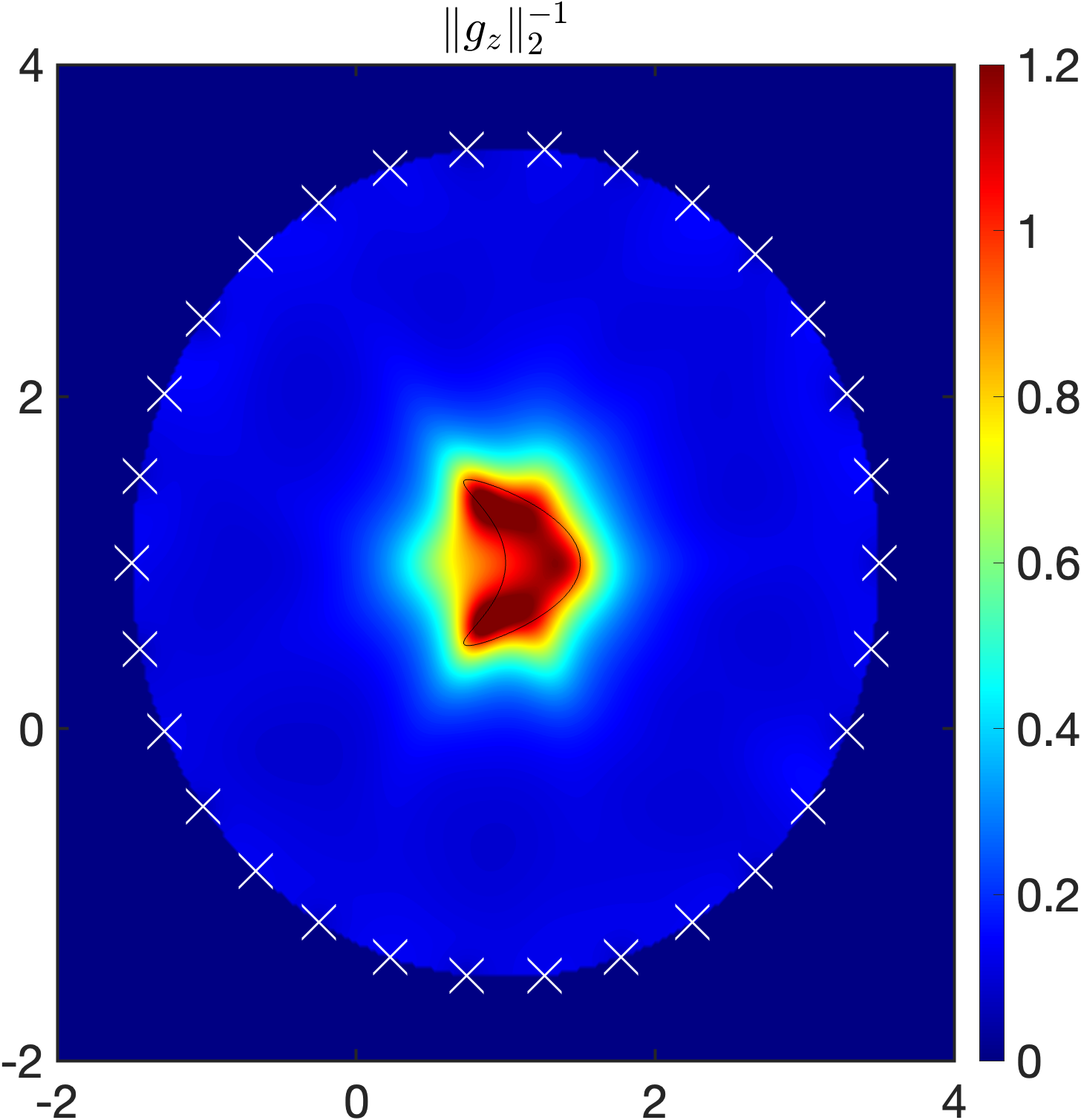}}
    \subfigure[$\beta=0.9$]{
    \includegraphics[width=0.3\linewidth, height=3.9cm]{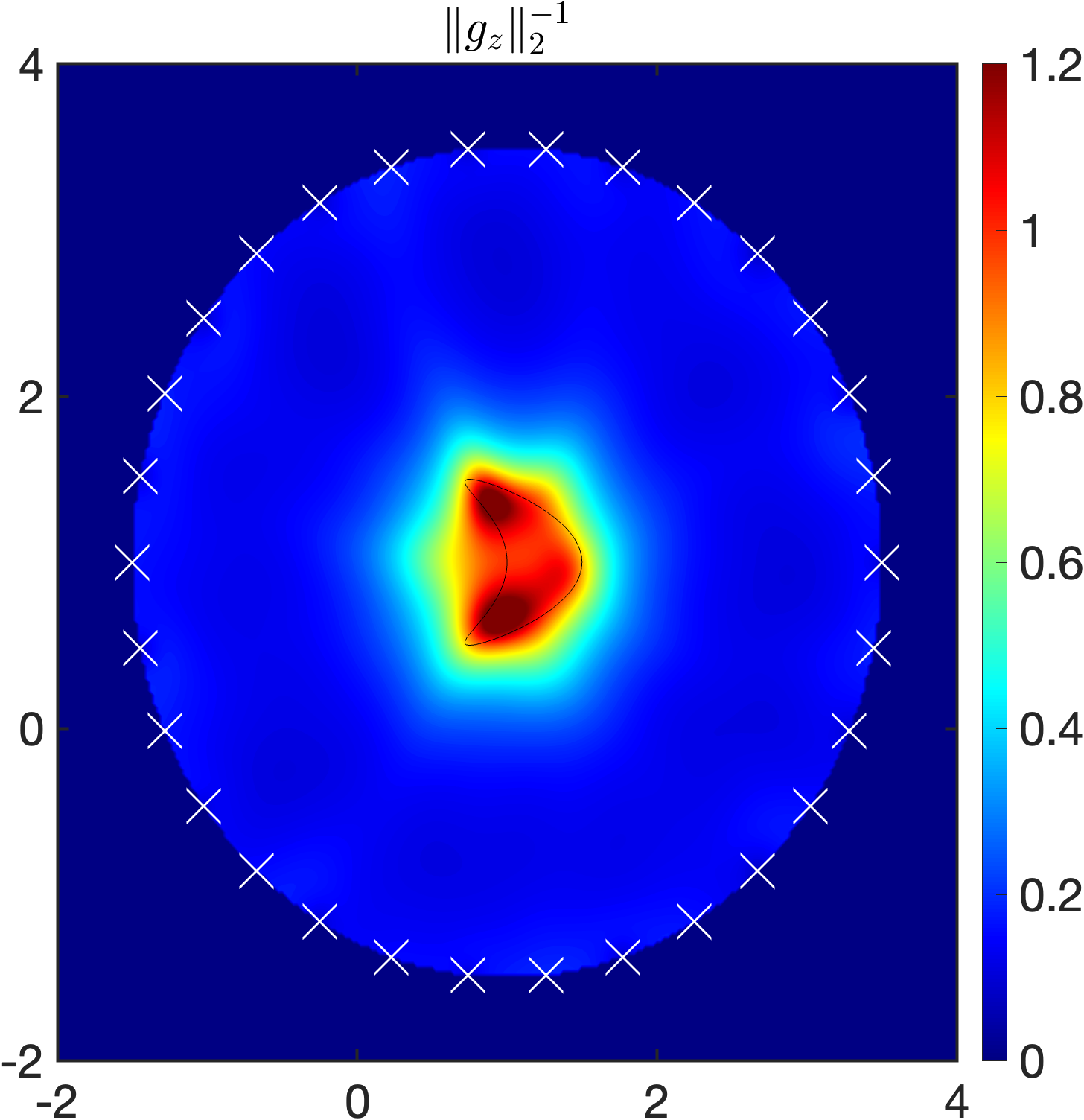}}
    \caption{{\color{lxl}Reconstructions for a kite-shaped obstacle by using indicator $I_C$ under different random levels $\beta=0.1,0.5,0.9$, respectively.}}
\end{figure}

{\color{lxl}{\bf Example 4.} In the fourth example, we consider different numbers of random point sources $z_l$. The obstacle is still assumed to be the kite-shaped one in {\bf Example 3}. Here we fix the random level $\beta=0.9$ and show the reconstructions with the number of random sources $L=80,140,200$ in Figure \ref{Figure 6}, respectively. The reconstruction is improved by using a larger number of random sources $L=140$ and $L=200$, as demonstrated in Figure \ref{Figure 6}.
\begin{figure}[htbp]\label{Figure 6}
    \centering
    \subfigure[$\beta=0.9,L=80$]{
    \includegraphics[width=0.3\linewidth, height=3.9cm]{figures_reconstruction/figure3_09.png}}
    \subfigure[$\beta=0.9,L=140$]{
    \includegraphics[width=0.3\linewidth, height=3.9cm]{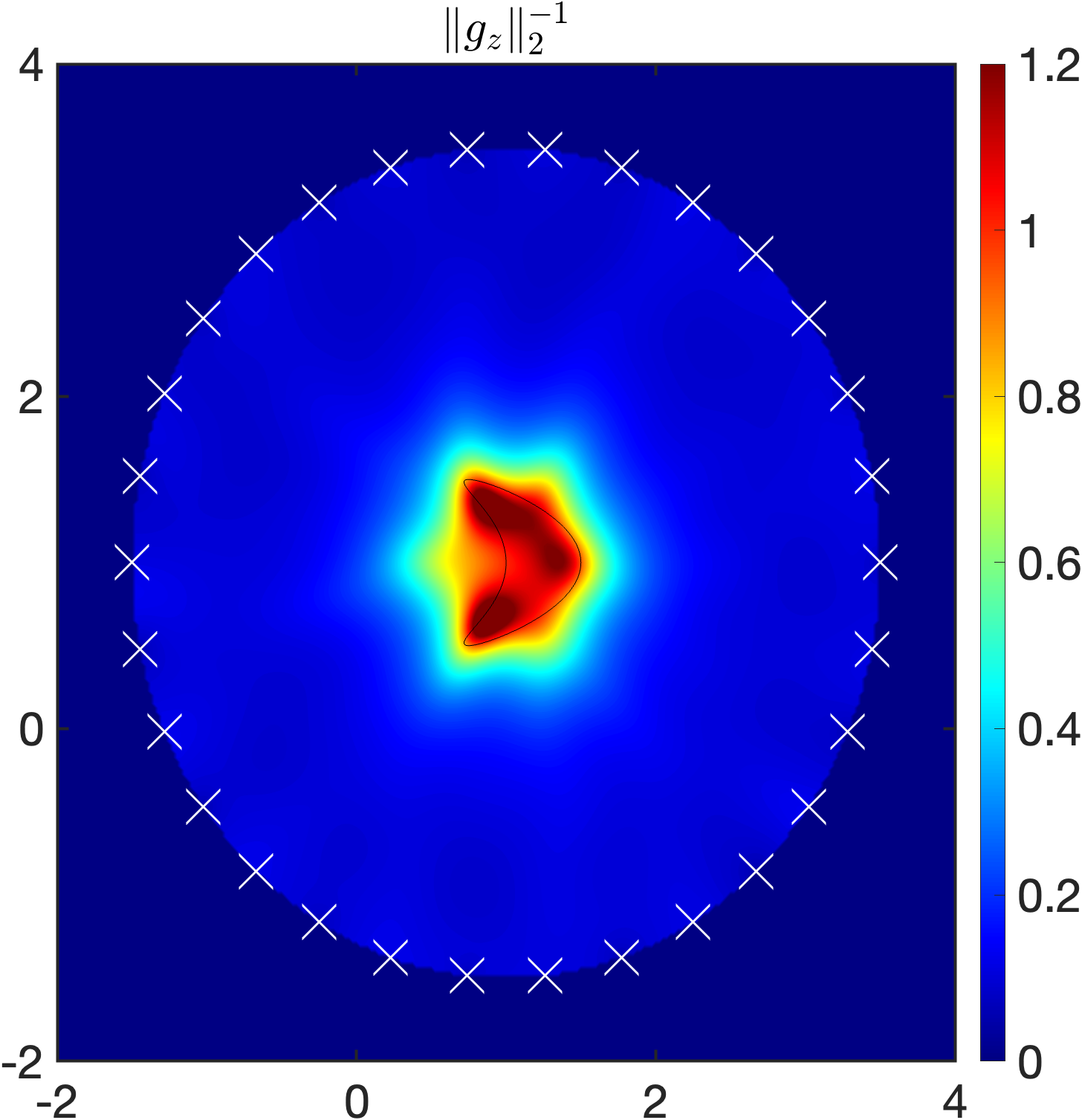}}
    \subfigure[$\beta=0.9,L=200$]{
    \includegraphics[width=0.3\linewidth, height=3.9cm]{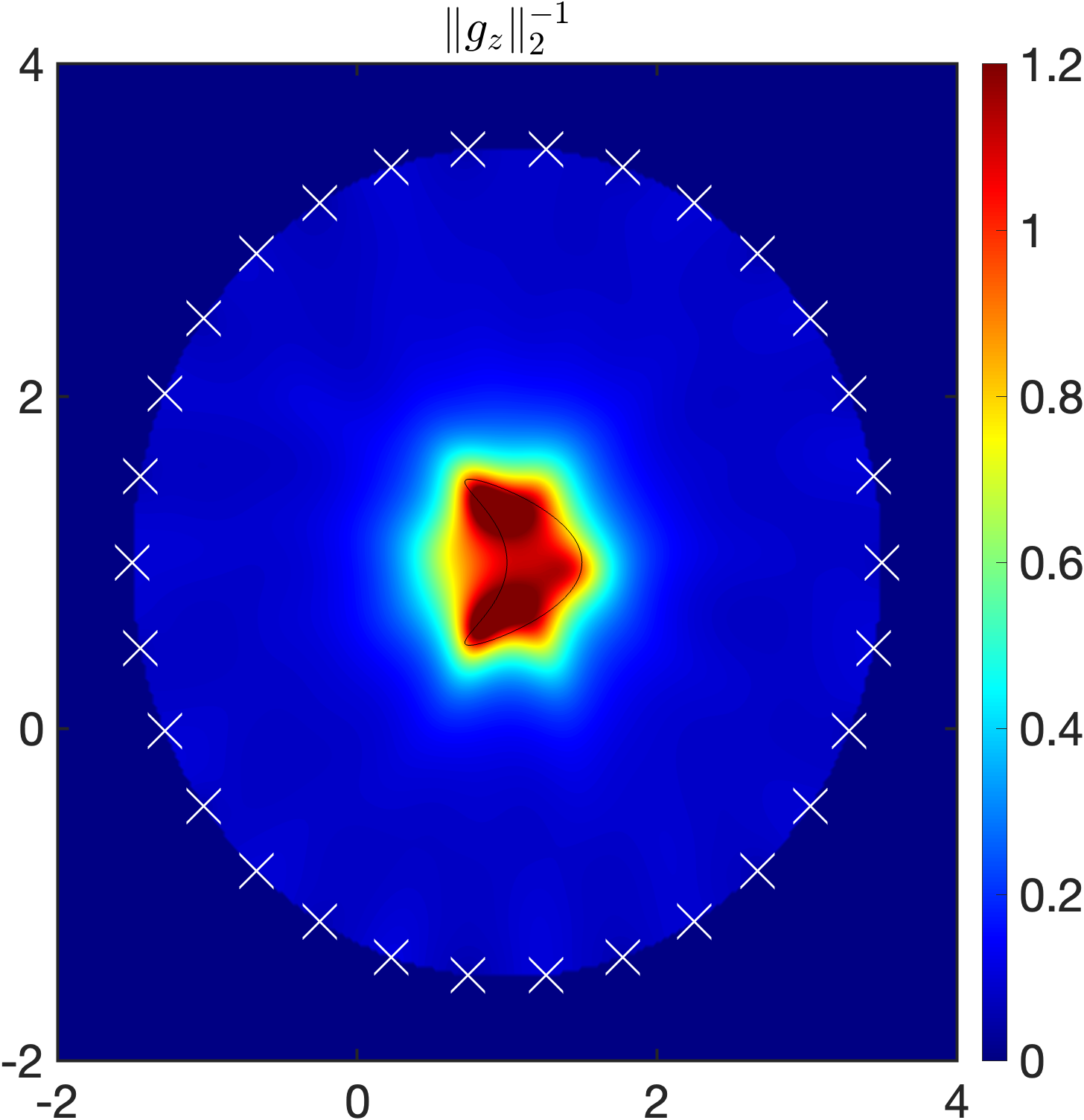}}
    \caption{\color{lxl}Reconstructions for a kite-shaped obstacle by using indicator $I_C$ under a random level $\beta=0.9$ with different numbers of random sources $L=80,140,200$, respectively.}
\end{figure}

{\bf Example 5.} In the last example, we consider limited-aperture measurements. The reconstructions are given in Figure \ref{Figure 7}. Compared with full-aperture case, limited-aperture measurement leads to poorer reconstructed results. This is expected for all numerical methods since it is a much harder inverse scattering problem. However, it can be seen that the reconstructed result using our method with passive data (see Figure \ref{Figure 7})(a)-(c)) gives comparable results to the results with active data (see Figure \ref{Figure 7}(d)-(i)).
\begin{figure}[htbp]\label{Figure 7}
    \centering
    \subfigure[$I_C$, full-aperture]{
    \includegraphics[width=0.3\linewidth, height=3.9cm]{figures_reconstruction/figure5_1.png}}
    \subfigure[$I_I$, full-aperture]{
    \includegraphics[width=0.3\linewidth, height=3.9cm]{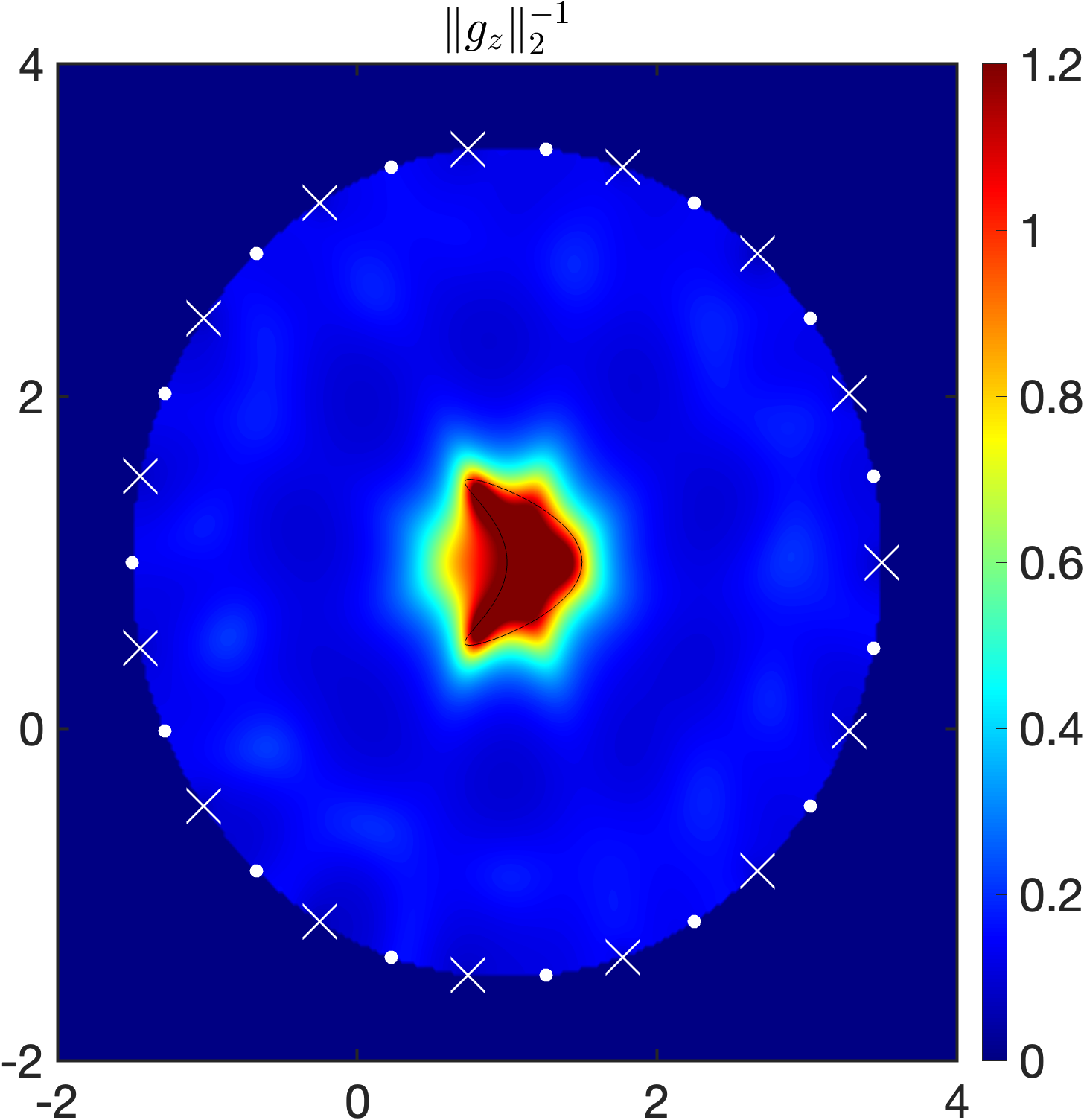}}
    \subfigure[$I_N$, full-aperture]{
    \includegraphics[width=0.3\linewidth, height=3.9cm]{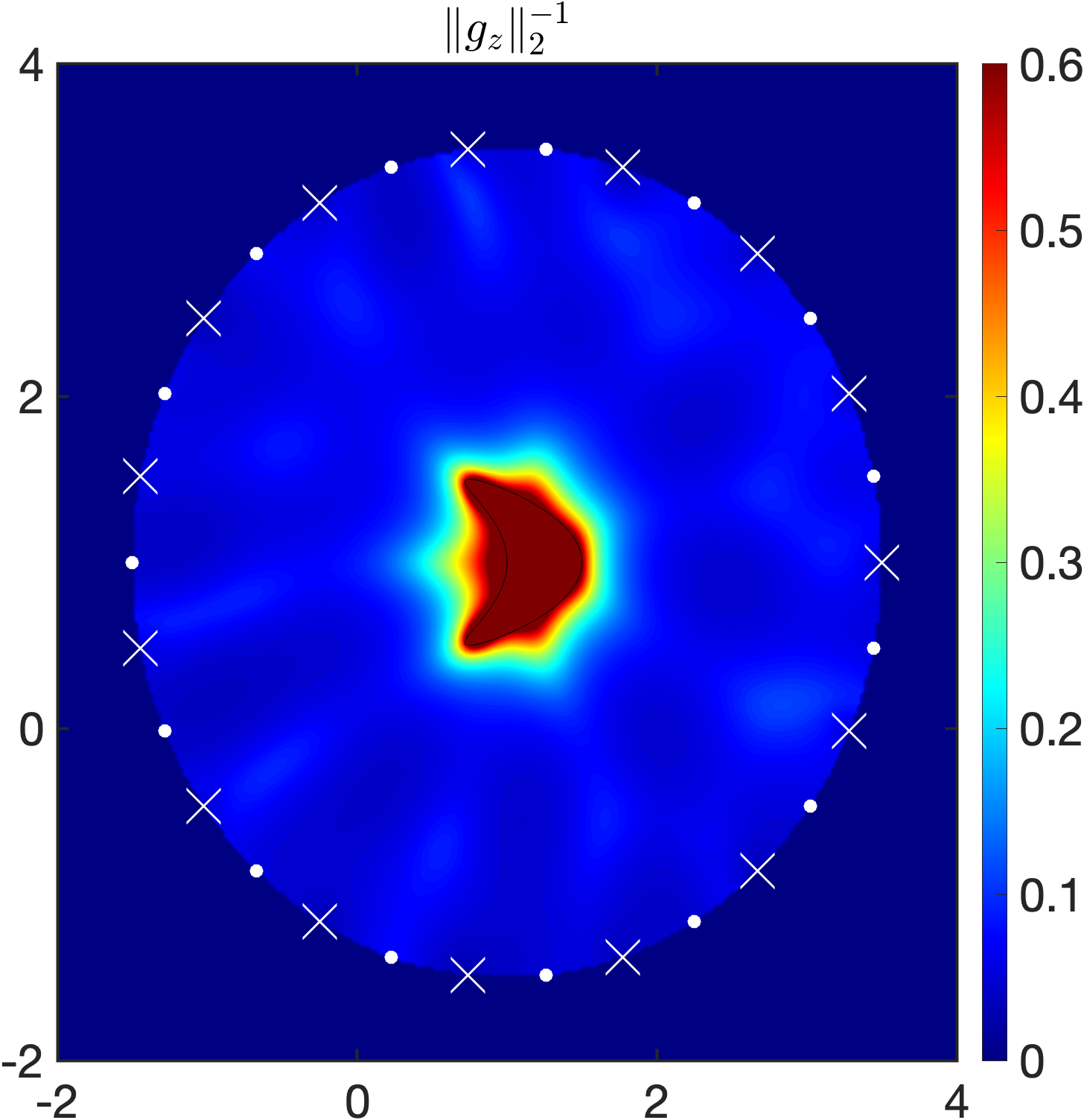}}
    
    \subfigure[$I_C$, $\left(-\frac{2\pi}{3},\frac{2\pi}{3}\right)$]{
    \includegraphics[width=0.3\linewidth, height=3.9cm]{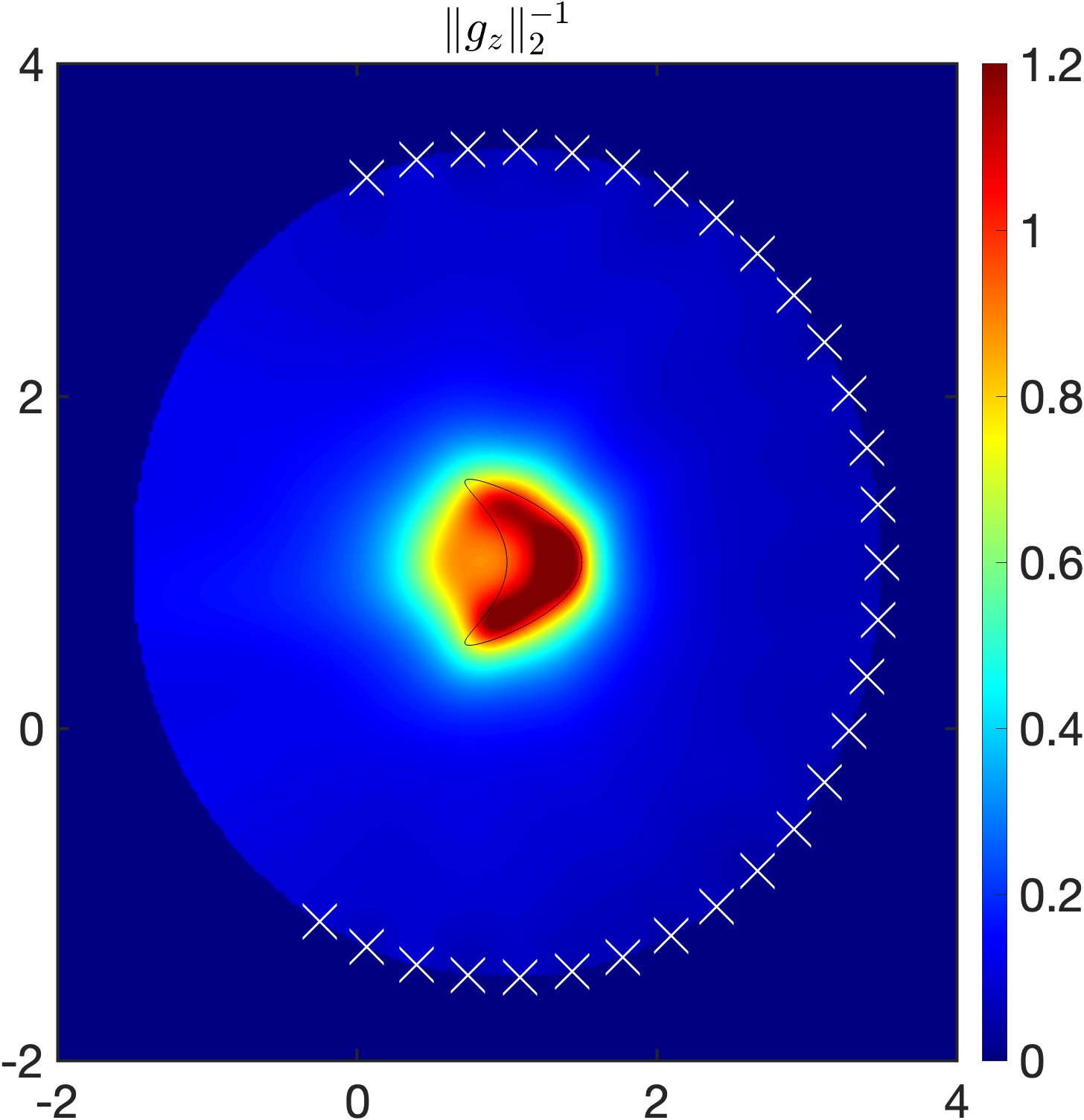}}
    \subfigure[$I_I$, $\left(-\frac{2\pi}{3},\frac{2\pi}{3}\right)$]{
    \includegraphics[width=0.3\linewidth, height=3.9cm]{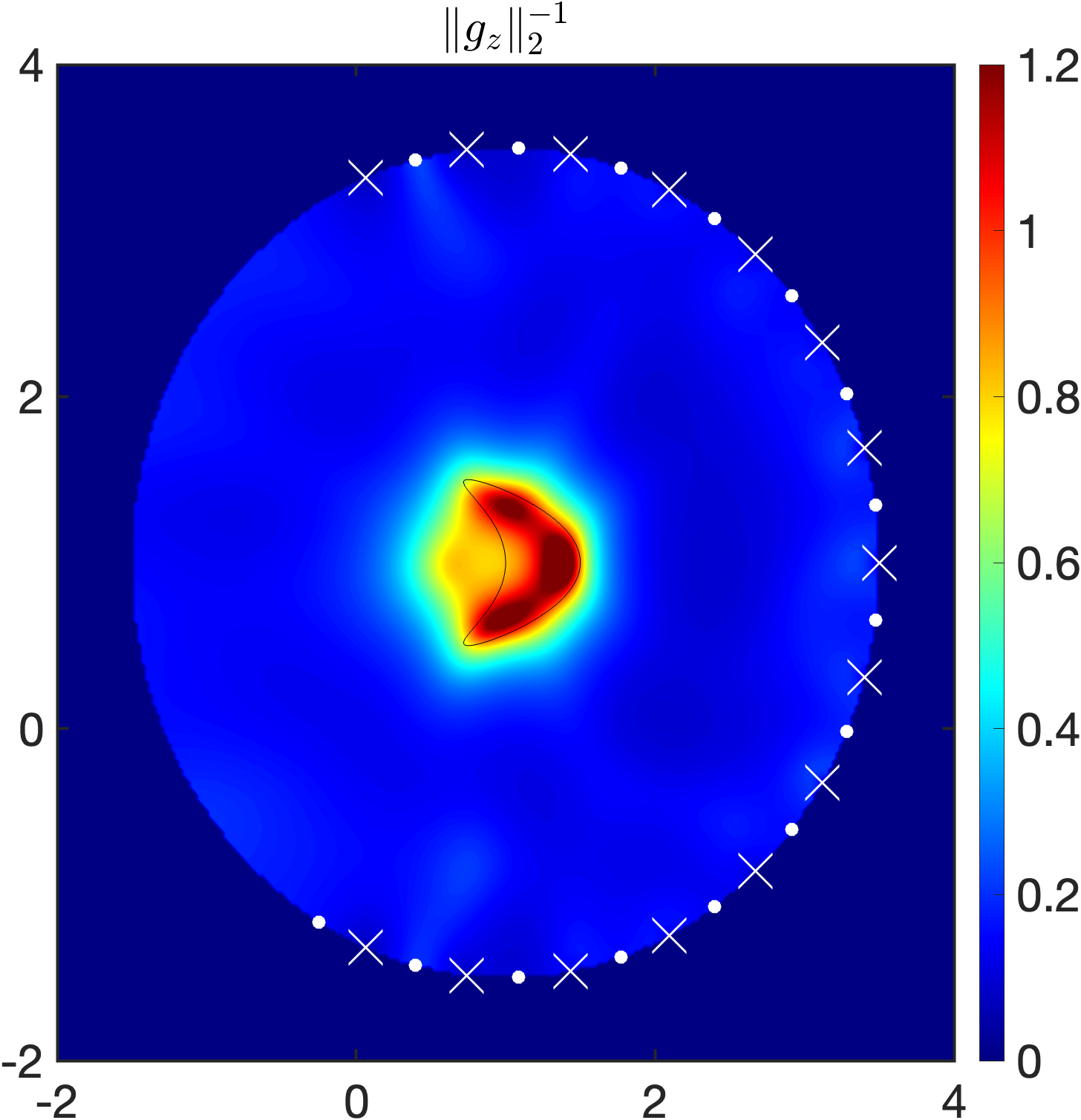}}
    \subfigure[$I_N$, $\left(-\frac{2\pi}{3},\frac{2\pi}{3}\right)$]{
    \includegraphics[width=0.3\linewidth, height=3.9cm]{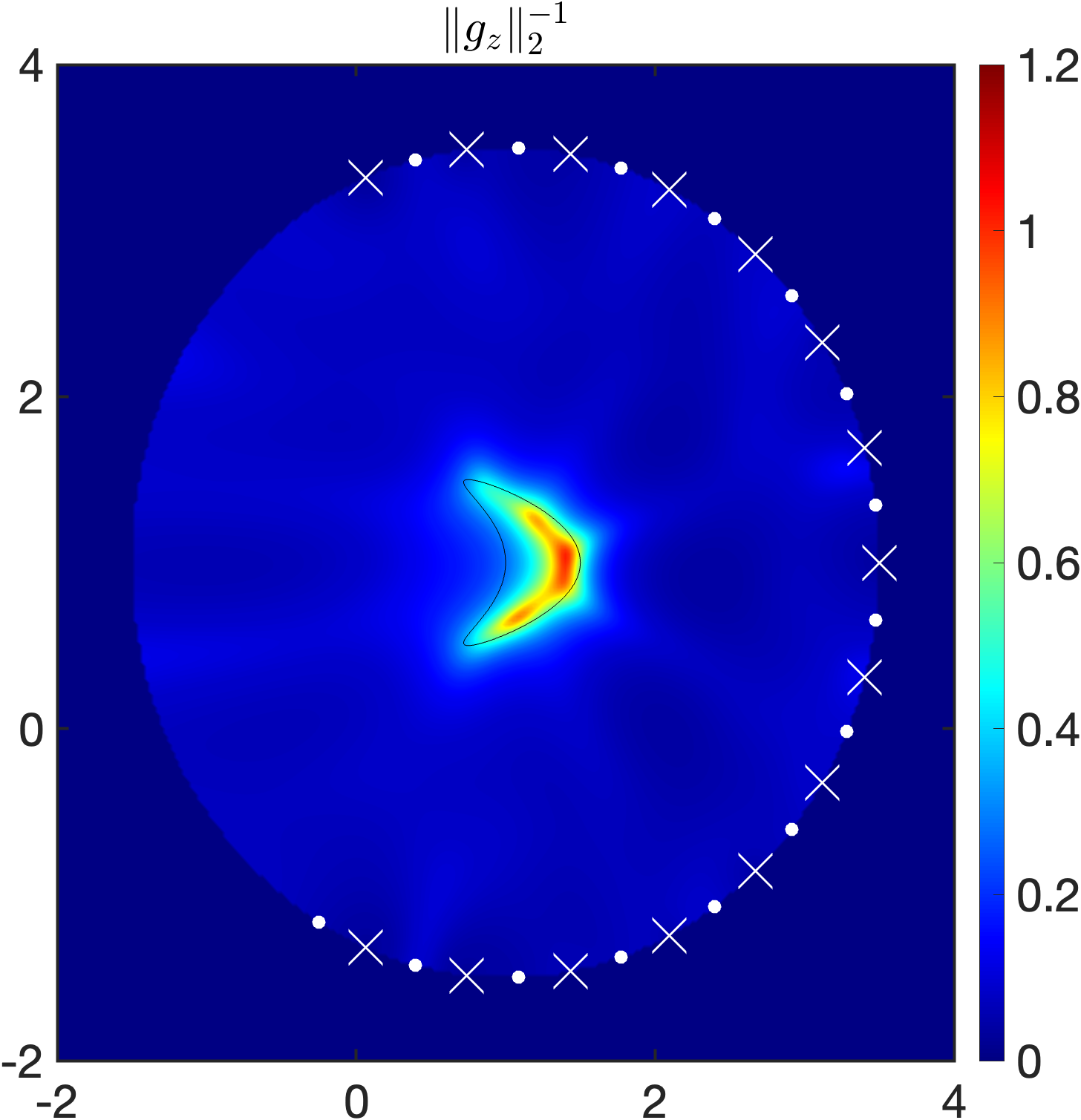}}
    
    \subfigure[$I_C$, $\left(-\frac{\pi}{3},\frac{\pi}{3}\right)$]{
    \includegraphics[width=0.3\linewidth, height=3.9cm]{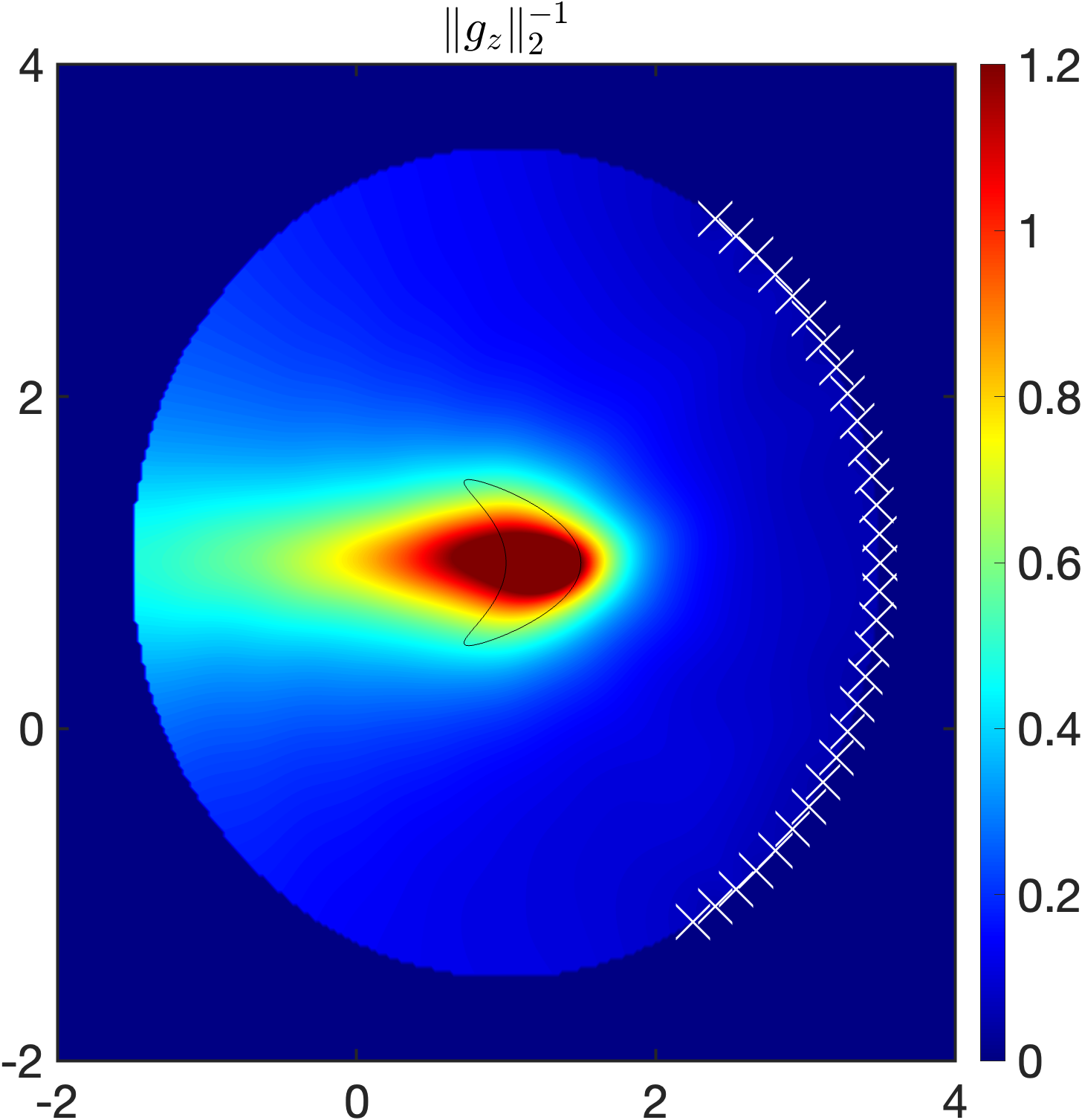}}
    \subfigure[$I_I$, $\left(-\frac{\pi}{3},\frac{\pi}{3}\right)$]{
    \includegraphics[width=0.3\linewidth, height=3.9cm]{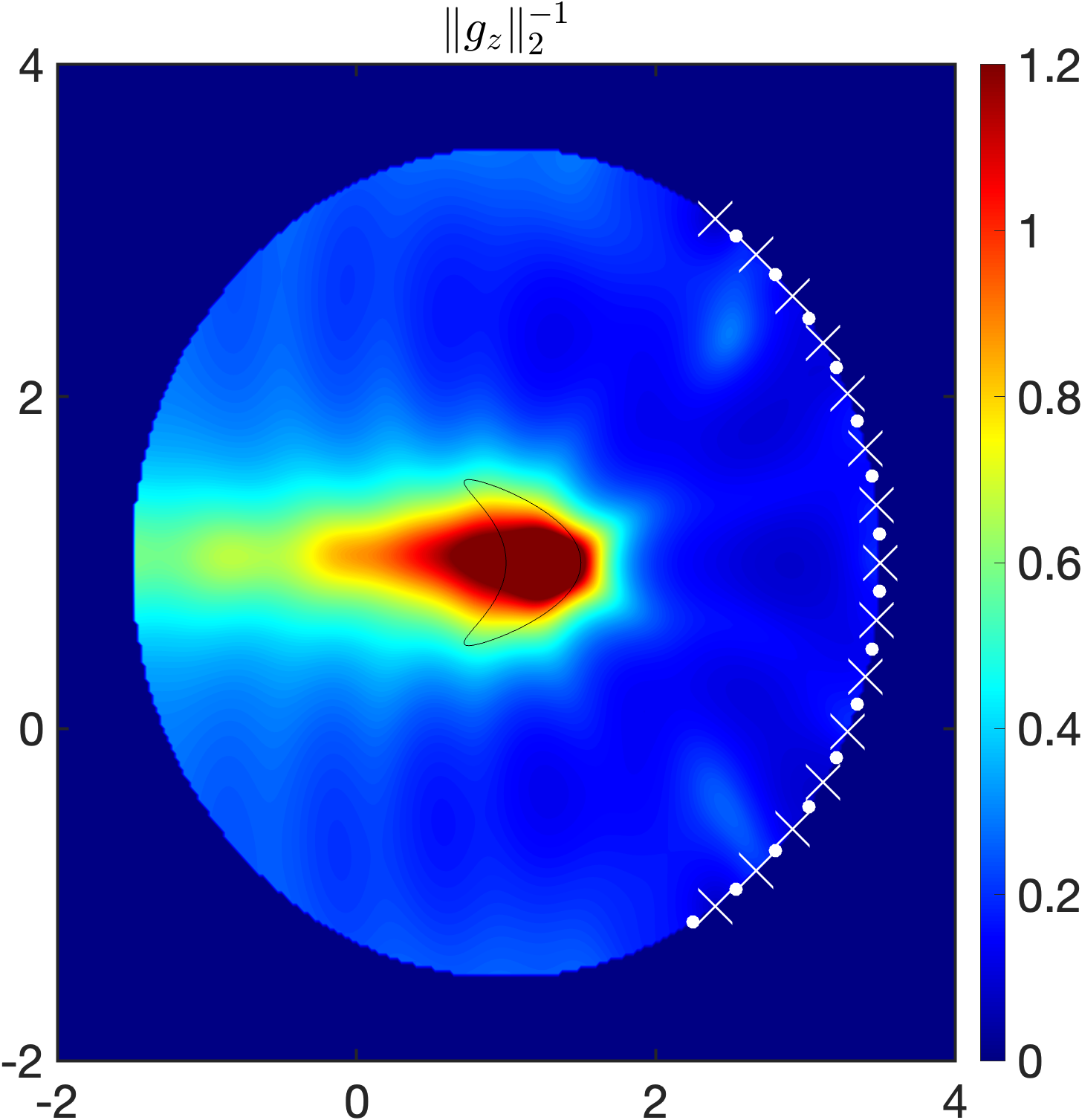}}
    \subfigure[$I_N$, $\left(-\frac{\pi}{3},\frac{\pi}{3}\right)$]{
    \includegraphics[width=0.3\linewidth, height=3.9cm]{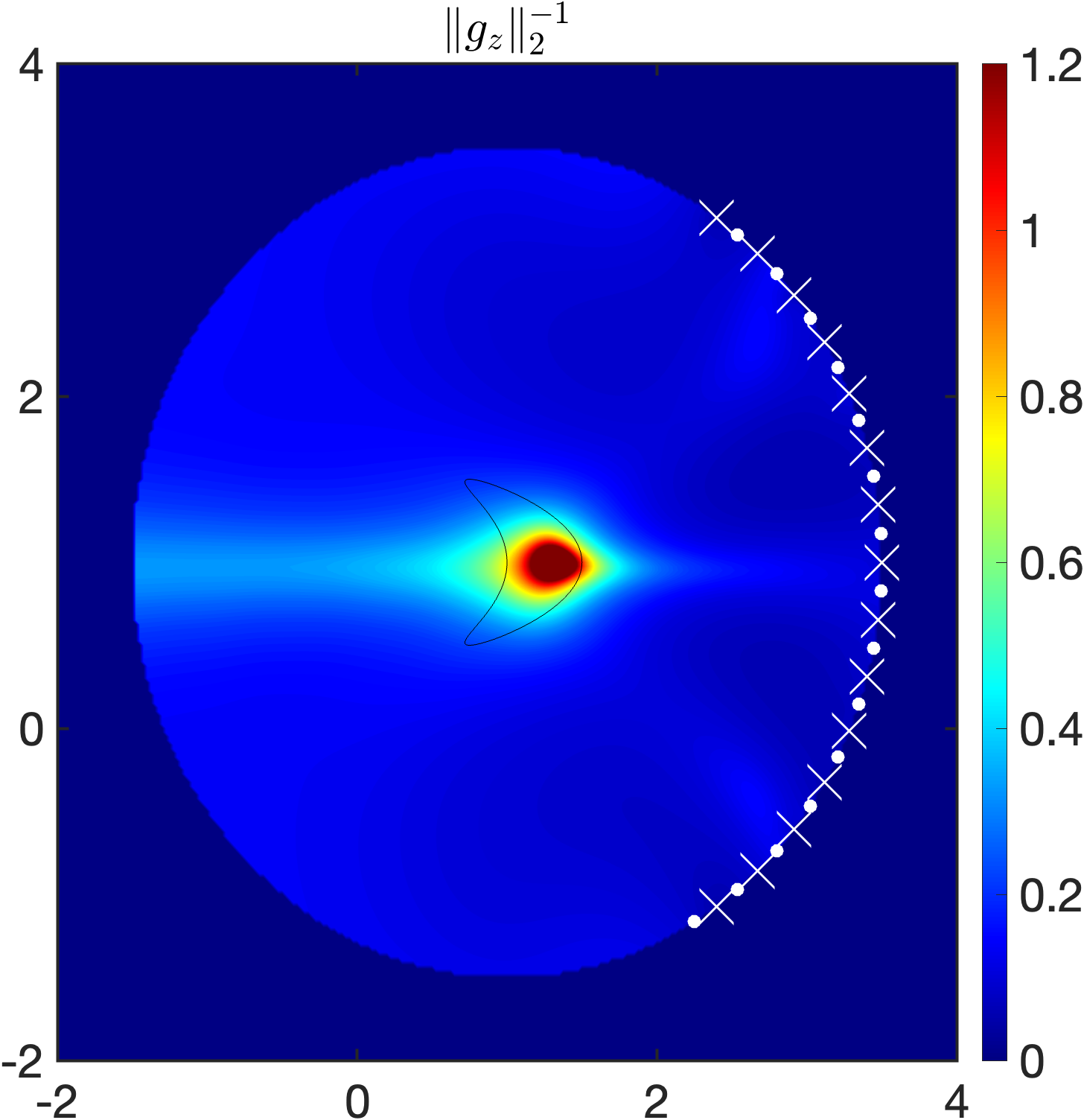}}
    \caption{\color{lxl}Reconstructions for a kite-shaped obstacle using indicators $I_C$ (the first column), $I_I$ (the second row) and $I_N$ (the third row) with limited-aperture measurements.}
\end{figure}}

\section{Conclusion}
In this work we investigate an inverse scattering problem due to random sources in the time domain. To make use of the measurement data set due to random sources, we propose an approximate data set motivated by the the Helmholtz-Kirchhoff identify in the frequency domain \cite{Garnier2023}. Particularly in our time domain case, the approximate data set involves the subtraction of two scattered wave fields, making the analysis much more complicated. Similar to \cite{CakoniHaddarLechleiter2019}, we propose to use several approximate operators to tackle these difficulties and propose a linear sampling method to reconstruct the shape of the scattering object. One key ingredient is an efficient functional framework that relates the mapping properties of Laplace domain factorized operators to their counterparts in the time domain. The proposed imaging method directly makes use of the time domain measurement due to random sources, avoiding certain difficulties in using frequency domain measurements at multiple frequencies \cite{guzina2010a}.  The capability of the proposed method is further demonstrated by numerical examples.

{\color{TJL}There are many ways in which this work could be profitably continued. We expect the similar conclusions are valid for other boundary conditions, such as Neumann and Robin boundary conditions. One could also try and extend our theoretical analysis to two-dimensional case.}
 
\appendix

\section{An approximate identity in the Laplace domain}
Let $B_R$ be a large ball such that $x,y \in B_R$. Note that both $\hat\Phi_s(y;p)$ and $\hat\Phi_s(y;q)$  are fundamental solutions, application of Green's identity yields that 
\begin{equation*}
\begin{aligned}
&\quad\int_{\partial B_R}  \bigg(\frac{\partial \overline{\hat\Phi_s(y;q)}}{\partial \nu(y)}\hat\Phi_s(y;p)   - \frac{\partial \hat\Phi_s(y;p) }{{\color{lxl}\partial\nu(y)}} \overline{\hat\Phi_s(y;q)} \bigg)ds(y)\\
&= -\hat\Phi_s(q;p)+ \overline{\hat\Phi_s(p;q)}+(s^2-\overline{s}^2)\int_{ B_R}\hat\Phi_s(y;p)\overline{\hat\Phi_s(y;q)}dy.
\end{aligned}
\end{equation*}
Note that $B_R$ is a large ball, from the asymptotic behavior of the fundamental solution, similar to \cite{Garnier2023,GarnierPapanicolaou2016} it follows that
\ben
&&-(s+\overline{s})i\int_{\partial B_R}\hat\Phi_s(y;p)\overline{\hat\Phi_s(y;q)}ds(y)\\
&\approx &-\hat\Phi_s(q;p)+ \overline{\hat\Phi_s(p;q)}+(s^2-\overline{s}^2)\int_{B_R}\hat\Phi_s(y;p)\overline{\hat\Phi_s(y;q)}dy.
\enn
Choose a sufficiently large radius $\tilde{R}$ such that  the ball $B_{\tilde{R}}(p)$ contains $B_R$. We can estimate the $L^2$-norm of $\hat\Phi_s(\cdot;p)$ in $B_{\tilde{R}}(p)$ by $\left\Vert\hat\Phi_s(\cdot;p)\right\Vert_{L^2(B_{\tilde{R}}(p))}^2
\le \frac{1}{16\pi^2}\int_{B_{\tilde{R}}(p)}\frac{e^{-2\sigma|p-y|}}{|p-y|^2}dy$
which remains bounded as $\sigma \to 0$.
The $L^2$-norm of $\overline{\hat\Phi_s(\cdot;q)}$ can be estimated similarly. This allows to show that $
(s^2-\overline{s}^2)\int_{B_R}\hat\Phi_s(y;p)\overline{\hat\Phi_s(y;q)}dy$ approaches to $0$ as $\sigma \to 0$. Finally, for small $\sigma$, it allows to conclude that
\ben
-(s+\overline{s})i\int_{\partial B_R}\hat\Phi_s(y;p)\overline{\hat\Phi_s(y;q)}ds(y) \approx -\hat\Phi_s(q;p)+ \overline{\hat\Phi_s(p;q)}.
\enn

\end{document}